\newtheorem{lemma}{\bf Lemma}
\newtheorem{theorem}{\bf Theorem}
\newtheorem{remark}{\bf Remark}
\journal{ArXiv}
\renewenvironment{proof}{{\noindent \em Proof.}}{}
\theoremstyle{definition}
\date{}
\begin{document}

\begin{frontmatter}

\title{
Implicit integration of nonlinear evolution equations on tensor manifolds
\thanks{
This research was supported by the U.S. Army Research Office 
grant W911NF1810309, and by the U.S. Air Force 
Office of Scientific Research grant FA9550-20-1-0174.
}}

\author[ucsc]{Abram Rodgers}
\ead{akrodger@ucsc.edu}
\author[ucsc]{Daniele Venturi\corref{correspondingAuthor}}
\address[ucsc]{Department of Applied Mathematics\\
University of California Santa Cruz\\ Santa Cruz, CA 95064}
\cortext[correspondingAuthor]{Corresponding author}
\ead{venturi@ucsc.edu}
\begin{abstract}
Explicit step-truncation tensor methods have recently 
proven successful in integrating initial 
value problems for high-dimensional partial 
differential equations (PDEs). 
However, the combination of non-linearity and 
stiffness may introduce time-step restrictions which 
could make explicit integration computationally 
infeasible.
To overcome this problem, we develop a new class of 
implicit rank-adaptive algorithms for temporal 
integration of nonlinear evolution equations on tensor manifolds.
These algorithms are based on performing 
one time step with a conventional time-stepping 
scheme, followed by an implicit fixed point 
iteration step involving a rank-adaptive truncation 
operation onto a tensor manifold.
Implicit step truncation methods are straightforward 
to implement as they rely only on arithmetic operations 
between tensors, which can be performed by 
efficient and scalable parallel algorithms.
Numerical applications demonstrating the effectiveness 
of implicit step-truncation tensor integrators are 
presented and discussed for the Allen-Cahn equation,
the Fokker-Planck equation, and the nonlinear
Schr\"odinger equation.
\end{abstract}

\end{frontmatter}


\noindent
\section*{Introduction}
High-dimensional nonlinear evolution equations  
of the form 
\begin{align}
\frac{\partial f({\bm x},t) }{\partial t} = 
{\cal N}\left(f({\bm x},t),{\bm x}\right), \qquad 
f({\bm x},0) = f_0({\bm x}),
\label{nonlinear-ibvp} 
\end{align}
arise in many areas of mathematical physics, e.g., 
in statistical mechanics \cite{Beran,risken1989}, 
quantum field theory \cite{Justin}, and 
in the approximation of functional differential equations 
(infinite-dimensional PDEs) \cite{VenturiSpectral,venturi2018} 
such as the Hopf equation of turbulence \cite{Hopf}, 
or functional equations modeling deep learning \cite{Weinan2019}.
In equation \eqref{nonlinear-ibvp}, $f:  \Omega \times [0,T] \to\mathbb{R}$ 
is a $d$-dimensional time-dependent scalar field 
defined on the domain $\Omega\subseteq \mathbb{R}^d$ 
($d\geq 2$), $T$ is the period of integration, 
and $\cal N$ is a nonlinear operator which may depend 
on the variables ${\bm x}=(x_1,\ldots,x_d)\in \Omega$, and may 
incorporate boundary conditions.
For simplicity, we assume that the domain $\Omega$ 
is a Cartesian product of $d$ one-dimensional 
domains $\Omega_i$
\begin{equation}
\Omega=\Omega_1\times\cdots\times \Omega_d,
\end{equation}
and that $f$ is an element of a Hilbert 
space $H(\Omega;[0,T])$. In these hypotheses, we can leverage 
the isomorphism  $H(\Omega;[0,T])\simeq H([0,T])\otimes H(\Omega_1)\otimes \cdots \otimes H(\Omega_d)$ and represent the 
solution of \eqref{nonlinear-ibvp} as 
\begin{equation}
f(\bm x,t) \approx \sum_{i_1=1}^{n_1}\cdots
\sum_{i_d=1}^{n_d}f_{i_1\ldots i_d}(t) 
\phi_{i_1}(x_1)\cdots \phi_{i_1}(x_1),
\label{tt}
\end{equation}
where $\phi_{i_j}(x_j)$ are one-dimensional 
orthonormal basis functions of $H(\Omega_i)$. 
Substituting \eqref{tt} into \eqref{nonlinear-ibvp} and  
projecting onto an appropriate finite-dimensional subspace 
of $H(\Omega)$ yields the semi-discrete form 
\begin{equation}
\label{eqn:ode}
\frac{d{\bm f}}{d t}
=
{\bm G}({\bm f}),\qquad {\bm f}(0) = {\bm f}_0
\end{equation}
where 
${\bm f}:[0,T]\rightarrow
{\mathbb R}^{n_1\times n_2\times \dots \times n_d}$ is 
a multivariate array with coefficients 
$f_{i_1\ldots i_d}(t)$, and $\bm G$ 
is the finite-dimensional representation of the 
nonlinear operator $\cal N$.
The number of degrees of freedom associated 
with the solution to the Cauchy problem \eqref{eqn:ode} 
is $N_{\text{dof}}=n_1 n_2 \cdots n_d$ at each 
time $t\geq 0$, which can 
be extremely large even for 
moderately small dimension $d$. For instance, 
the solution of the Boltzmann-BGK equation
on a six-dimensional ($d=6$) flat torus \cite{Jingmei2022,BoltzmannBGK2020,dimarco2014}  
with $n_i=128$ basis functions in each position and momentum variable  
yields $N_{\text{dof}}=128^6=4398046511104$ degrees
of freedom at each time $t$.
This requires approximately $35.18$ Terabytes 
per temporal snapshot if we store the solution 
tensor $\bm f$ in a double precision 
IEEE 754 floating point format.
Several general-purpose algorithms have been developed 
to mitigate such an exponential growth of degrees of freedom, 
the computational cost, and the memory requirements. 
These algorithms include, e.g., sparse 
collocation methods  \cite{Bungartz,Barthelmann,Foo1,Akil}, 
high-dimensional model representation (HDMR)
\cite{Li1,CaoCG09,Baldeaux}, and techniques based on
deep neural networks \cite{Raissi,Raissi1,Zhu2019,chen2020deephf}.

In a parallel research effort that has its roots in quantum field 
theory and quantum entanglement, researchers have recently developed a new 
generation of algorithms based on tensor networks and low-rank 
tensor techniques to compute the solution of high-dimensional PDEs \cite{khoromskij,Bachmayr,parr_tensor,cho2016,Vandereycken_2019}. 
Tensor networks are essentially factorizations of entangled 
objects such as multivariate functions or operators, into 
networks of simpler objects which are amenable to efficient 
representation and computation. The process of building a 
tensor network relies on a hierarchical decomposition that can be visualized in terms of trees, and has its roots in the spectral theory for linear operators. Such rigorous mathematical foundations can be leveraged to construct high-order methods to compute the numerical solution of high-dimensional Cauchy problems of the form \eqref{eqn:ode} at a cost that scales linearly with respect to the dimension $d$, and polynomially with respect to the tensor rank.

In particular, a new class of algorithms to integrate \eqref{eqn:ode} on 
a low-rank tensor manifold was recently proposed in \cite{rodgers2020stability,Alec2020,rodgers2020adaptive,dektor2020dynamically,Vandereycken_2019,venturi2018}.
These algorithms are known as explicit step-truncation methods 
and they are based on integrating the solution ${\bm f}(t)$ off the 
tensor manifold for a short time using any conventional explicit 
time-stepping scheme, and then mapping it back onto the manifold 
using a tensor truncation operation (see Figure \ref{fig:intro-surf-compare}).  
To briefly describe these methods, let us 
discretize the ODE \eqref{eqn:ode} in time with a 
one-step method on an evenly-spaced temporal grid as 
\begin{equation}
\label{eqn:discrete-flow-intro}
{\bm f}_{k+1} = 
{\bm \Psi}_{\Delta t}({\bm G}, {\bm f}_{k}),
\qquad {\bm f}_{0}={\bm f}(0),
\end{equation}
where ${\bm f}_{k}$ denotes an approximation of 
${\bm f}(k\Delta t)$ for $k=0,1,\ldots$, 
and ${\bm \Psi}_{\Delta t}$ is an increment function.
To obtain a step-truncation integrator, we simply apply 
a truncation operator $\mathfrak{T}_{\bm r}(\cdot)$, i.e., 
a nonlinear projection onto a tensor manifold ${\cal H}_{\bm r}$
with multilinear rank $\bm r$ \cite{uschmajew2013geometry} 
to the scheme \eqref{eqn:discrete-flow-intro}. This yields 
\begin{equation}
\label{eqn:discrete-flow-trunc-intro}
{\bm f}_{k+1} = 
{\mathfrak T}_{\bm r}\left(
{\bm \Psi}_{\Delta t}({\bm G}, {\bm f}_{k})\right).
\end{equation}
The need for tensor rank-reduction when iterating 
\eqref{eqn:discrete-flow-intro} can be easily understood 
by noting that tensor operations such as the application of 
an operator to a tensor and the addition between 
two tensors naturally increase tensor rank
\cite{kressner2014algorithm}. 
Hence, iterating \eqref{eqn:discrete-flow-trunc-intro} with no rank 
reduction can yield a fast increase 
in tensor rank, which, in turn, can tax 
computational resources heavily.

Explicit step-truncation algorithms of the 
form \eqref{eqn:discrete-flow-trunc-intro} 
were studied extensively in 
\cite{rodgers2020adaptive,Vandereycken_2019}. In particular, 
error estimates and convergence results were obtained for 
both fixed-rank and rank-adaptive integrators, i.e., 
integrators in which the tensor rank $\bm r$
is selected at each time step based on accuracy
and stability constraints. Step-truncation methods 
are very simple to implement as they rely only 
on arithmetic operations between tensors, 
which can be performed by scalable parallel algorithms 
\cite{daas2020parallel,townsend,AuBaKo16,grasedyck2018distributed}.

While explicit step-truncation methods have proven 
successful in integrating a wide variety of 
high-dimensional initial value problems, their 
effectiveness for {stiff problems} 
is limited. Indeed, the combination of 
non-linearity and stiffness may introduce 
time-step restrictions which could make explicit 
step-truncation integration computationally infeasible. 
As an example, in Figure \ref{fig:intro-stiff-compare} 
we show that the explicit step-truncation midpoint 
method applied to the Allen-Cahn equation
\begin{equation}
\label{eqn:allen-cahn-intro}
\frac{\partial f}{\partial t}
= \varepsilon \Delta f +f - f^3, 
\end{equation}
undergoes a numerical instability for 
$\Delta t=10^{-3}$. 
\begin{figure}[t!]
\centerline{\hspace{0.9cm}
\footnotesize 
Explicit step-truncation midpoint method \hspace{1.9cm}
Implicit step-truncation midpoint method\hspace{0.7cm}}
\begin{center}
\includegraphics[scale=0.51]{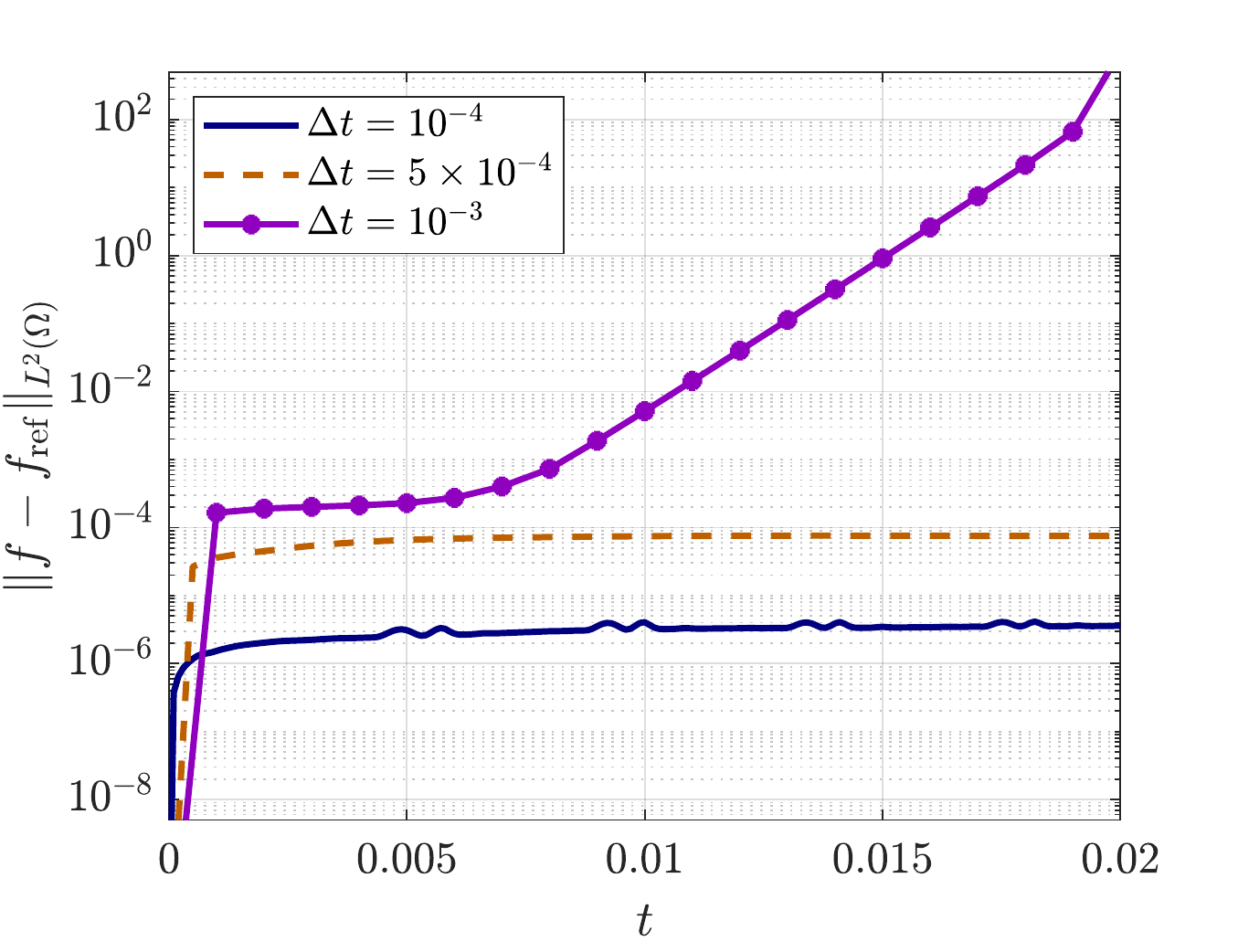}\hspace{0.5cm}
\includegraphics[scale=0.51]{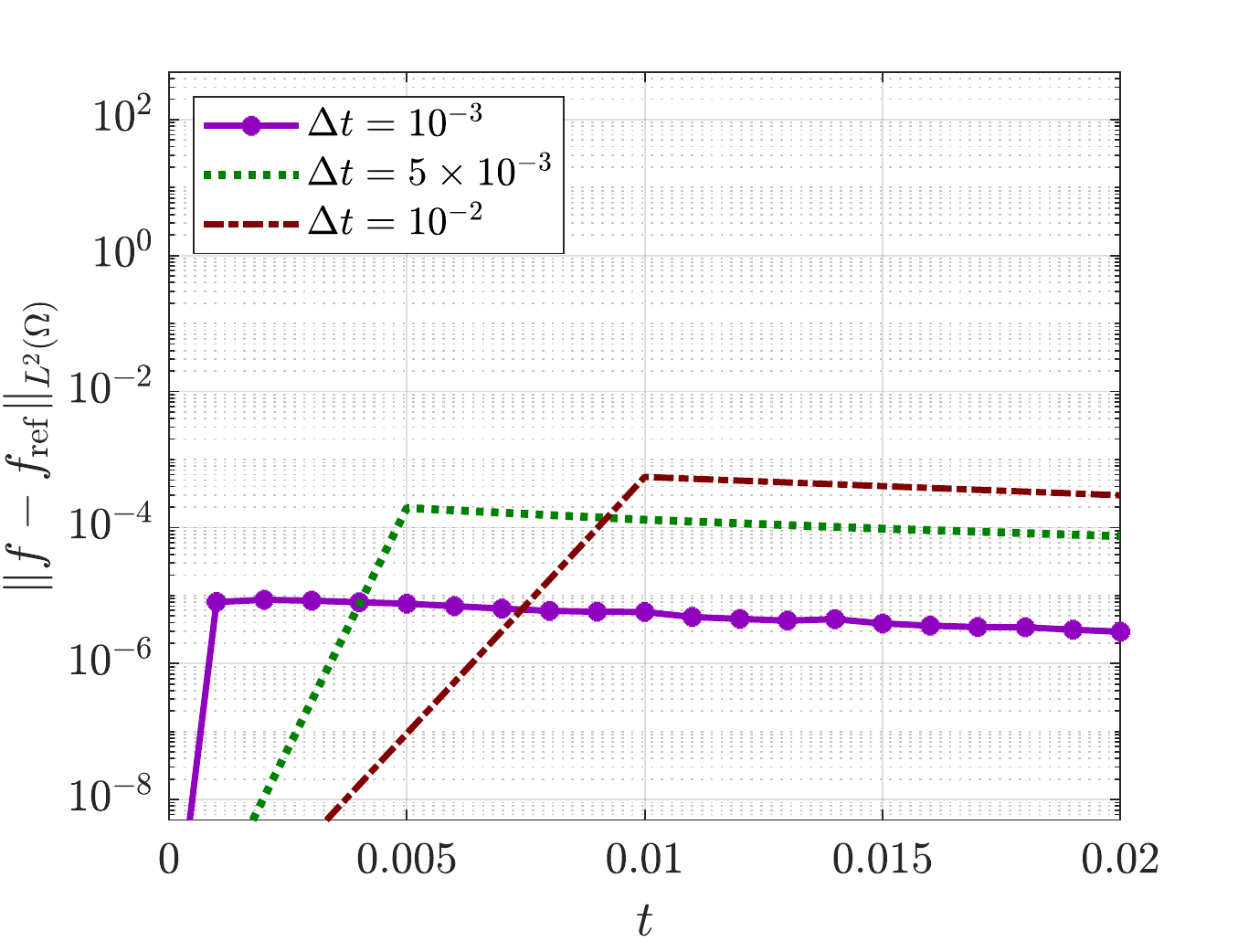}
\end{center}
\caption{
Explicit and implicit step-truncation midpoint methods 
applied the Allen-Cahn equation \eqref{eqn:allen-cahn-intro}
with $\varepsilon = 0.1$.
It is seen that the explicit step-truncation midpoint method undergoes a numerical instability for $\Delta t=10^{-3}$ while the implicit step-truncation midpoint method retains accuracy and stability for $\Delta t=10^{-3}$, and even larger time steps. Stability implicit step-truncation midpoint is studied in section \ref{sec:lin-stability}.}
\label{fig:intro-stiff-compare}
\end{figure}

The main objective of this paper is to develop 
a new class of rank-adaptive {\em implicit} step-truncation 
algorithms to integrate high-dimensional 
initial value problems of the form \eqref{eqn:ode} on 
low-rank tensor manifolds. The main idea of these new 
integrators is illustrated in Figure \ref{fig:intro-surf-compare}.
Roughly speaking, implicit step-truncation method 
take $\bm f_k\in {\cal H}_{\bm r}$ (${\cal H}_{\bm r}$ is a HT 
or TT tensor manifold with multilinear rank $\bm r$) 
and $\bm \Psi_{\Delta t}(\bm G,\bm f_k)$ 
as input and generate a sequence of inexact Newton
iterates ${\bm f}^{[j]}$ converging to a point 
tensor manifold ${\cal H}_{\bm s}$. Once ${\bm f}^{[j]}$ is  
sufficiently close to ${\cal H}_{\bm s}$ we project it 
onto the manifold via a standard truncation operation. 
This operation is also known as ``compression step'' 
in the HT/TT-GMRES algorithm described in \cite{dolgov2013ttgmres}.
Of course the computational cost of 
implicit step-truncation methods is higher 
than that of explicit step truncation methods for 
one single step. However, implicit methods 
allow to integrate stably with larger time-steps 
while retaining accuracy.
Previous research on implicit tensor
integration leveraged the Alternating Least Squares (ALS) 
algorithm \cite{parr_tensor,dolgov2012fast,cho2016},
which essentially attempts to solve an optimization
problem on a low-rank tensor manifold to compute the 
solution of \eqref{eqn:ode} at each time step.
As is well-known, ALS is equivalent to the (linear) 
block Gauss-Seidel iteration applied to the Hessian matrix 
of the residual, and can have convergence 
issues \cite{uschmajew2012}.

\begin{figure}[t]
\begin{center}
\includegraphics[scale=0.5]{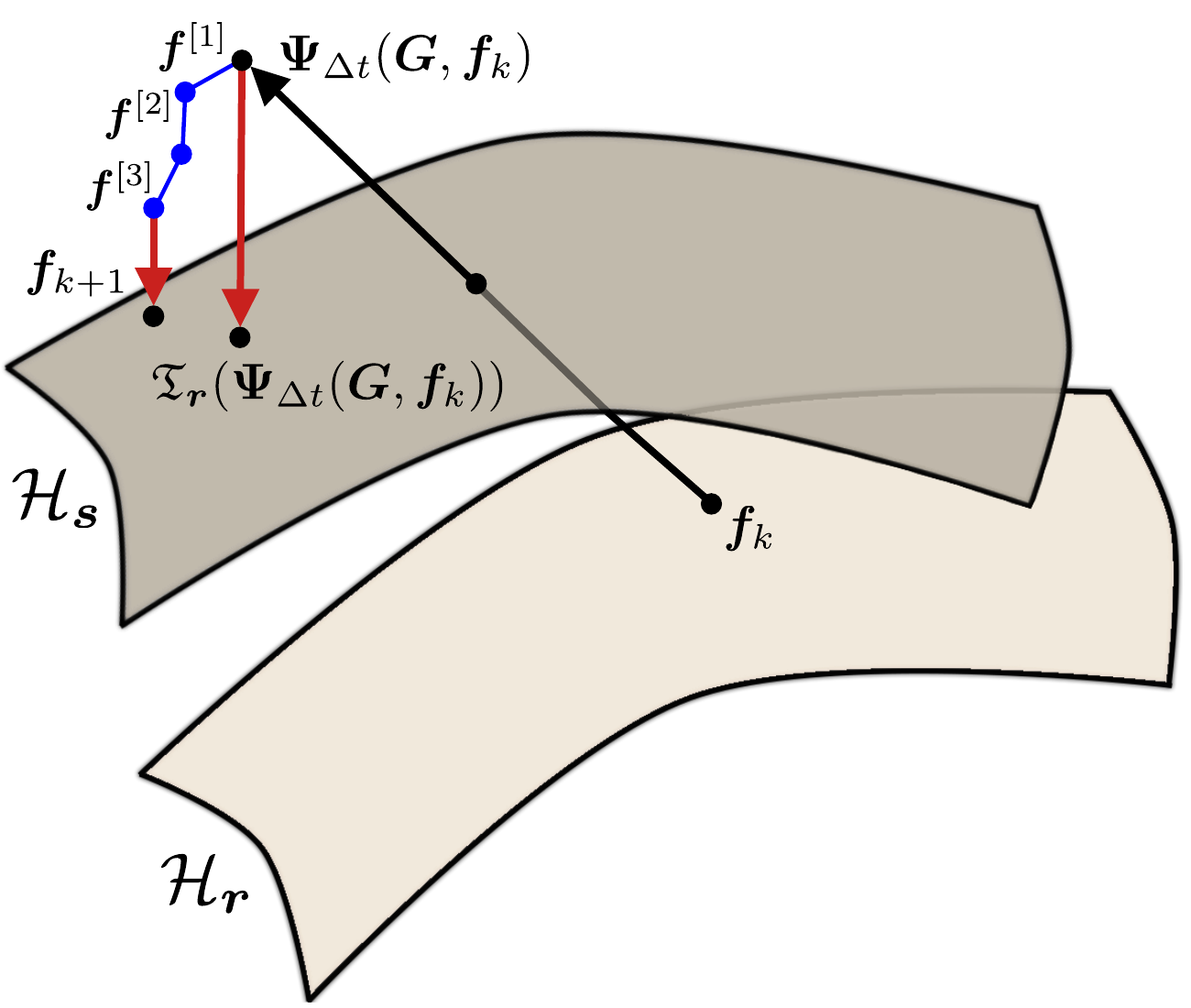}
\end{center}
\caption{
Sketch of implicit and explicit step-truncation 
integration methods. Given a tensor $\bm f_k$ with multilinear 
rank $\bm r$ on the tensor manifold ${\cal H}_{\bm r}$, we first 
perform an explicit time-step, e.g., with the conventional 
time-stepping scheme \eqref{eqn:discrete-flow-intro}.
The explicit step-truncation integrator then projects
$\bm \Psi_{\Delta t}(\bm G,\bm f_k)$
onto a new tensor manifold ${\cal H}_{\bm s}$ 
(solid red line). 
The multilinear rank $\bm s$ is chosen adaptively 
based on desired accuracy and 
stability constraints \cite{rodgers2020adaptive}. 
On the other hand, the implicit step-truncation method 
takes $\bm \Psi_{\Delta t}(\bm G,\bm f_k)$ 
as input and generates a sequence of fixed-point 
iterates ${\bm f}^{[j]}$ shown as dots connected with 
blue lines. The last iterate is then projected 
onto a low rank tensor manifold, illustrated 
here also as a red line landing on
${\cal H}_{\bm s}$. This operation is 
equivalent to the {compression step} 
in the HT/TT-GMRES algorithm described 
in \cite{dolgov2013ttgmres}.}
\label{fig:intro-surf-compare}
\end{figure}

The paper is organized as follows.
In section \ref{sec:explicit-st},
we briefly review rank-adaptive explicit step-truncation 
methods, and present a new convergence proof
for these methods which applies also
to implicit step-truncation methods.
In section \ref{sec:implicit-st},
we discuss the proposed new algorithms for 
implicit step-truncation integration. 
In section \ref{sec:implicit-conv} we study convergence of 
particular implicit step-truncation methods, namely the
step-truncation implicit Euler and midpoint methods.
In section \ref{sec:lin-stability} we prove
that the stability region of an implicit step-truncation
method is identical to that of the implicit
method without tensor truncation.
Finally, in section \ref{sec:numerics}
we present numerical applications
of implicit step-truncation algoritihms to
stiff PDEs. In particular, we
study a two-dimensional Allen-Cahn
equation, a four-dimensional Fokker-Planck
equation, and a six-dimensional 
nonlinear Schr\"odinger equation.
We also include a brief appendix in which 
we discuss numerical algorithms to solve 
linear and nonlinear algebraic equations 
on tensor manifolds via the inexact 
Newton's method with HT/TT-GMRES iterations. 

\section{Explicit step-truncation methods}
\label{sec:explicit-st}
\noindent
In this section we briefly review explicit 
step-truncation methods to integrate the tensor-valued 
ODE \eqref{eqn:ode} on tensor manifolds with variable rank. 
For a complete account of this theory see \cite{rodgers2020adaptive}.
We begin by first discretizing the ODE in time 
with any standard explicit one-step method on an 
evenly-spaced temporal grid
\begin{equation}
\label{eqn:discrete-flow}
{\bm f}_{k+1} = 
{\bm \Psi}_{\Delta t}({\bm G}, {\bm f}_{k}).
\end{equation}
Here, ${\bm f}_{k}$ denotes an approximation of the exact 
solution ${\bm f}(k\Delta t)$ for $k=1,2,...,N$, and 
${\bm \Psi}_{\Delta t}$ is an increment function.
For example, ${\bm \Psi}_{\Delta t}$ can be the 
increment function corresponding to the Euler forward method
\begin{equation}
{\bm \Psi}_{\Delta t}({\bm G}, {\bm f}_k)
={\bm f}_k + \Delta t
{\bm G}({\bm f}_k).
\label{EFm}
\end{equation}
In the interest of saving computational resources 
when iterating \eqref{eqn:discrete-flow} we look for 
an approximation of $\bm f_k$ on a low-rank tensor 
manifold ${\cal H}_{\bm r}$ \cite{uschmajew2013geometry} 
with multilinear rank $\bm r$. ${\cal H}_{\bm r}$ 
is taken to be the manifold of Hierarchical Tucker 
(HT) tensors. The easiest way 
for approximating \eqref{eqn:discrete-flow} 
on ${\cal H}_{\bm r}$ is to apply a nonlinear 
projection operator \cite{grasedyck2010hierarchical}
(truncation operator)
\begin{equation}
\mathfrak{T}_{\bm r}:\mathbb{R}^{n_1\times n_2\times\cdots
\times n_d} \to \overline{\cal H}_{\bm r},
\end{equation}
where $\overline{\cal H}_{\bm r}$ denotes the closure 
of ${\cal H}_{\bm r}$. This yields the 
explicit step-truncation scheme 
\begin{equation}
\label{eqn:discrete-flow-step-truncation}
{\bm f}_{k+1} = 
{\mathfrak T}_{\bm r}\left({\bm \Psi}_{\Delta t}({\bm G}, 
{\bm f}_{k})\right).
\end{equation}
The rank $\bm r$ can vary with 
the time step based on appropriate error 
estimates as time integration proceeds 
\cite{rodgers2020adaptive}.
We can also project $\bm G(\bm f)$ onto 
$\overline{\cal H}_{\bm r}$ before applying 
${\bm \Psi}_{\Delta t}$. With reference to \eqref{EFm} 
this yields
\begin{align}
\label{eqn:st-exp-euler}
\bm f_{k+1}=
{\mathfrak{T}}_{{\bm r}_2}\left({\bm f}_k + \Delta t
{\mathfrak{T}}_{{\bm r}_1}\left({\bm G}({\bm f}_k)\right)\right).
\end{align}
Here $\bm r_1$ and ${\bm r}_2$ are
truncation ranks determined by the inequalities\footnote{Throughout 
the paper, $\left\|\cdot \right\|$ denotes the standard 
tensor 2-norm \cite{grasedyck2010hierarchical,kressner2014algorithm}, 
or a weighted version of it.}      
\begin{equation}
\left\| {\bm G}({\bm f}_k)-\mathfrak{T}_{{\bm r}_1}
\left(
{\bm G}({\bm f}_k)\right)\right\|\leq e_1, \qquad 
\left\| \widetilde{\bm f}_{k+1}-\mathfrak{T}_{{\bm r}_2}
\left(\widetilde{\bm f}_{k+1}\right)\right\|\leq e_2,
\label{et}
\end{equation}
where $e_1$ and $e_2$ are chosen error thresholds. As 
before, $\bm r_1$ and $\bm r_2$ can change with every 
time step. 
In particular, if we choose $e_1 = K_1 \Delta t$ and 
$e_2 = K_2 \Delta t^2$ (with $K_1$ and $K_2$ given constants) 
then the step-tuncation method \eqref{eqn:st-exp-euler} 
is convergent (see \cite{rodgers2020adaptive} for details). 
More generally, let 
\begin{equation}
\bm f_{k+1} =  {\bm \Phi}_{\Delta t}({\bm G},{\bm f_k},{\bm e}) 
\label{gen-explicit-st}
\end{equation}
be an explicit step-truncation method in which all we project 
all $\bm G(\bm f_k)$ appearing in the increment 
function ${\bm \Psi}_{\Delta t}({\bm G}, {\bm f}_k)$  
onto tensor manifolds $\overline{{\cal H}}_{\bm r_i}$ by setting 
suitable error thresholds $\bm e=(e_1,e_2,\ldots)$. For instance, if 
${\bm \Psi}_{\Delta t}$ is defined by the explicit midpoint 
method, i.e., 
\begin{equation}
{\bm \Psi}_{\Delta t}\left({\bm G},{\bm f_k}\right)  = 
\bm f_k +\bm G\left(\bm f_k+\frac{\Delta t}{2}\bm G(\bm f_k)
\right)
\label{ExpMidpoint}
\end{equation}
then 
\begin{equation}
{\bm \Phi}_{\Delta t}({\bm G},{\bm f_k},{\bm e})  = 
\mathfrak{T}_{\bm r_3}\left(\bm f_k +
\mathfrak{T}_{\bm r_2}\left[\bm G\left(\bm f_k+\frac{\Delta t}{2}\mathfrak{T}_{\bm r_1}\left[\bm G(\bm f_k)\right]\right) \right]\right),
\end{equation}
where $\bm e=(e_1,e_2,e_3)$ is a vector collecting the 
truncation error thresholds yielding the multilinear 
ranks $\bm r_1$, $\bm r_2$ and $\bm r_3$. 
By construction, step-truncation methods of the form 
\eqref{gen-explicit-st} satisfy 
\begin{equation}
\left \|
{\bm \Psi}_{\Delta t}({\bm G}, {\bm f})
-
{\bm \Phi}_{\Delta t}({\bm G},{\bm f},{\bm e})
\right \| \leq R({\bm e}),
\end{equation}
where $R({\bm e})$ is the error 
due to tensor truncation.
We close this section with a reformulation of the 
convergence theorem for explicit step-truncation
methods in \cite{rodgers2020adaptive}, which applies 
also to implicit methods.

\begin{theorem}[Convergence of step-truncation methods]
\label{thm:convergence}
Let ${\bm \varphi}_{\Delta t}({\bm G},{\bm f})$ be the 
one-step exact flow map defined by \eqref{eqn:ode}, and 
${\bm \Phi}_{\Delta t}({\bm G},{\bm f},{\bm e})$
be the increment function of a step-truncation method 
with local error of order $p$, i.e.,
\begin{equation}
\label{eqn:st-consistency}
\left \| 
{\bm \varphi}_{\Delta t}({\bm G},{\bm f})
-
{\bm \Phi}_{\Delta t}({\bm G},{\bm f},{\bm e})
\right \|
\leq K\Delta t^{p+1}\quad \text{as} \quad \Delta t\rightarrow 0.
\end{equation}
If there exist truncation errors ${\bm e} = {\bm e}(\Delta t)$ 
(function of $\Delta t$) and constants $C,E>0$ (dependent on
${\bm G}$)
so that the stability condition
\begin{equation}
\label{eqn:st-stability}
\left \|
{\bm \Phi}_{\Delta t}({\bm G},\hat{\bm f},{\bm e})
-
{\bm \Phi}_{\Delta t}({\bm G},\tilde{\bm f},{\bm e})
\right \|
\leq
(1 + C\Delta t)
\left \|
\hat{\bm f}-\tilde{\bm f}
\right \|
+ E\Delta t^{m+1}
\end{equation}
holds as $\Delta t\rightarrow 0$, then
the step-truncation method is convergent with order $z=\text{min}(m,p)$.
\end{theorem}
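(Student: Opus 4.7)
The plan is to follow the classical Lax-equivalence style argument: consistency plus stability imply convergence, where here ``stability'' has been written in a one-step form that already absorbs the tensor-truncation residual into the $E\Delta t^{m+1}$ term. First, I would define the global error $E_k = \|\bm f(t_k)-\bm f_k\|$ and use the fact that the exact solution satisfies $\bm f(t_{k+1}) = \bm \varphi_{\Delta t}(\bm G,\bm f(t_k))$ while the numerical solution satisfies $\bm f_{k+1} = \bm \Phi_{\Delta t}(\bm G,\bm f_k,\bm e)$. A single application of the triangle inequality inserting $\bm \Phi_{\Delta t}(\bm G,\bm f(t_k),\bm e)$ as an intermediate term then splits the step-to-step error into a consistency piece and a stability piece.

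Next I would apply hypothesis \eqref{eqn:st-consistency} to bound the consistency piece by $K\Delta t^{p+1}$, and hypothesis \eqref{eqn:st-stability} with $\hat{\bm f}=\bm f(t_k)$ and $\tilde{\bm f}=\bm f_k$ to bound the stability piece by $(1+C\Delta t)E_k + E\Delta t^{m+1}$. This yields the one-step recurrence
\begin{equation}
E_{k+1} \leq (1+C\Delta t)\, E_k + K\Delta t^{p+1} + E\Delta t^{m+1}.
\end{equation}
From here the plan is the standard discrete Gr\"onwall unrolling: iterate the recurrence $k$ times starting from $E_0=0$ to obtain a geometric series in $(1+C\Delta t)$, bound that series by $[(1+C\Delta t)^k-1]/(C\Delta t)$, and then use $(1+C\Delta t)^k\leq e^{CT}$ for $k\Delta t\leq T$. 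The result is
\begin{equation}
E_k \;\leq\; \frac{e^{CT}-1}{C}\bigl(K\Delta t^{p}+E\Delta t^{m}\bigr),
\end{equation}
which decays as $\Delta t^{\min(p,m)}$, establishing convergence of order $z=\min(m,p)$.

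I do not foresee a genuine obstacle here, since the statement has been carefully arranged so that the nonstandard element (the rank-truncation error) appears as an additive $E\Delta t^{m+1}$ term in the stability inequality rather than as a multiplicative defect of the increment map. The only point requiring a small amount of care is to make sure the truncation schedule $\bm e=\bm e(\Delta t)$ chosen to realize \eqref{eqn:st-stability} is the same one that guarantees the local truncation bound \eqref{eqn:st-consistency}; if both hypotheses are stated with a common $\bm e(\Delta t)$, the two estimates combine cleanly in the triangle step and the rest is a mechanical Gr\"onwall argument. I would conclude by noting that when tensor truncation is switched off (so $E=0$) the theorem recovers the classical $O(\Delta t^{p})$ convergence result for one-step integrators, which provides a consistency check on the statement.
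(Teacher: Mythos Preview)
Your proposal is correct and follows essentially the same route as the paper: insert $\bm\Phi_{\Delta t}(\bm G,\bm f(t_k),\bm e)$ via the triangle inequality, bound the two pieces by consistency and stability respectively, and then propagate the resulting one-step recurrence. If anything, your version is more complete, since you carry out the discrete Gr\"onwall unrolling explicitly to obtain the $\frac{e^{CT}-1}{C}(K\Delta t^{p}+E\Delta t^{m})$ bound, whereas the paper stops after writing down the inductive step and invokes $z=\min(m,p)$ without spelling out that the accumulated coefficient remains bounded as $N\Delta t=T$.
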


\begin{proof}
Under the assumption that $\Delta t$ is small enough
for our stability and consistency to hold,
we proceed by induction on the number of steps. The one-step 
case coincides with the consistency condition. Next, we assume that 
\begin{equation}
\label{eqn:inductive-convergence}
\left \| 
{\bm \varphi}_{\Delta t(N-1)}({\bm G},{\bm f}_0)
-
{\bm f}_{N-1}
\right \|
\leq Q_{N-1}\Delta t^{z}.
\end{equation}
Let $T=N\Delta t$ be the final integration time.
By applying triangle inequality and the
semigroup property of the flow map,
\begin{align}
\left \|
{\bm \varphi}_{T}({\bm G},{\bm f}_0)
-
{\bm \Phi}_{\Delta t}({\bm G},{\bm f}_{N-1},{\bm e})
\right \|
&\leq
\left \|
{\bm \varphi}_{\Delta t}({\bm G},
{\bm \varphi}_{\Delta t(N-1)}({\bm G},{\bm f}_{0}))
-
{\bm \Phi}_{\Delta t}({\bm G},
{\bm \varphi}_{\Delta t(N-1)}({\bm G},{\bm f}_{0}),
{\bm e})
\right \|\nonumber\\
&\qquad +
\left \|
{\bm \Phi}_{\Delta t}({\bm G},
{\bm \varphi}_{\Delta t(N-1)}({\bm G},{\bm f}_{0}),
{\bm e})
-
{\bm \Phi}_{\Delta t}({\bm G},{\bm f}_{N-1},{\bm e})
\right \|.
\label{ll}
\end{align}
Using the consistency condition \eqref{eqn:st-consistency}
we can bound the first term at the right hand side of \eqref{ll} 
by $K_{N-1}\Delta t^{p+1}$, where
$K_{N-1}$ represents a local error coefficient.
On the other hand, using the stability condition \eqref{eqn:st-stability}
we can can bound the second term at the right hand side of \eqref{ll} as
\begin{align}
\left \|
{\bm \Phi}_{\Delta t}({\bm G},
{\bm \varphi}_{\Delta t(N-1)}({\bm G},{\bm f}_{0}),
{\bm e})
-
{\bm \Phi}_{\Delta t}({\bm G},{\bm f}_{N-1},{\bm e})
\right \|
\leq
(1+C\Delta t)
&\left \|
{\bm \varphi}_{\Delta t(N-1)}({\bm G},{\bm f}_{0})
- {\bm f}_{N-1}
\right \|\nonumber \\
+ E\Delta t^{m+1}.
\label{ttt}
\end{align}
A substitution of \eqref{ttt} and \eqref{eqn:inductive-convergence}
into \eqref{ll} yields
\begin{align*}
\left \|
{\bm \varphi}_{T}({\bm G},{\bm f}_0)
-
{\bm \Phi}_{\Delta t}({\bm G},{\bm f}_{N-1},{\bm e})
\right \|
&\leq
K_{N-1}\Delta t^{p+1}
+(1+C\Delta t)Q_{N-1}\Delta t^z
+ E\Delta t^{m+1}.
\end{align*}
Recalling that $z = \text{min}(m,p)$ completes the proof.
\begin{flushright}
\qed
\end{flushright}
\end{proof}

\vspace{0.2cm}
We remark that the above proof may be modified to
include explicit step-truncation linear multi-step methods, i.e., 
step-truncation Adams methods. To this end, it is sufficient 
to replace ${\bm f}_k$ with the vector 
$({\bm f}_{k-s},{\bm f}_{k-s+1},\dots,{\bm f}_{k})$
and the stability condition \eqref{eqn:st-stability} 
with
\begin{equation}
\left \|
{\bm \Phi}_{\Delta t}({\bm G},{\bm{\hat f}}_{1},
{\bm{\hat f}}_{2},\dots,{\bm{\hat f}}_{s},{\bm e})
-
{\bm \Phi}_{\Delta t}({\bm G},
{\bm{\tilde f}}_{1},
{\bm{\tilde f}}_{2},\dots,{\bm{\tilde f}}_{s}
,{\bm e})
\right \|
\leq
\left \|
{\bm{\hat f}}_s-{\bm{\tilde f}}_s
\right \|
+
\sum_{j=1}^{s}
C_j\Delta t
\left \|
{\bm{\hat f}}_j-{\bm{\tilde f}}_j
\right \|
+ E\Delta t^{m+1}.
\end{equation}
We then have an analogous theorem
proven using an inductive argument
based on the initial conditions 
${\bm f}(0), {\bm f}(\Delta t),\dots,{\bm f}((s-1)\Delta t)$.

\section{Implicit step-truncation methods}
\label{sec:implicit-st}

\noindent
To introduce implicit step-truncation tensor methods, let us  
begin with the standard Euler backward scheme  
\begin{equation}
\label{eqn:imp-euler}
{\bm f}_{k+1} = {\bm f}_{k} + \Delta t {\bm G}({\bm f}_{k+1}),
\end{equation}
and the associated root-finding problem 
\begin{equation}
\bm H_k(\bm f_{k+1})={\bm f}_{k+1} - {\bm f}_{k} -
\Delta t {\bm G}({\bm f}_{k+1})={\bm 0}.
\label{EB}
\end{equation}
Equation \eqref{EB} allows us to compute $\bm f_{k+1}$ 
as a zero of the function $\bm H_k$. This can be done, e.g., using 
the Netwon's method with initial guess 
$\bm f^{[0]}_{k+1}=\bm f_k$.
As is well-known, if the Jacobian of $\bm H_k$ 
is invertible within a neighborhood of $\bm f_k$, then 
the implicit function theorem guarantees the existence 
of a locally differentiable (in some neighborhood of $\bm f_k$) 
nonlinear map ${\bm \Theta}_{\Delta t}$ depending on $\bm G$ 
such that 
\begin{equation}
{\bm f}_{k+1}={\bm \Theta}_{\Delta t}\left({\bm G},{\bm f}_k\right).
\label{Tmap}
\end{equation} 
In the setting of  Newton's method described above, 
the map ${\bm \Theta}_{\Delta t}$
is computed iteratively. 
An implicit step-truncation scheme can be then
formulated by applying the tensor truncation 
operator $\mathfrak{T}_{\bm r}$ to the right hand side 
of \eqref{Tmap}, i.e., 
\begin{equation}
\label{eqn:ideal-implicit-st}
{\bm f}_{k+1} = {\mathfrak T}_{{\bm r}}\left(
{\bm \Theta}_{\Delta t}\left({\bm G},{\bm f}_k\right)\right).
\end{equation}
The tensor truncation rank $\bm r$ can be selected 
based on the inequality 
\begin{equation}
\label{eqn:ideal-implicit-st_rank}
\|{\mathfrak T}_{{\bm r}}(
{\bm \Theta}_{\Delta t}({\bm G},{\bm f}_k))
-
{\bm \Theta}_{\Delta t}({\bm G},{\bm f}_k)
\|\leq A\Delta t^2.
\end{equation}
It was shown in \cite{rodgers2020adaptive} that 
this yields an order one (in time) integration scheme. 
Of course, if the Jacobian of $\bm H_k$ in equation \eqref{EB} 
can be computed and stored in computer memory, then we can 
approximate $\bm f_{k+1}={\bm \Theta}_{\Delta t}({\bm G},{\bm f}_k)$ 
for any given $\bm f_{k}$ and $\bm G$ using Newton's iterations.  
However, the exact Newton's method is not available to us 
in the high-dimensional tensor setting.

Hence, we look for an approximation of  
${\bm \Theta}_{\Delta t}({\bm G},{\bm f}_k)$
computed using the inexact Newton 
method \cite{dembo1982inexact}.
For the inexact matrix inverse step,
we apply the relaxed TT-GMRES algorithm
described in \cite{dolgov2013ttgmres}. 
This is an iterative method for the solution 
of linear equations which makes use of an inexact 
matrix-vector product defined by low-rank truncation. 
Though the algorithm was developed for TT tensors, 
it may be also applied to HT tensors without 
significant changes. 

In Appendix \ref{apndx:ht-tt-newton} we describe 
the inexact Newton's method with HT/TT-GMRES 
iterations to solve an arbitrary algebraic 
equation of the form $\bm H_k(\bm f)=\bm 0$ 
(e.g., equation \eqref{EB}) on a tensor manifold 
with a given rank. 
The algorithm can be used to approximate the 
mapping ${\bm \Theta}_{\Delta t}({\bm G},{\bm f}_k)$ 
corresponding to any implicit integrator, 
and it returns a tensor with can be then 
truncated further as in \eqref{eqn:ideal-implicit-st}.
This operation is equivalent to the so-called 
``compression step'' in \cite{dolgov2013ttgmres} and 
it is described in detail in the next section.

\subsection{The compression step}
\label{subsec:low-rank-errors}
\noindent
While increasing the tensor rank may be necessary
for convergence of the HT/TT-GMRES iterations, 
it is possible that we raise the rank by more 
than is required for the desired order of accuracy
in a single time step. 
Therefore, it is convenient to apply an additional
tensor truncation after computing, say,
$j$ steps of the HT inexact Newton's
method which returns ${\bm f}_{k+1}^{[j]}$.
This is the same as the ``compression step''
at the end of HT/TT-GMRES algorithm as presented
in \cite{dolgov2013ttgmres}.
This gives us our final estimate of
${\bm f}((k+1)\Delta t)$ as
\begin{equation}
{\bm f}_{k+1} = {\mathfrak T}_{\bm r}
\left ({\bm f}_{k+1}^{[j]}\right ),
\qquad
\left \|
{\mathfrak T}_{\bm r}
\left ({\bm f}_{k+1}^{[j]}\right )
-{\bm f}_{k+1}^{[j]}
\right \|\leq e_{\bm r}.
\end{equation}
Regarding the selection of the truncation error $e_{\bm r}$ 
we proceed as follows. 
Suppose that ${\bm \Psi}_{\Delta t}({\bm G},{\bm f}_k)$ is
an explicit integration scheme of the same order (or higher) 
than the implicit scheme being considered. 
Then we can estimate local error as
\begin{align}
\left \|
{\bm f}_{k+1}^{[j]} - {\bm f}((k+1)\Delta t)
\right \|
&\leq 
\left \|
{\bm f}_{k+1}^{[j]}
-
{\bm \Psi}_{\Delta t}({\bm G},{\bm f}_k)
\right \|
+
\left \|
{\bm \Psi}_{\Delta t}({\bm G},{\bm f}_k)
-{\bm f}((k+1)\Delta t)
\right \|\nonumber
 \\
&=
\left \|
{\bm f}_1^{[j]}
-
{\bm \Psi}_{\Delta t}({\bm G},{\bm f}_k)
\right \|
+O(\Delta t^{p+1}),
\end{align}
Thus, we may roughly
estimate the local
truncation error and
set this as the chosen
error for approximation to
HT or TT rank $\bm r$ using
\begin{equation}
{e}_{\bm r}=
\left \|
{\bm f}_{k+1}^{[j]}
-
{\bm \Psi}_{\Delta t}({\bm G},{\bm f}_k)
\right \|.
\label{er1}
\end{equation}
We may drop more singular values than
needed, especially if the choice of $\Delta t$
is outside the region of stability of the explicit scheme
${\bm \Psi}_{\Delta t}({\bm G},{\bm f}_k)$.
However, this estimate guarantees that
we do not change the overall convergence rate.
Moreover, it cannot impact
stability of the implicit step-truncation 
integrator since the compression step has 
operator norm equal to one, regardless of the 
rank chosen. This statement is supported by the 
analysis presented in \cite{rodgers2020stability} 
for step-truncation linear multi-step methods.
In all our numerical experiments we
use the explicit step-truncation midpoint
method to estimate local error, i.e., 
${\bm \Psi}_{\Delta t}({\bm G},{\bm f}_k)$ 
in \eqref{er1} is set as in \eqref{ExpMidpoint}.

\section{Convergence analysis of implicit step-truncation methods}
\label{sec:implicit-conv}
\noindent
In this section we show that applying the inexact
Newton iterations with HT/TT-GMRES to the implicit Euler and the 
implicit midpoint methods result
in implicit step-truncation schemes 
that fit into the framework of Theorem \ref{thm:convergence}, 
and therefore are convergent. 
\subsection{Implicit step-truncation Euler method}
\label{sec:imp-euler}
\noindent

Consider the implicit Euler scheme \eqref{EB}, and suppose that 
the solution of the nonlinear equation $\bm H_k(\bm f_{k+1})=\bm 0$ is 
computed using the inexact Newton method with HT/TT-GMRES  
iterations as discussed in Appendix \ref{apndx:ht-tt-newton}.
Let ${\bm f}_{k+1}^{[j]}$ ($j=1,2,\ldots,$) be the 
sequence of tensors generated by the algorithm and approximating 
$\bm f_{k+1}$ (exact solution of $\bm H_k(\bm f_{k+1})=\bm 0$). 
We set a stopping criterion for terminating Newton's iterations 
based on the residual, i.e., 
\begin{equation}
\left \|
{\bm H}_k\left ({\bm f}_{k+1}^{[j]}\right )
\right \|\leq \varepsilon_{\text{tol}}.
\label{stopping}
\end{equation}
This allows us to adjust the rank of ${\bm f}_{k+1}^{[j]}$ 
from one time step to the next, depending on the desired 
accuracy $\varepsilon_{\text{tol}}$. 
Our goal is to analyze convergence of such rank-adaptive implicit 
method when a finite number of contraction mapping steps is taken. 
To this end, we will fit the method into the framework of
Theorem \ref{thm:convergence}. We begin by noticing that
\begin{align}
\label{eqn:newton-distance}
\left\|
{\bm f}_{k+1} -
{\bm f}_{k+1}^{[j]}
\right\|
&=
\left \|
{\bm H}_k^{-1}\left (
{\bm H}_k\left (
{\bm f}_{k+1}
\right )
\right )
-
{\bm H}_k^{-1}\left (
{\bm H}_k\left (
{\bm f}_{k+1}^{[j]}
\right )
\right )
\right \|
\leq
L_{{\bm H}_k^{-1}}
\left \|
{\bm H}\left ({\bm f}_{k+1}^{[j]}\right )
\right \|\leq
L_{{\bm H}_k^{-1}}
\varepsilon_{\text{tol}},
\end{align}
where $L_{{\bm H}_k^{-1}}$
is the local Lipschitz constant of the smooth
inverse map $H^{-1}_k$, the existence of which is granted 
by the inverse function theorem.
This allows us to write the local truncation error as
\begin{align}
\label{eqn:consistency-euler}
\left\|
{\bm f}(\Delta t) -
{\bm f}_{1}^{[j]}
\right\|
&\leq
\left \|
{\bm f}(\Delta t)
-
{\bm f}_{1}
\right \|
+
\left \|
{\bm f}_{1}
-
{\bm f}_{1}^{[j]}
\right \|
\leq
K_1\Delta t^{2}
+
L_{{\bm H}^{-1}}
\varepsilon_{\text{tol}},
\end{align}
where $K_1$ is a local error coefficient.
In order to maintain order one convergence,
we require that $\varepsilon_{\text{tol}} \leq K\Delta t^2$
for some constant $K\geq 0$. 

Next, we discuss the stability condition 
\eqref{eqn:st-stability}
in the context of the implicit step-truncation Euler scheme, assuming that 
${\bm f}_{k+1}$ can be found exactly by the inexact Newton's method, 
eventually after an infinite number of iterations.  
Denote by ${\bm{\hat f}}_0$, ${\bm{\tilde f}}_0$
two different initial conditions. Performing one step of the
standard implicit Euler's method yields the following bound 
\begin{align*}
\left \|
{\bm{\hat f}}_1 -{\bm{\tilde f}}_1
\right \|
=
\left \| \left ( {\bm{\hat f}}_0 +
\Delta t {\bm G}({\bm{\hat f}}_1)\right)
-
\left ( {\bm{\tilde f}}_0 + 
\Delta t{\bm G}({\bm{\tilde f}}_1) \right )
\right \|
\leq
\left \| {\bm{\hat f}}_0 - {\bm{\tilde f}}_0 
\right \|
+
\Delta t L_{\bm G} \left \| 
{\bm{\hat f}}_1
-
{\bm{\tilde f}}_1
\right \|.
\end{align*}
where $L_{\bm G}$ is the Lipschitz constant of $\bm G$. 
By collecting like terms, we obtain
\begin{align}
\label{eqn:euler-lipschitz}
\left \|
{\bm{\hat f}}_1 -{\bm{\tilde f}}_1
\right \|
\leq 
\frac{1}{1-\Delta tL_{\bm G}}
\left \| {\bm{\hat f}}_0 - {\bm{\tilde f}}_0 
\right \|
\leq (1+2\Delta tL_{\bm G})
\left \| {\bm{\hat f}}_0 - {\bm{\tilde f}}_0 
\right \|.
\end{align}
The last inequality comes by noting that
$1/({1-\Delta tL_{\bm G}})\leq 1+2\Delta tL_{\bm G}$
when $\Delta t$ is sufficiently small, i.e., 
$\Delta t \leq 1/(2L_{\bm G})$. 
This is zero-stability condition, i.e., 
a stability condition that holds for small $\Delta t$, 
which will be used for convergence analysis. 
Regarding the behavior of the scheme for finite 
$\Delta t$ we will show in section 
\ref{sec:lin-stability} that the implicit 
step-truncation Euler scheme is unconditionally 
stable.
Next, we derive a stability condition of the form \eqref{eqn:euler-lipschitz} 
when the root of $\bm H_k(\bm f_{k+1})=\bm 0$ is 
computed with the inexact Newton's method with HT/TT-GMRES iterations. 
In this case we have 
\begin{align}
\nonumber
\left \|
{\bm{\hat f}}_1^{[j]}
-{\bm{\tilde f}}_1^{[m]}
\right \|
&\leq
\left \|
{\bm{\hat f}}_1^{[j]}
-{\bm{\hat f}}_1^{[\infty]}
\right \|
+
\left \|
{\bm{\hat f}}_1^{[\infty]}
-{\bm{\tilde f}}_1^{[\infty]}
\right \|
+
\left \|
{\bm{\tilde f}}_1^{[m]}
-{\bm{\tilde f}}_1^{[\infty]}
\right \|
\\
&=
\left \|
{\bm{\hat f}}_1^{[j]}
-{\bm{\hat f}_1}
\right \|
+
\left \|
{\bm{\hat f}_1} -{\bm{\tilde f}_1}
\right \|
+
\left \|
{\bm{\tilde f}}_1^{[m]}
-{\bm{\tilde f}_1}
\right \|
\nonumber
\\
\label{eqn:stability-euler-2}
&\leq
(1+2L_{\bm G}\Delta t)
\left \|
{\bm{\hat f}}_0 - {\bm{\tilde f}}_0 \right \|
+
2L_{{\bm H}^{-1}}
 \varepsilon_{\text{tol}}.
\end{align}
Thus, the stability condition is satisfied
by the same condition on $\varepsilon_{\text{tol}}$ 
that satisfies the first-order consistency 
condition \eqref{eqn:consistency-euler}, i.e.,
$\varepsilon_{\text{tol}} \leq K\Delta t^2$.
At this point we apply Theorem \ref{thm:convergence}
with consistency and stability conditions 
\eqref{eqn:st-consistency}-\eqref{eqn:st-stability} replaced by 
\eqref{eqn:consistency-euler} and \eqref{eqn:stability-euler-2}, 
respectively, and conclude that the implicit step-truncation 
Euler scheme is convergent with order one if 
$\varepsilon_{\text{tol}} \leq K\Delta t^2$.

\begin{remark}
\label{rmk:linear-subcase-nonlinear}
When $\bm G$ is linear, i.e., when the 
tensor ODE \eqref{eqn:ode} is linear, 
then we may apply HT/TT-GMRES algorithm in 
Appendix \ref{apndx:ht-tt-newton} without invoking 
Newton's method. In this case, the local error 
coefficient $L_{{\bm H}_k^{-1}}$ can be exchanged 
for the coefficient of $\varepsilon$ at the right 
side of inequality \eqref{eqn:gmres-error-2}.
\end{remark}

\subsection{Implicit step-truncation midpoint method}

\noindent
The implicit midpoint rule \cite{hairer2006geometric},
\begin{equation}
{\bm f}_{k+1} = {\bm f}_{k}+
\Delta t {\bm G}\left (
\frac{1}{2}({\bm f}_{k} + {\bm f}_{k+1})
\right ),
\end{equation}
is a symmetric and
symplectic method of order 2.
By introducing
\begin{equation}
{\bm H}_k
\left (
{\bm f}_{k+1}
\right )
=
{\bm f}_{k+1} - {\bm f}_{k}-
\Delta t {\bm G}\left (
\frac{1}{2}({\bm f}_{k} + {\bm f}_{k+1})
\right )
= {\bm 0}
\label{impmid}
\end{equation}
we again see the implicit method as a root finding problem at each
time step. To prove convergence of the implicit 
step-truncation midpoint method we follow the 
same steps described in the previous section. 
To this end, consider the sequence 
of tensors ${\bm f}^{[j]}_{k+1}$ generated by the inexact 
Newton method with HT/TT-GMRES iterations 
(see Appendix \ref{apndx:ht-tt-newton}) applied 
to \eqref{impmid}. The sequence of tensors ${\bm f}^{[j]}_{k+1}$ 
approximates $\bm f_{k+1}$ satisfying \eqref{impmid}. 
As before, we terminate the inexact Newton's iterations 
as soon as condition \eqref{stopping} is satisfied.
By repeating the same steps that led us to inequality 
\eqref{eqn:newton-distance}, we have
\begin{align}
\label{eqn:consistency-midpoint}
\left\|
{\bm f}(\Delta t) -
{\bm f}_{1}^{[j]}
\right\|
\leq
K_1\Delta t^{3}
+
L_{{\bm H}_k^{-1}}
\varepsilon_{\text{tol}}.
\end{align}
Hence, setting the stopping tolerance
as $\varepsilon_{\text{tol}}\leq K\Delta t^{3}$
we get second-order consistency. We use this to determine
the stability condition \eqref{eqn:st-stability}.
As before, we derive the condition for when the 
zero of \eqref{impmid} is exact. Denote by ${\bm{\hat f}}_0$, 
${\bm{\tilde f}}_0$ two different initial conditions.
Performing one step of the standard implicit midpoint method 
yields the following bound 
\begin{align*}
\left \|
{\bm{\hat f}}_1 - {\bm{\tilde f}}_1
\right \|
\leq
\left (1+ \Delta t\frac{L_{\bm G}}{2}\right )
\left \|
{\bm{\hat f}}_0 - {\bm{\tilde f}}_0
\right \|
+
\Delta t\frac{L_{\bm G}}{2}
\left \|
{\bm{\hat f}}_1 - {\bm{\tilde f}}_1
\right \|.
\end{align*}
By collecting like terms we see that when
$\Delta t\leq 1/L_{\bm G}$,
\begin{align}
\left \|
{\bm{\hat f}}_1 - {\bm{\tilde f}}_1
\right \|
&\leq
\frac{2+ \Delta tL_{\bm G}}
{2- \Delta tL_{\bm G}}
\left \|
{\bm{\hat f}}_0 - {\bm{\tilde f}}_0
\right \|\nonumber\\
&\leq
\left (
1+\Delta t\frac{3L_{\bm G}}{2}
\right )
\left \|
{\bm{\hat f}}_0 - {\bm{\tilde f}}_0
\right \|.
\end{align}
The zero-stability condition for the implicit 
step-truncation midpoint method can now be found by 
repeating the arguments of inequality \eqref{eqn:stability-euler-2}. This gives
\begin{equation}
\left \|
{\bm{\hat f}}_1^{[j]}
-{\bm{\tilde f}}_1^{[m]}
\right \|
\leq
(1+L_{\bm G}\Delta t)
\left \|
{\bm{\hat f}}_0 - {\bm{\tilde f}}_0 \right \|
+
2L_{{\bm H}_k^{-1}}\varepsilon_{\text{tol}}.
\label{eqn:stability-midpoint-2}
\end{equation}
Thus, the stability condition is satisfied
by the same condition on $\varepsilon_{\text{tol}}$ 
that satisfies the second-order consistency 
condition \eqref{eqn:consistency-midpoint}, i.e.,
$\varepsilon_{\text{tol}} \leq K\Delta t^3$.
At this point we apply Theorem \ref{thm:convergence}
with consistency and stability conditions 
\eqref{eqn:st-consistency}-\eqref{eqn:st-stability} replaced by 
\eqref{eqn:consistency-midpoint} and \eqref{eqn:stability-midpoint-2}, 
respectively, and conclude that the implicit step-truncation 
midpoint scheme is convergent with order two if 
$\varepsilon_{\text{tol}} \leq K\Delta t^3$.

\section{Stability analysis}
\label{sec:lin-stability}
\noindent
We now address absolute stability of the proposed 
implicit step-truncation methods. This notion of 
stability is related to the behavior of 
the schemes when applied the initial 
value problem
\begin{equation}
\frac{d{\bm f}}{dt} = {\bm L} {\bm f} ,
\quad {\bm f}(0) = {\bm f}_0
\end{equation}
where $\bm L$ is a linear operator with 
eigenvalues in in the left half complex plane. 
After applying any standard implicit time stepping scheme, 
we end up with a system of linear equations of the form
\begin{equation}
\label{eqn:implicit-linear}
{\bm A}{\bm f}_{k+1} = {\bm W}{\bm f}_k.
\end{equation}
Specifically, for the implicit Euler 
we have ${\bm A} = {\bm I } - \Delta t{\bm L}$,
${\bm W} = {\bm I}$ while for the implicit midpoint method 
we have ${\bm A} = {\bm I } - 0.5\Delta t{\bm L}$,
${\bm W} = {\bm I } + 0.5\Delta t{\bm L}$.
As is well known, both implicit Euler and Implicit midpoint 
are unconditionally stable, in the sense that for any 
$\Delta t>0,$ $\| {\bm f}_k\|\rightarrow 0$ as $k\rightarrow \infty$.
One way of proving this is by noting that whenever 
the eigenvalues of ${\bm L}$ have negative real
part, we get 
\begin{equation}
\|{\bm A}^{-1}{\bm W}\| < 1,
\end{equation} 
and therefore the mapping ${\bm A}^{-1}{\bm W}$ is contractive. 
This implies the sequence $\bm f_k$ defined by 
\eqref{eqn:implicit-linear} converges to zero.
The following theorem characterizes 
what happens when we exchange exact matrix 
inverse ${\bm A}^{-1}$ with an inexact inverse computed 
by tensor HT/TT-GMRES iterations.

\begin{theorem}[Absolute stability of implicit step-truncation methods]
\label{thm:imp-st-stable}
Consider an implicit time stepping scheme of the form
\eqref{eqn:implicit-linear}, and suppose that
$\|{\bm A}^{-1}{\bm W}\| < 1$. Denote by 
${\hat {\bm  f}}_k$ the solution of 
$\bm A {\hat {\bm f}}_{k} = {\bm W}{\hat {\bm f}}_{k-1}$ ($k=1,2,\ldots$)
obtained with the HT/TT-GMRES tensor solver described 
in Appendix \ref{apndx:ht-tt-newton}, with $m$ Krylov iterations 
and stopping tolerance $\eta$. If 
\begin{equation}
\left\|{\bm A}{\hat {\bm  f}}_k - {\bm W}{\hat {\bm  f}}_{k-1}\right\|
\leq m\|{\bm A}\|\|{\bm A}^{-1}\|\eta,
\label{hyp}
\end{equation}
then the distance between ${\hat {\bm  f}}_k$ and the exact solution 
${\bm f}_k=\bm A^{-1}\bm W \bm f_{k-1}$ can be bounded as
\begin{equation}
\left\|{\hat {\bm  f}}_k - {\bm  { f}}_{k}\right\|
\leq \frac{m\|{\bm A}\|\|{\bm A}^{-1}\|^2}{
		1 - \|{\bm A}^{-1}{\bm W}\|}\eta.
\label{error}
\end{equation}
\end{theorem}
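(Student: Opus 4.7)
The plan is to set up a one-step error recursion for $\bm e_k := \hat{\bm f}_k - \bm f_k$, bound it using the hypothesized GMRES residual, and then sum the resulting geometric series.

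First, I would subtract the exact recursion $\bm A \bm f_k = \bm W \bm f_{k-1}$ from the inexact one to write
\begin{equation*}
\bm A \bm e_k = (\bm A \hat{\bm f}_k - \bm W \hat{\bm f}_{k-1}) + \bm W \bm e_{k-1} = \bm r_k + \bm W \bm e_{k-1},
\end{equation*}
where $\bm r_k := \bm A\hat{\bm f}_k - \bm W \hat{\bm f}_{k-1}$ is the GMRES residual at step $k$, which by hypothesis \eqref{hyp} satisfies $\|\bm r_k\| \leq m\|\bm A\|\|\bm A^{-1}\|\eta$. Multiplying by $\bm A^{-1}$ and applying the triangle inequality yields the scalar recursion
\begin{equation*}
\|\bm e_k\| \leq \|\bm A^{-1}\|\|\bm r_k\| + \|\bm A^{-1}\bm W\|\|\bm e_{k-1}\| \leq m\|\bm A\|\|\bm A^{-1}\|^2\eta + \rho\|\bm e_{k-1}\|,
\end{equation*}
with $\rho := \|\bm A^{-1}\bm W\| < 1$.

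Assuming the two sequences start from the same initial condition so that $\bm e_0 = \bm 0$, I would then unroll the recursion to obtain
\begin{equation*}
\|\bm e_k\| \leq m\|\bm A\|\|\bm A^{-1}\|^2\eta \sum_{j=0}^{k-1} \rho^j \leq \frac{m\|\bm A\|\|\bm A^{-1}\|^2}{1-\rho}\eta,
\end{equation*}
which is exactly \eqref{error}. A brief induction argument can formalize the unrolling if needed, but because the recursion is a contraction with an additive constant forcing term, the geometric-series bound is immediate.

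The only subtle point is the hypothesis that the residual at each step satisfies the uniform bound \eqref{hyp}; this is really an assumption about how HT/TT-GMRES is driven (stopping tolerance $\eta$ producing residuals scaled by the condition-number-like factor $m\|\bm A\|\|\bm A^{-1}\|$), not something to be proven here, so the argument reduces to the clean linear-recurrence estimate above. I would also note in passing the standing assumption $\bm e_0 = \bm 0$, since if $\bm e_0 \neq \bm 0$ one simply picks up an additional transient $\rho^k \|\bm e_0\|$ that decays away.
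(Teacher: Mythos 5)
Your proposal is correct and is essentially the paper's own argument: the paper also splits $\hat{\bm f}_k - \bm f_k$ into the single-step GMRES residual contribution (bounded via \eqref{hyp} by $m\|\bm A\|\|\bm A^{-1}\|^2\eta$) plus $\|\bm A^{-1}\bm W\|$ times the previous error, proceeds by induction with the partial geometric sums $\sum_{j=0}^{k-1}\|\bm A^{-1}\bm W\|^j$, and bounds by the full series. Your explicit remark that $\bm e_0 = \bm 0$ is assumed (or else one picks up a decaying transient) is a small clarification the paper leaves implicit.
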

Note  that \eqref{error} implies that 
$\left\|{\hat {\bm  f}}_k \right\| = O(\eta)$ as 
$k\rightarrow \infty$. In the context of HT/TT-GMRES iterations, 
the number $\eta$ can be controlled by setting
the stopping tolerance in Lemma
\ref{lemma:ht-tt-gmres} (Appendix \ref{apndx:ht-tt-newton})
as 
\begin{equation}
\varepsilon_k =\frac{\eta}{\left\|{\bm W}{\hat {\bm  f}}_k \right\|}
\end{equation}
at each time step $k$. 

\begin{proof}
The proof follows from a straightforward 
inductive argument. For $k=1$ we have
\begin{align}
{\hat {\bm {f}}}_1 - {\bm f}_1 &= 
\bm A^{-1}\bm A {\hat{\bm {f}}}_1 - \bm A^{-1}\bm W {\bm f}_0\nonumber\\
&= 
\bm A^{-1}\left(\bm A{\hat{\bm {f}}}_1 - \bm W {\bm f}_0\right).
\end{align}
By using \eqref{hyp}, we can bound $\left \|
{\hat{\bm {f}}}_1 - {\bm f}_1
\right \|$ as   
\begin{equation}
\left \|
{\hat{\bm {f}}}_1 - {\bm f}_1
\right \|
\leq m
\| {\bm A}\|
\| {\bm A}^{-1}\|^{2}\eta.
\label{49}
\end{equation}
For $k=2$ we have
\begin{align*}
\left \|
{\bm {\hat f}}_2 - {\bm f}_2
\right \|
&\leq 
\left \|
{\bm {\hat f}}_2 - {\bm A}^{-1}{\bm W}{\bm {\hat f}}_1
\right \|
+
\left \|
{\bm A}^{-1}{\bm W}{\bm {\hat f}}_1 - {\bm f}_2
\right \|\\
&=
\left \|
{\bm {\hat f}}_2 - {\bm A}^{-1}{\bm W}{\bm {\hat f}}_1
\right \|
+
\left \|
{\bm A}^{-1}{\bm W}{\bm {\hat f}}_1 - {\bm A}^{-1}{\bm W}{\bm f}_1
\right \|\\
&\leq
m\| {\bm A}\|
\| {\bm A}^{-1}\|^{2}\eta
+\|{\bm A}^{-1}{\bm W}\| 
\left \|
{\hat{\bm {f}}}_1 - {\bm f}_1
\right \|\\
&\leq\left (1+\|{\bm A}^{-1}{\bm W}\|
\right )m\| {\bm A}\|
\| {\bm A}^{-1}\|^{2}\eta,
\end{align*}
where the last inequality follows from \eqref{49}.
More generally,
\begin{equation}
\left \|
{\bm {\hat f}}_{k-1} - {\bm f}_{k-1}
\right \|\leq m\| {\bm A}\|
\| {\bm A}^{-1}\|^{2}\eta \sum_{j=0}^{k-2}\|{\bm A}^{-1}{\bm W}\|^j.
\label{gen}
\end{equation}
Repeating the string of inequalities above and
replacing the right sum with the inductive hypothesis,
we obtain
\begin{align}
\left \|
{\bm {\hat f}}_k - {\bm f}_k
\right \|&\leq 
m\| {\bm A}\|
\| {\bm A}^{-1}\|^{2}\eta \sum_{j=0}^{k-1}\|{\bm A}^{-1}{\bm W}\|^j\nonumber\\
&\leq \frac{m\|{\bm A}\|\|{\bm A}^{-1}\|^2}{
		1 - \|{\bm A}^{-1}{\bm W}\|}\eta,
\end{align}
which completes the proof.
\begin{flushright}
\qed
\end{flushright}
\end{proof}

Recall that the stability region of an explicit 
step-truncation method is the same as the 
corresponding method without
truncation \cite{rodgers2020stability,rodgers2020adaptive}.
%
Similarly, Theorem \ref{thm:imp-st-stable} shows that 
the stability region of an implicit 
step-truncation method is identical to the corresponding 
method without truncation, though by relaxing accuracy 
we see that instead our iterates decay to within the 
solver tolerance of zero rather than converging to zero 
in an infinite time horizon.
In other words, both implicit step-truncation Euler 
and implicit step-truncation midpoint methods 
are {\em unconditionally stable}. 
However, if the tolerance of HT/TT-GMRES is 
set too large, then we could see poor stability behavior 
akin to an explicit method.

\section{Numerical results}
\label{sec:numerics}
\noindent
In this section we study the performance of the proposed implicit 
step-truncation methods in three numerical applications 
involving time-dependent PDEs. Specifically, we study the Allen-Cahn
equation \cite{montanelli2018fourth} in two spatial dimensions,
the Fokker-Planck equation \cite{risken1989} in 
four dimensions, and the nonlinear Schr\"odinger
equation \cite{PhysRevLett.86.2353} in six dimensions.

\subsection{Allen-Cahn equation}
\label{sec:ac-eqn-example}

\noindent
The Allen-Cahn equation is a reaction-diffusion equation 
that describes the process of phase separation in 
multi-component alloy systems \cite{AllenCahn1972,AllenCahn1973}.
In its simplest form, the equation has a 
cubic polynomial non-linearity (reaction term) and a diffusion term \cite{Trefethen2005}, i.e.,
\begin{equation}
\label{eqn:allen-cahn}
\frac{\partial f}{\partial t}
=
\varepsilon \Delta f
+f - f^3.
\end{equation}
\begin{figure}[t!]
\centerline{\hspace{1cm}
\footnotesize 
ST Implicit Euler \hspace{4.6cm}
ST Implicit Midpoint\hspace{0.5cm}}
\begin{center}
\includegraphics[scale=0.5]{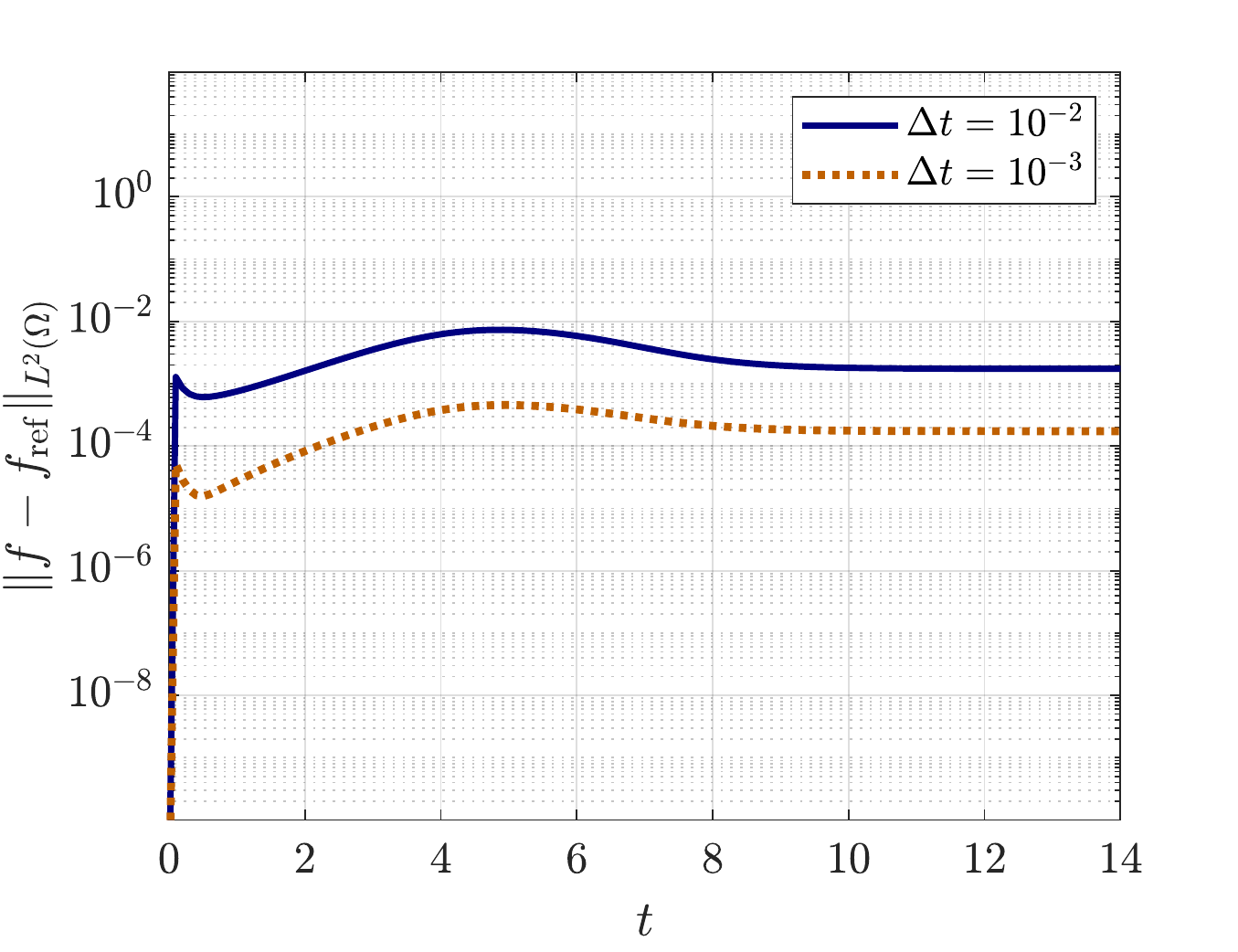}
\includegraphics[scale=0.5]{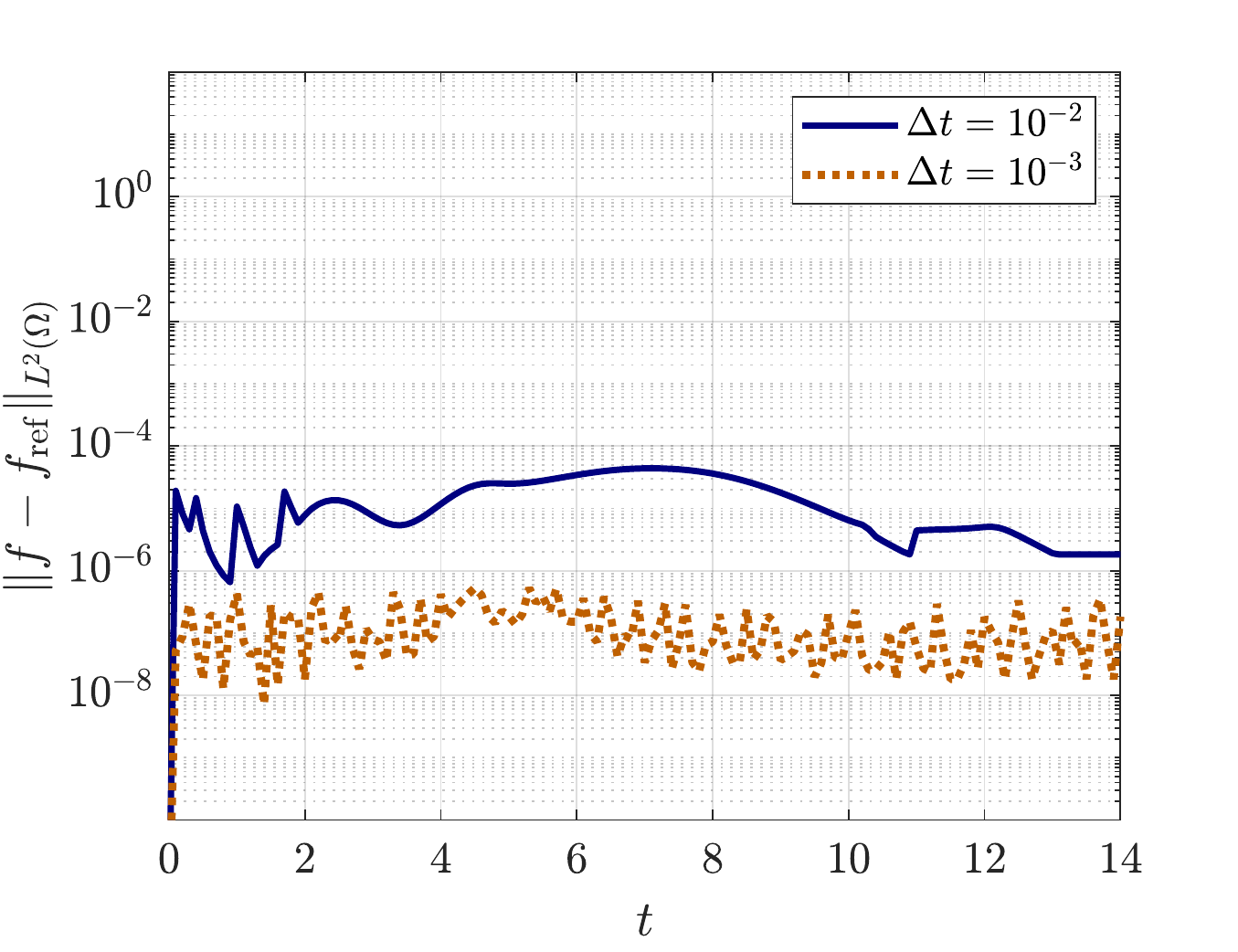}
\end{center}
\caption{ 
Error versus time for step-truncation numerical solutions
of Allen-Cahn equation \eqref{fp-lorenz-96} 
in dimension $d=2$ with initial condition  \eqref{eqn:ac-ic}. 
}
\label{fig:err-compare-ac}
\end{figure}
\begin{figure}[t]
\centerline{\hspace{1cm}
\footnotesize 
ST Implicit Euler \hspace{4.6cm}
ST Implicit Midpoint\hspace{0.5cm}}
\begin{center}
\includegraphics[scale=0.5]{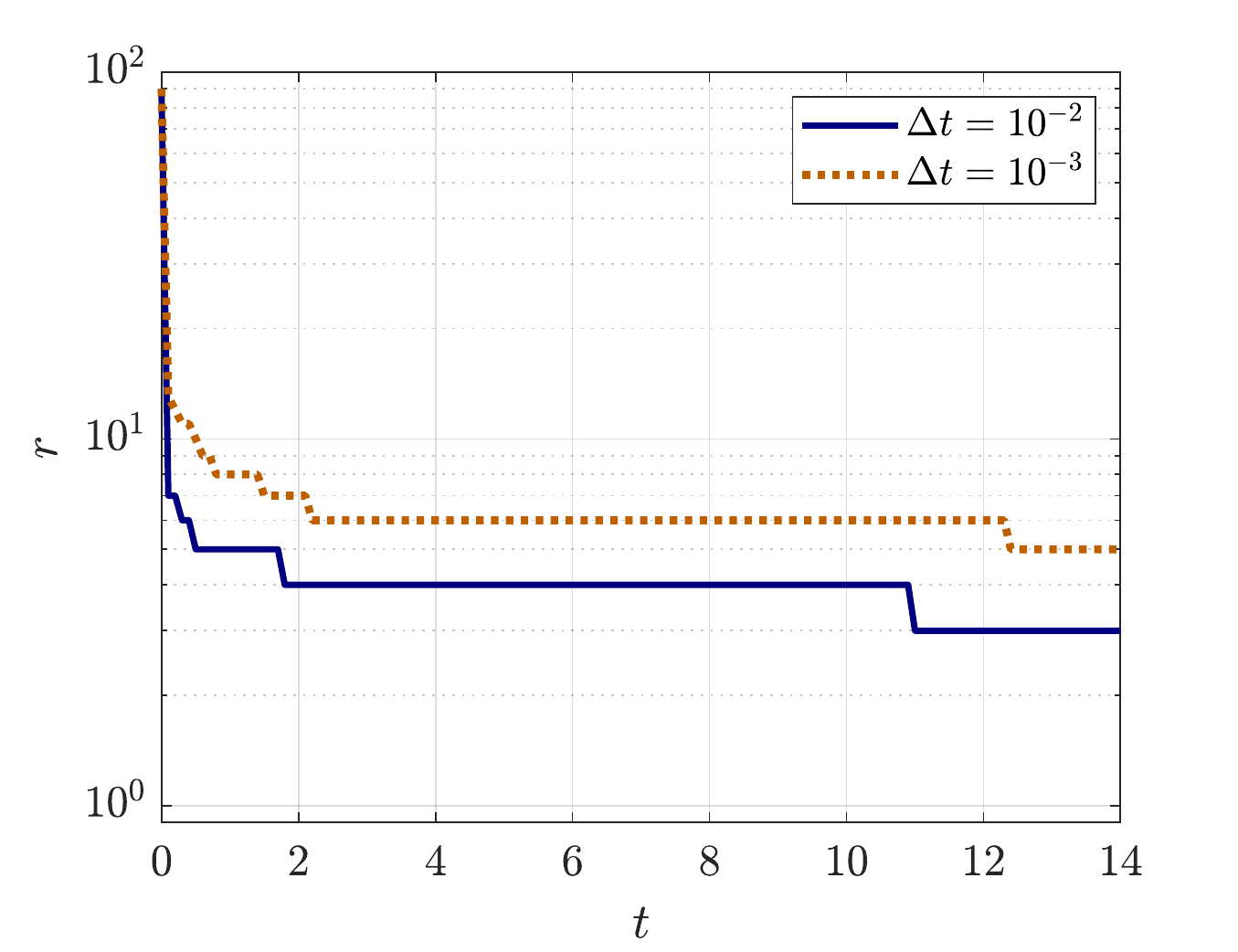}
\includegraphics[scale=0.5]{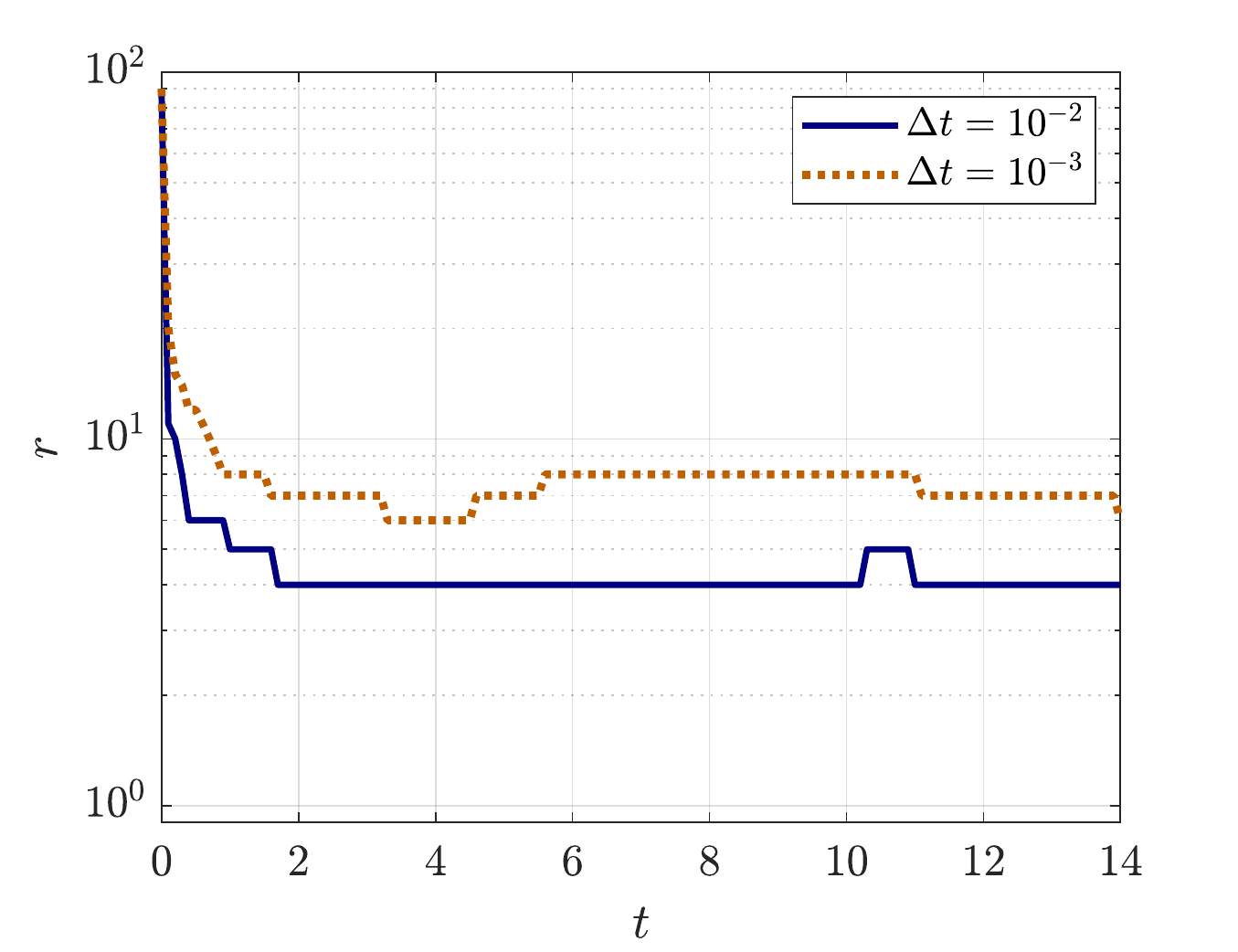}
\end{center}
\caption{ 
Rank versus time for step-truncation numerical solutions
of Allen-Cahn equation \eqref{fp-lorenz-96} 
in dimension $d=2$ with initial condition \eqref{eqn:ac-ic}.
}
\label{fig:rank-compare-ac}
\end{figure}
In our application, we set $\varepsilon = 0.1$, 
and solve \eqref{eqn:allen-cahn} on the two-dimensional
flat torus $\Omega=[0,2\pi]^2$. 
We employ a second order splitting 
\cite{farago2009operator}
method to solve the
Laplacian $\Delta f$ as a fixed rank
temporal integration and and cubic $f-f^3$ term
using our rank adaptive integration.
The initial condition is set as 
\begin{equation}
\label{eqn:ac-ic}
f_0(x,y) = 
u(x,y)- u(x,2y)+ u(3x+\pi,3y+\pi)
             -2u(4x,4y)+
             2u(5x,5y), 
\end{equation}
where
\begin{equation}
u(x,y) = \frac{\left[e^{-\tan^2(x)}+e^{-\tan^2(y)}\right]
				\sin(x)\sin(y)}{
                    1+
               e^{|\csc(-x/2)|}+e^{|\csc(-y/2)|}}.
\end{equation}
We discretize \eqref{eqn:allen-cahn} in space using the two-dimensional 
Fourier pseudospectral collocation method \cite{spectral} with $257\times 257$ points in $\Omega=[0,2\pi]^2$. 
This results in a matrix ODE in the form of \eqref{eqn:ode}.
We truncate the initial condition
to absolute and relative SVD
tolerances of $10^{-9}$, which yields an initial 
condition represented by a $257\times 257$ matrix 
of rank $90$. We also computed a benchmark solution of 
the matrix ODE using a variable step RK4 method with 
absolute tolerance set to $10^{-14}$. We denote 
the benchmark solution as $f_{\rm ref}$.
In Figure \ref{fig:err-compare-ac}
we observe the transient accuracy of our order one
and order two implicit methods. The stopping 
tolerance for inexact Newton's iterations is
set to $\varepsilon_{\rm tol}=2.2\times 10^{-8}$, while 
and HT/TT-GMRES relative
error is chosen as $\eta = 10^{-3}$. Time integration 
was halted at $t=14$. After this time, the system is close 
to steady state and the errors stay bounded near 
the final values plotted Figure \ref{fig:err-compare-ac}. 
Similarly, the rank also levels out around $t=14$.
In Figure \ref{fig:rank-compare-ac}, we plot temporal 
evolution of the rank for both the implicit 
step-truncation Euler and midpoint methods.

Due to the smoothing properties of the Laplacian, 
the high frequencies in the initial condition quickly 
decay and, correspondingly, the rank drops significantly 
within the first few time steps. Due to the
rapidly decaying rank for this problem,
we have plotted it in log scale.
In Figure \ref{fig:stiff-compare-ac}, we provide
a comparison between the rank-adaptive implicit step-truncation 
midpoint method we propose here and the rank-adaptive 
explicit step-truncation midpoint method
\begin{equation}
\label{adaptive-midpoint-method_2}
    {\bm f}_{k+1} =
    {\mathfrak{T}}_{{\bm r_3}} \left (
    {\bm f}_{k} + \Delta t
    {\mathfrak{T}}_{{\bm r_2}}\left (
    {\bm G}\left (
    {\bm f}_{k} + \frac{\Delta t}{2}
    {\mathfrak{T}}_{{\bm r_1}}(
    {\bf G}({\bm f}_{k}))
    \right )\right ) \right ), 
\end{equation}
which was recently studied in \cite{rodgers2020adaptive}.
The truncation ranks
$\bm r_1$, $\bm r_2$, and $\bm r_3$ time-dependent
and satisfy the order conditions 
\begin{equation}
\label{mid_point_trun_requirements}
\varepsilon_{{\bm r_3}} \leq A\Delta t^3, \qquad
\varepsilon_{{\bm r_2}} \leq B\Delta t^2, \qquad
\varepsilon_{{\bm r_1}}\leq G\Delta t.
\end{equation}
Such conditions guarantee that the scheme \eqref{adaptive-midpoint-method_2}
is second-order convergent (see \cite{rodgers2020adaptive}). 
Figure \ref{fig:stiff-compare-ac} shows that the explicit 
step-truncation midpoint method undergoes a numerical instability 
for $\Delta t = 10^{-3}$. Indeed it is a conditionally stable method. 
The explicit step-truncation midpoint method also has other issues. 
In particular, in the rank-adaptive setting we consider here, 
we have that in the limit $\Delta t \rightarrow 0$ the 
parameters $\varepsilon_{{\bm r_1}}$, $\varepsilon_{{\bm r_2}}$ and 
$\varepsilon_{{\bm r_3}}$ all go to zero (see equation \eqref{mid_point_trun_requirements}). This implies that 
the truncation operators may retain all singular values, 
henceforth maxing out the rank and thereby giving
up all computational gains of low-rank tensor compression. 
On the other hand, if $\Delta t$ is too large, then one we 
have stability issues as discussed above.  Indeed, we 
see both these problems with the 
explicit step-truncation  midpoint method, giving only a 
relatively narrow region of acceptable time step sizes in 
which the method is effective.

In Table \ref{tab:comparison} we provide a
comparison between explicit and implicit step-truncation 
midpoint methods in terms of computational cost 
(CPU-time on an Intel Core I9-7980XE workstation) 
and accuracy at time $t=14$. It is seen that the 
implicit step-truncation midpoint method is roughly 20 to 30 
times faster than the explicit step-truncation midpoint method 
for a comparable error\footnote{Our code was built on the backbone of 
the HTucker Matlab package \cite{kressner2014algorithm}, 
and was not optimized for speed. Faster run times for both 
explicit and implicit step-truncation methods
are possible by utilizing scalable high-performance 
algorithms such as those described in \cite{daas2020parallel}.}.
Moreover, solutions with a large time step ($>10^{-3}$) 
are impossible to achieve with the explicit step-truncation 
method due to time step restrictions associated with 
conditional stability.

\begin{figure}[t!]
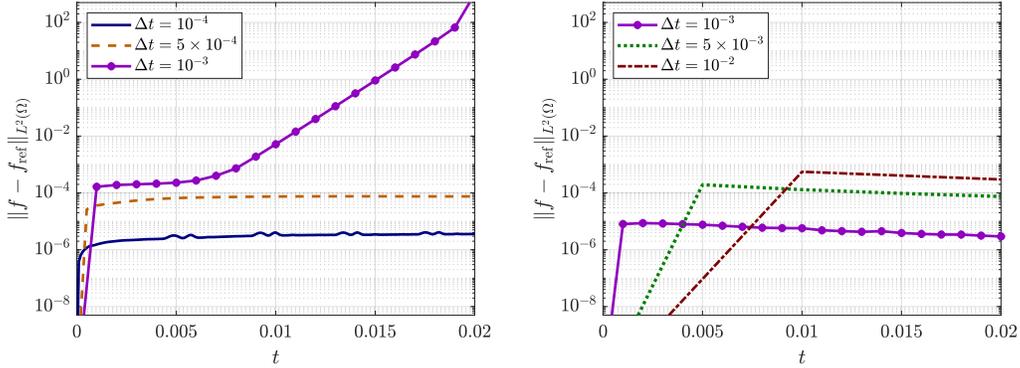

\centerline{\hspace{1cm}
\footnotesize 
ST Explicit Midpoint \hspace{4.0cm}
ST Implicit Midpoint\hspace{1cm}}
\centerline{\line(1,0){420}}
\begin{center}
\includegraphics[scale=0.5]{exp_midpoint_stiff_err}
\includegraphics[scale=0.5]{imp_midpoint_stiff_err}
\end{center}
\caption{ 
Allen-Cahn equation \eqref{eqn:allen-cahn}. Comparison 
between the $L^2(\Omega)$ errors of explicit and implicit step-truncation 
midpoint methods for different $\Delta t$. 
}
\label{fig:stiff-compare-ac}
\end{figure}

\begin{table}
\begin{tabular}{|c|c|c|}
 \hline
 \multicolumn{3}{|c|}{ST Explicit Midpoint} \\
 \hline
 $\Delta t$
 &Runtime (seconds) 
 &$\|{\bm f} - {\bm f}_{\rm {ref}}\|$\\
 \hline
 $1.0\times 10^{-3}$ & Did not finish & Unstable\\
 $5.0\times 10^{-4}$ & $2.2946\times 10^{2}$ & $6.0713\times10^{-3}$\\
 $2.5\times 10^{-4}$ & $4.7828\times 10^{2}$ & $5.2628\times10^{-4}$\\
 $1.0\times 10^{-4}$ & $1.2619\times 10^{3}$ & $5.3648\times10^{-5}$\\
 $5.0\times 10^{-5}$ & $2.7354\times 10^{3}$ & $1.1723\times10^{-5}$\\
 \hline
\end{tabular}
\begin{tabular}{|c|c|c|}
 \hline
 \multicolumn{3}{|c|}{ST Implicit Midpoint} \\
 \hline
 $\Delta t$
 & Runtime (seconds)  
 &$\|{\bm f} - {\bm f}_{\rm {ref}}\|$\\
 \hline
 $1.0\times 10^{-1}$ & $5.3097$ & $2.5652\times10^{-2}$\\
 $5.0\times 10^{-2}$ & $1.0495\times 10^{1}$ & $7.7248\times10^{-3}$\\
 $2.5\times 10^{-2}$ & $1.8987\times 10^{1}$ & $2.0977\times10^{-3}$\\
 $1.0\times 10^{-2}$ & $3.8025\times 10^{1}$ & $6.7477\times10^{-6}$\\
 $5.0\times 10^{-3}$ & $7.6183\times 10^{1}$ & $3.6012\times10^{-6}$\\
 \hline
\end{tabular}
\caption{Allen-Cahn equation \eqref{eqn:allen-cahn}. Comparison between 
explicit and implicit step-truncation midpoint methods in terms of 
computational cost (CPU-time on an Intel Core I9-7980XE workstation) 
and accuracy at final time ($t=14$). It is seen that the 
implicit step-truncation midpoint method is roughly 20 to 30 
times faster than the explicit step-truncation midpoint method 
for a comparable error.}
\label{tab:comparison}
\end{table}

\subsection{Fokker-Planck equation}

\begin{figure}[!htb]
\centering
\centerline{\hspace{1.5cm}
\footnotesize 
ST implicit Euler \hspace{1.8cm}
ST implicit midpoint    \hspace{2.2cm}
Reference}
\centerline{\line(1,0){420}}
\vspace{0.1cm}
{\rotatebox{90}{\hspace{1.6cm}\rotatebox{-90}{
\hspace{0.1cm}
\footnotesize$t=0$\hspace{0.9cm}}}}
\includegraphics[scale=0.27]{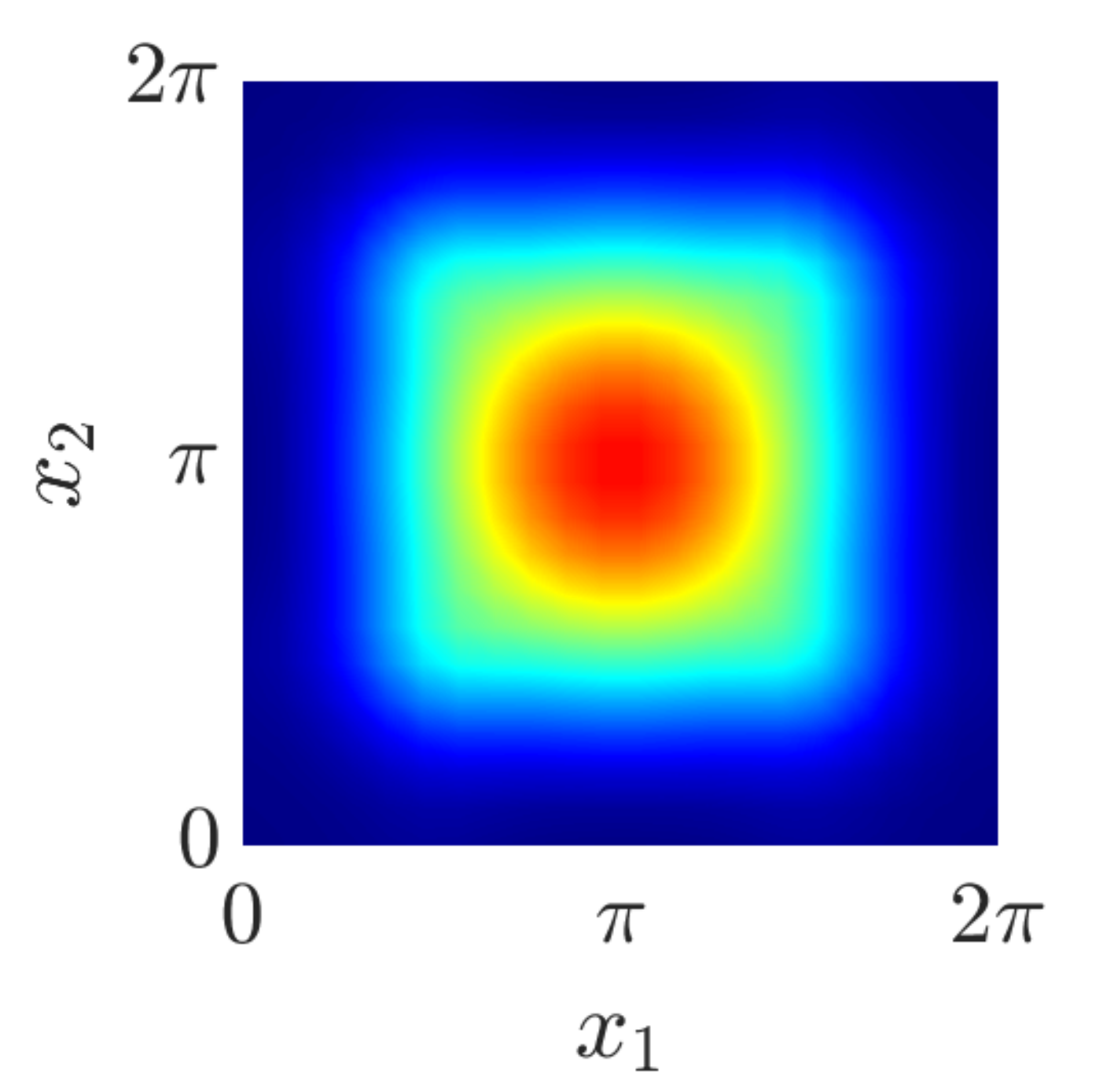}
\includegraphics[scale=0.27]{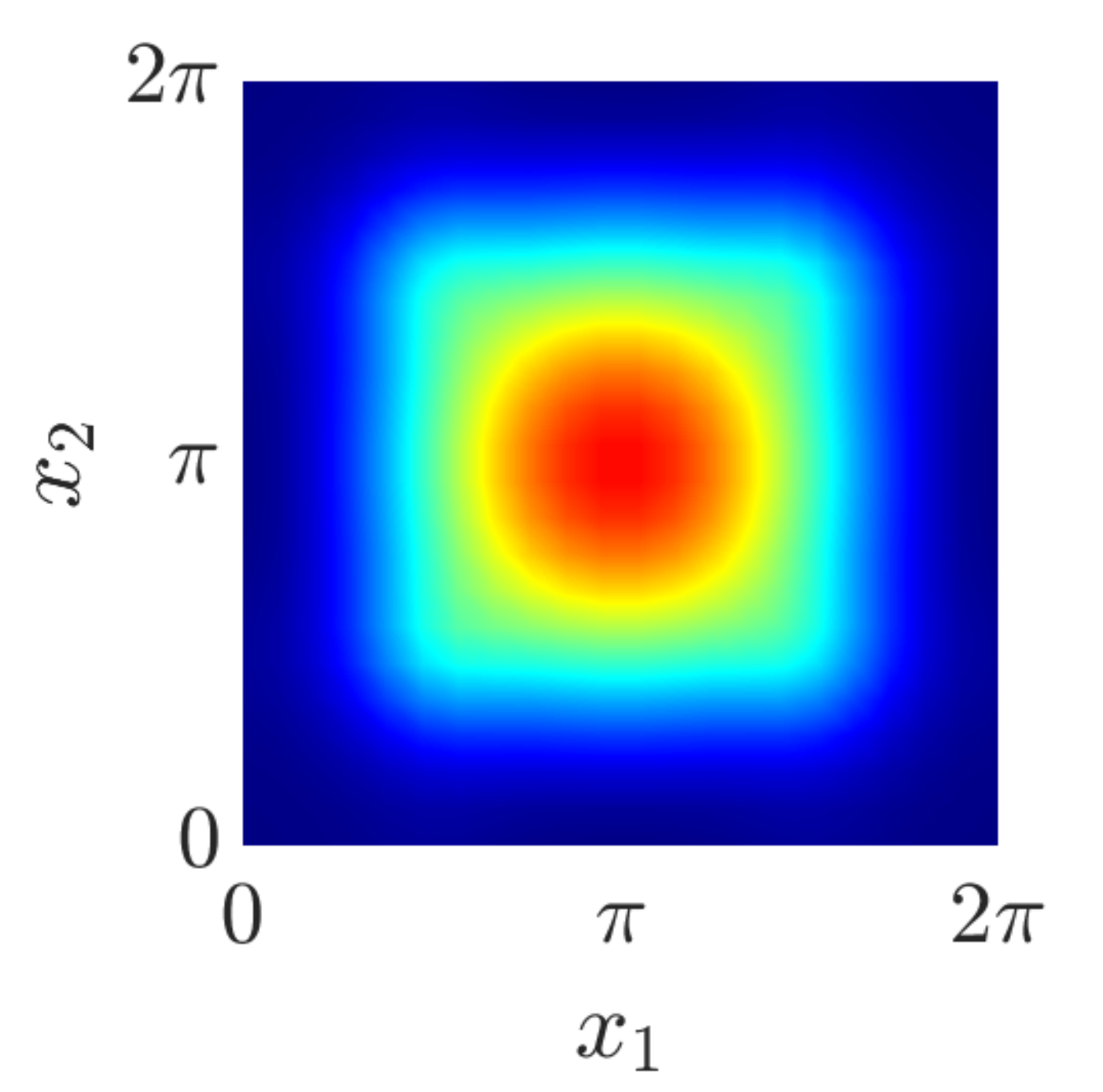}
\includegraphics[scale=0.27]{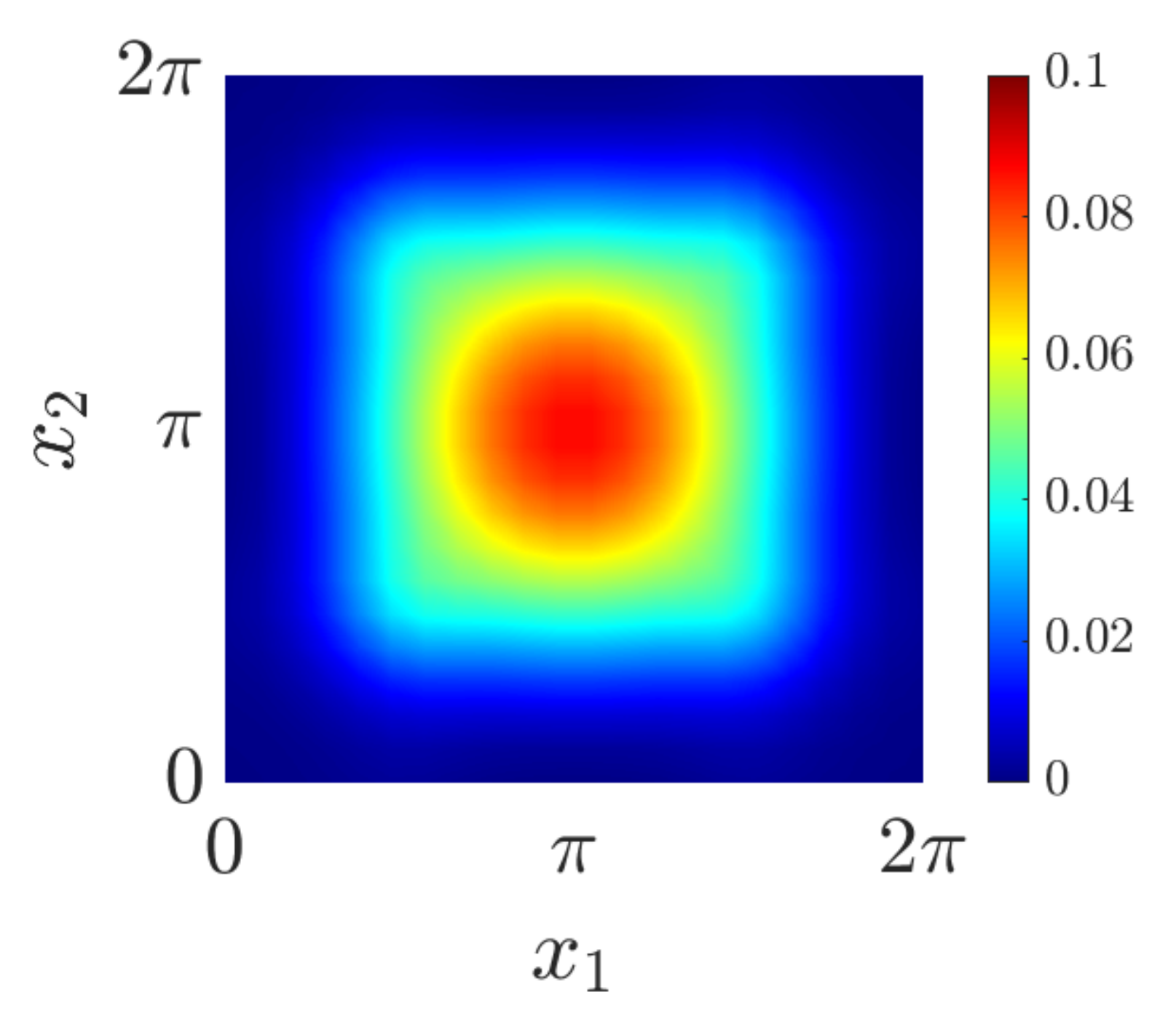}\\
{\rotatebox{90}{\hspace{1.6cm}\rotatebox{-90}{
\footnotesize$t=0.1$\hspace{0.7cm}}}}
\includegraphics[scale=0.27]{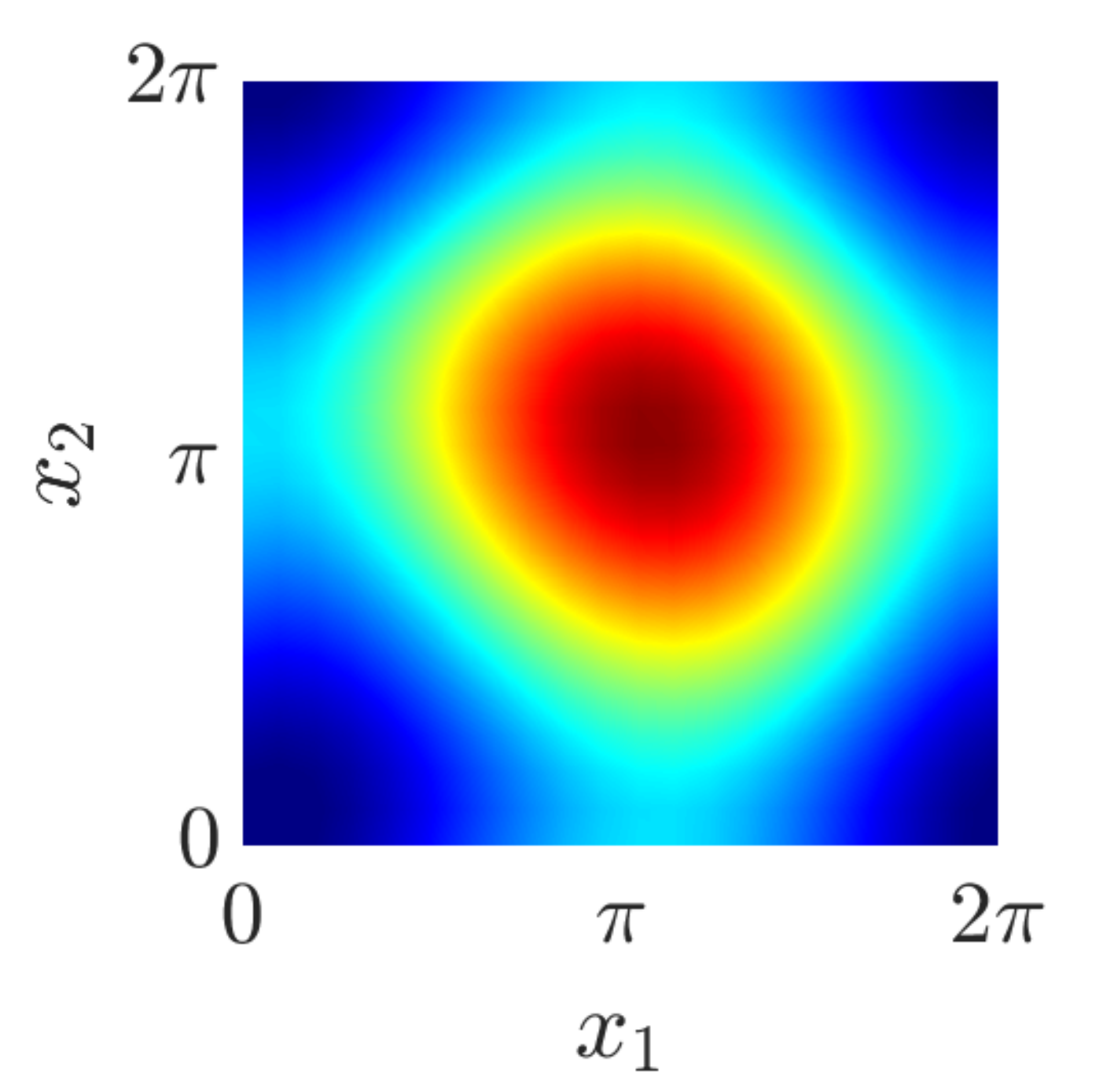}
\includegraphics[scale=0.27]{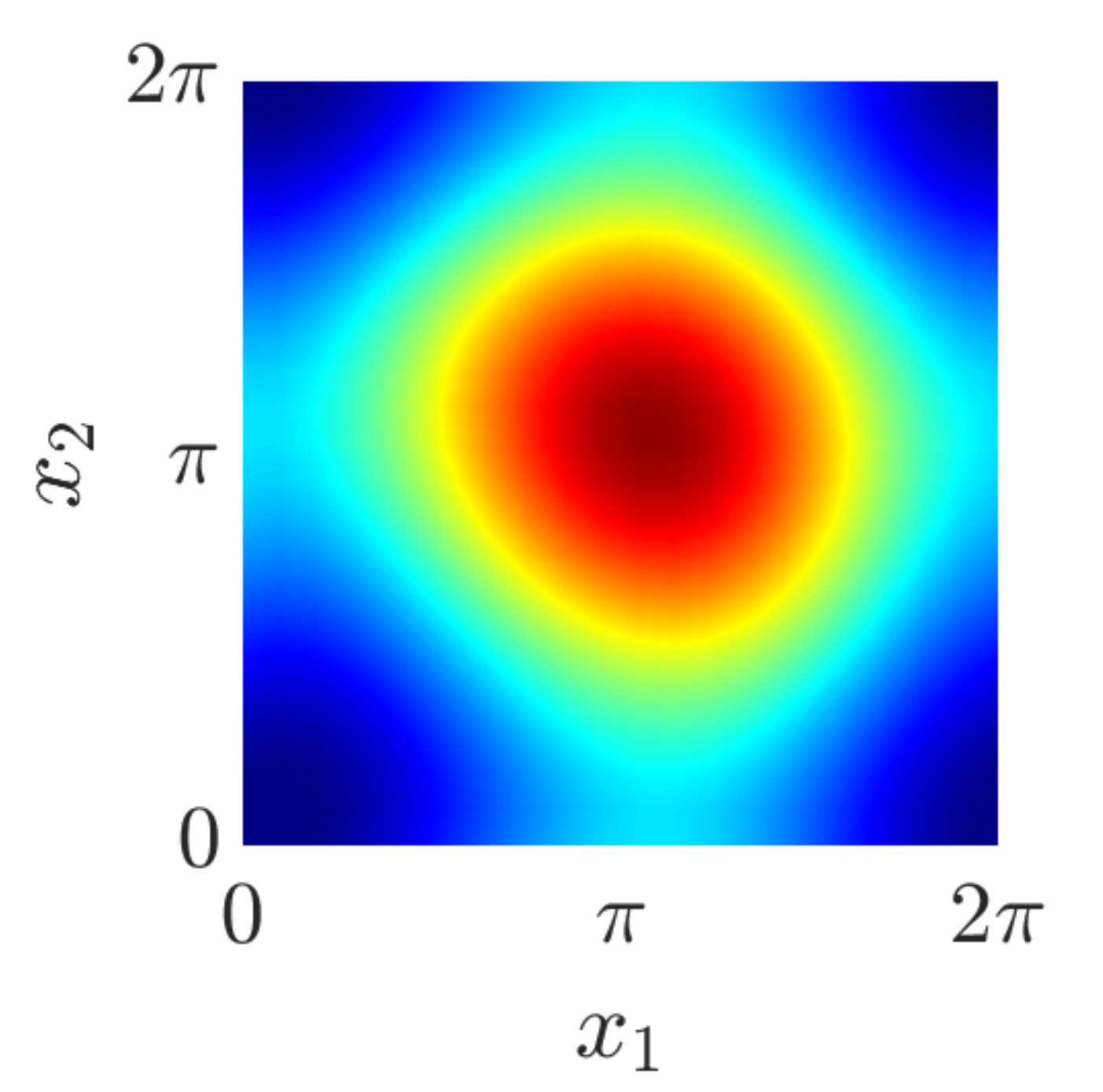}
\includegraphics[scale=0.27]{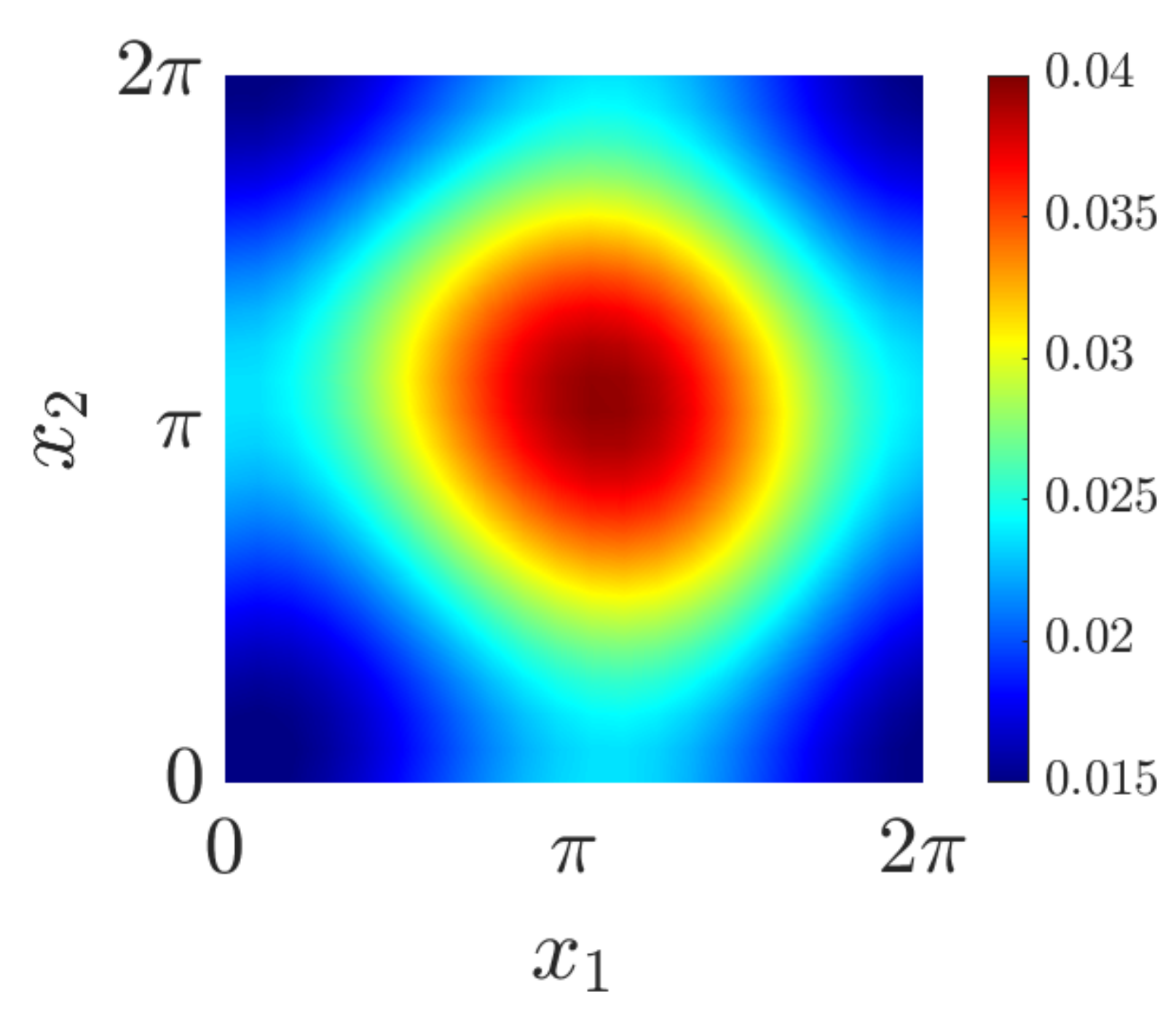}\\
{\rotatebox{90}{\hspace{1.6cm}\rotatebox{-90}{
\footnotesize$t=10$\hspace{0.7cm}}}}
\includegraphics[scale=0.27]{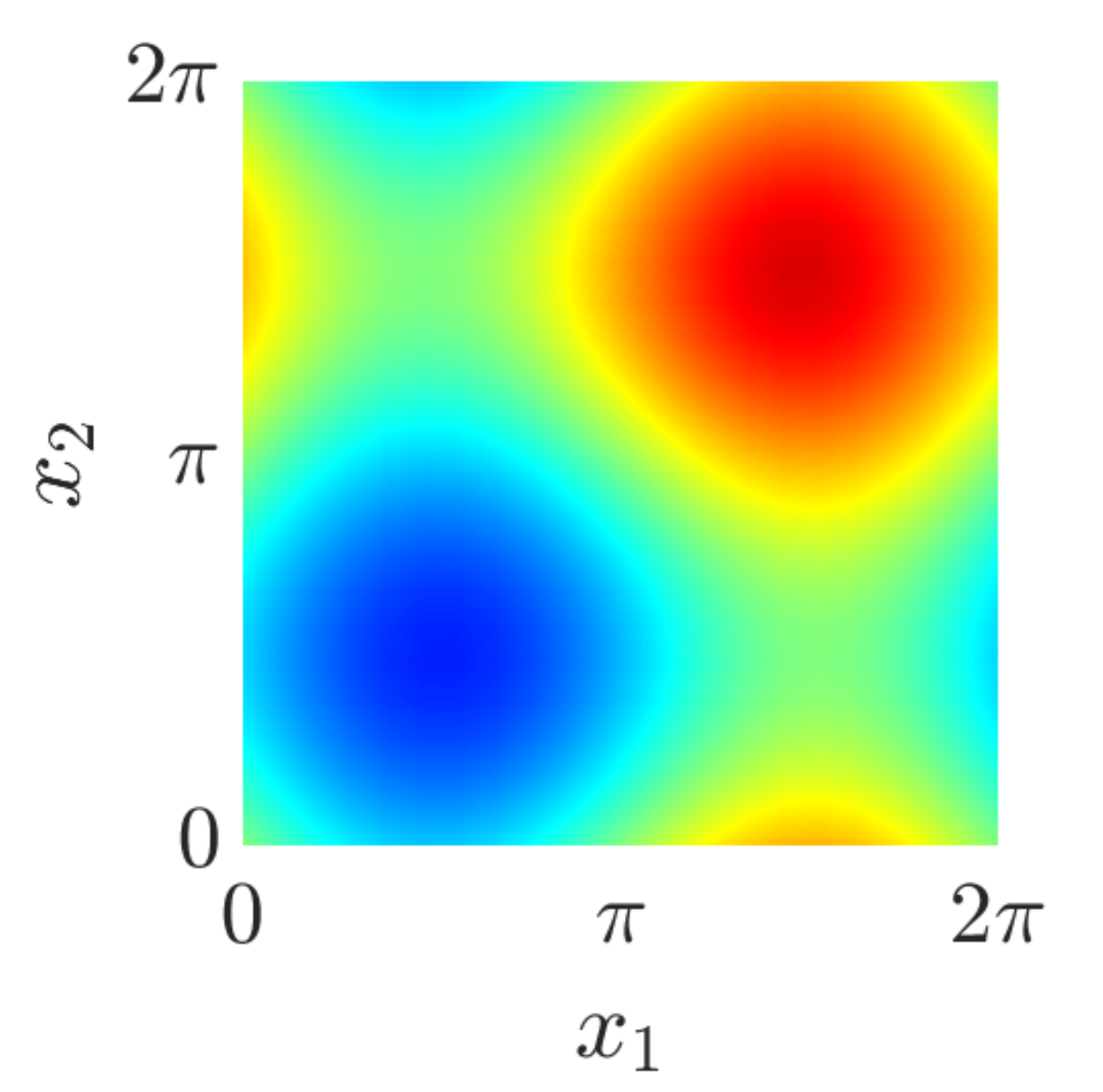}
\includegraphics[scale=0.27]{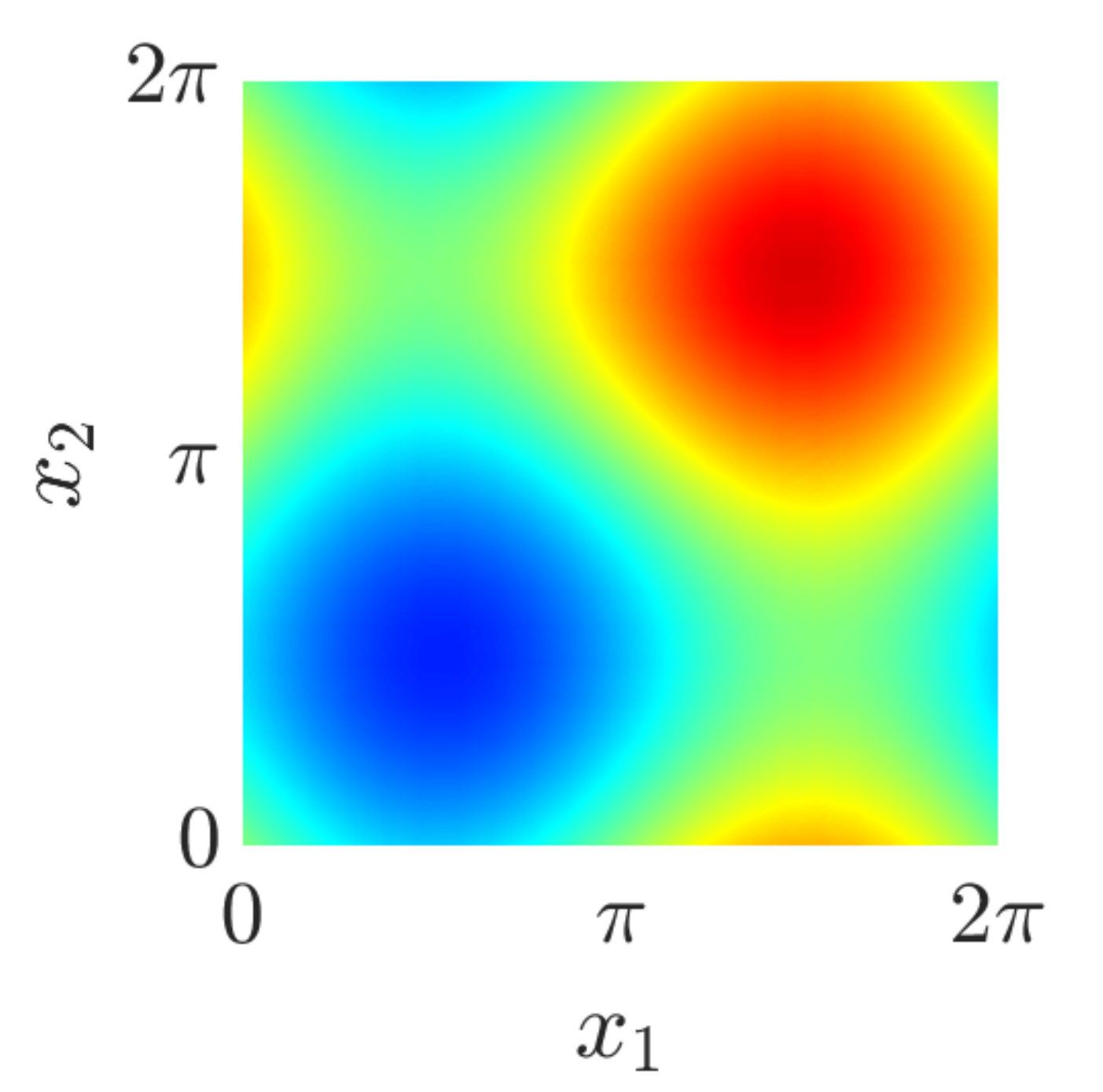}
\includegraphics[scale=0.27]{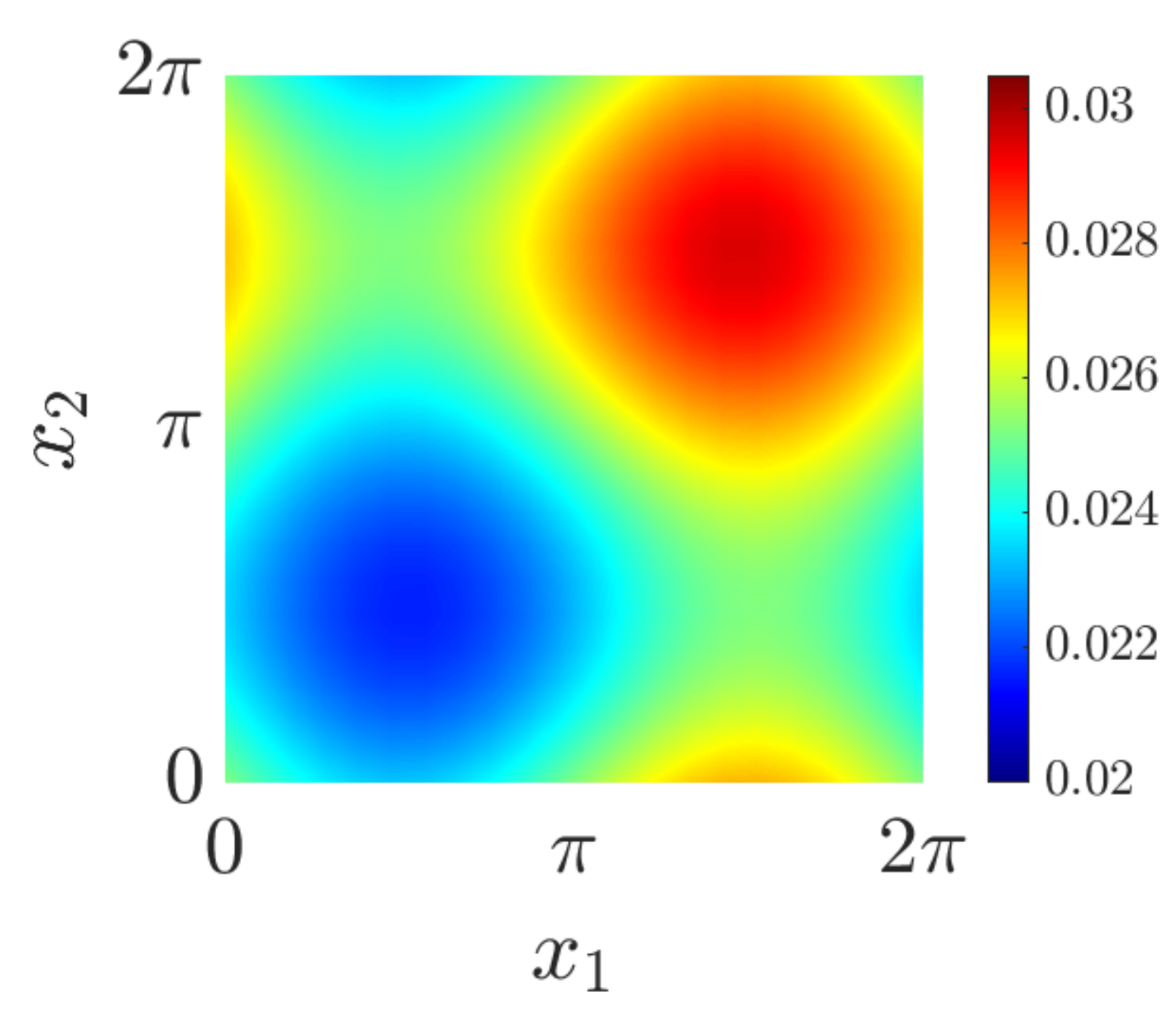}\\
\caption{
Marginal probability density function \eqref{marginal}
obtained by integrating numerically the 
Fokker--Planck equation \eqref{fp-lorenz-96} 
in dimension $d=4$ with $\sigma = 5$ and
initial condition \eqref{ic4d} with two methods:   
i) rank-adaptive implicit step-truncation Euler and 
ii) rank-adaptive implicit step-truncation midpoint.
The reference solution is a variable time step
RK4 method with absolute tolerance of $10^{-14}$.
These solutions are computed  on a grid with 
$20\times 20\times 20\times 20$ interior points 
(evenly spaced).
The steady state is determined for this 
computation by halting execution 
when $\left\|\partial f_{\text{ref}}/\partial t\right\|_2$ 
is below a numerical threshold of $10^{-8}$. This
happens at approximately $t \approx 10$ for the 
initial condition \eqref{ic4d}.}
\label{fig:time-snapshot-plot-fp}
\end{figure}

\begin{figure}[!htb]
\centerline{\hspace{1cm}
\footnotesize 
Transient Error \hspace{5.3cm}
Maximal Rank\hspace{1.0cm}}
\centerline{\line(1,0){420}}
\begin{center}
\includegraphics[scale=0.5]{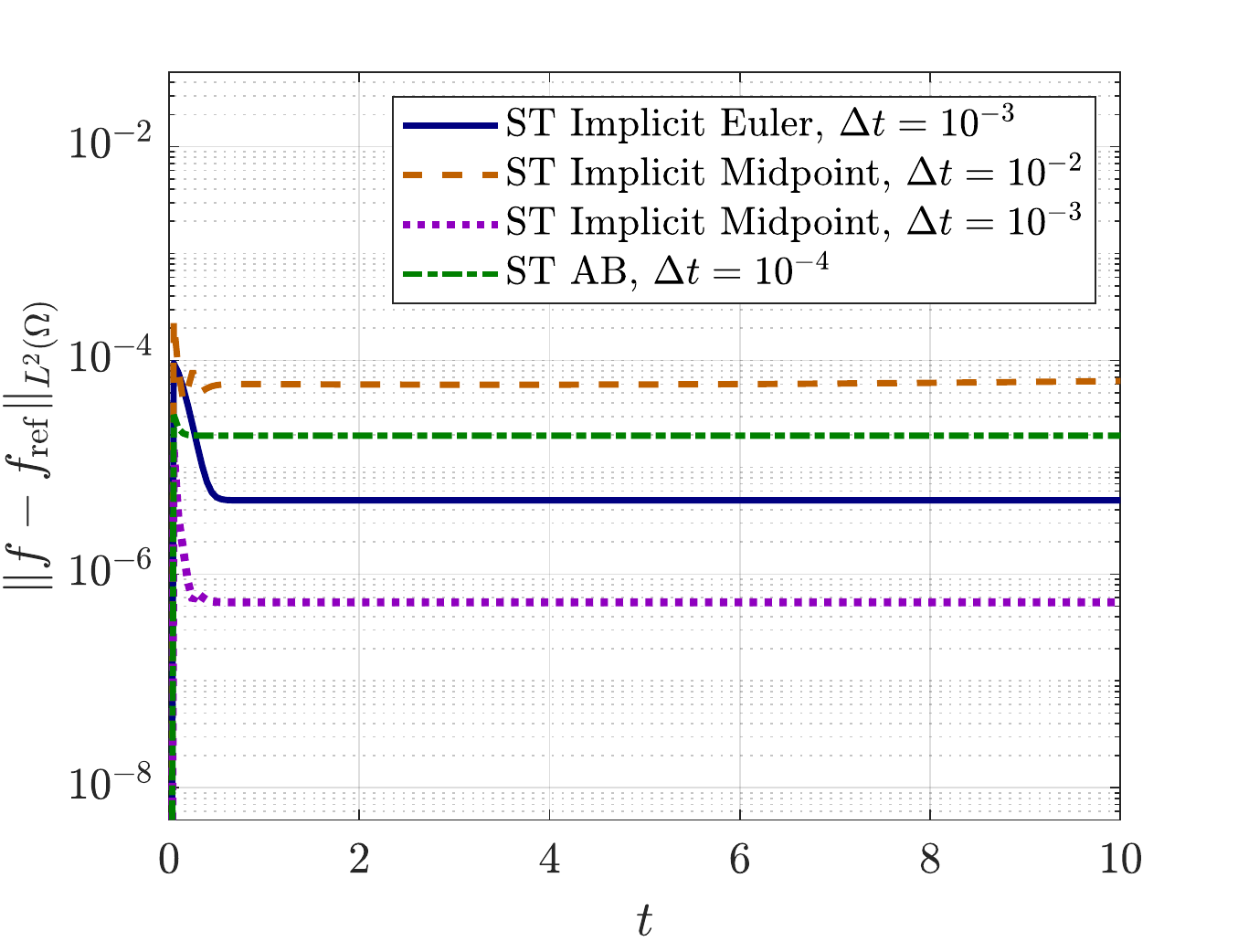}
\includegraphics[scale=0.5]{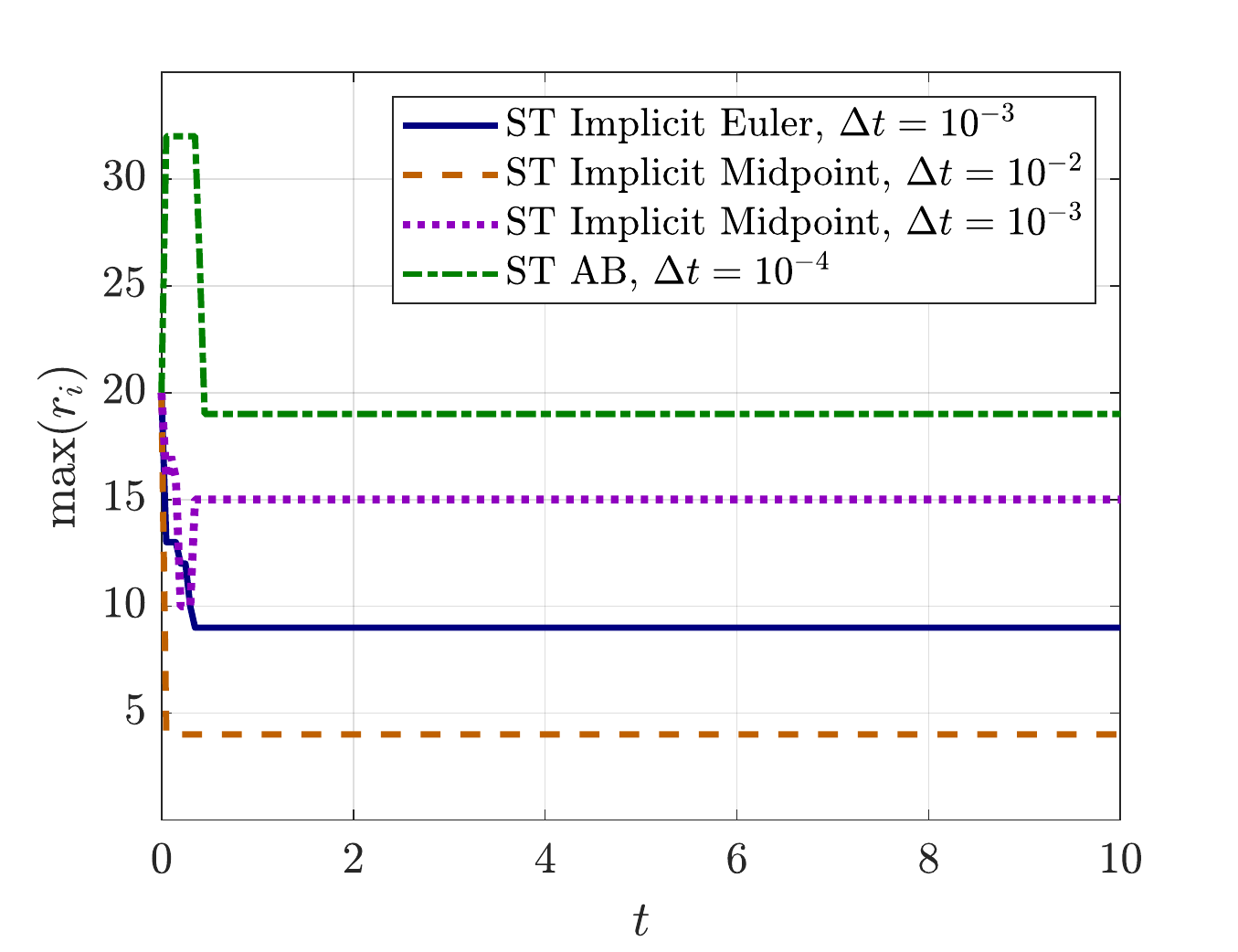}
\end{center}
\caption{$L^2(\Omega)$ error 
and rank versus time  for
numerical solutions of 
Fokker--Planck equation \eqref{fp-lorenz-96} 
in dimension $d=4$ with
initial condition \eqref{ic4d}.
The rank plotted here is the largest rank for all tensors
being used to represent the solution in HT format. 
Rank of the reference solution is in HT format.
}
\label{fig:error-compare-fp}
\end{figure}

\begin{figure}[!htb]
\centerline{ 
\includegraphics[scale=0.5]{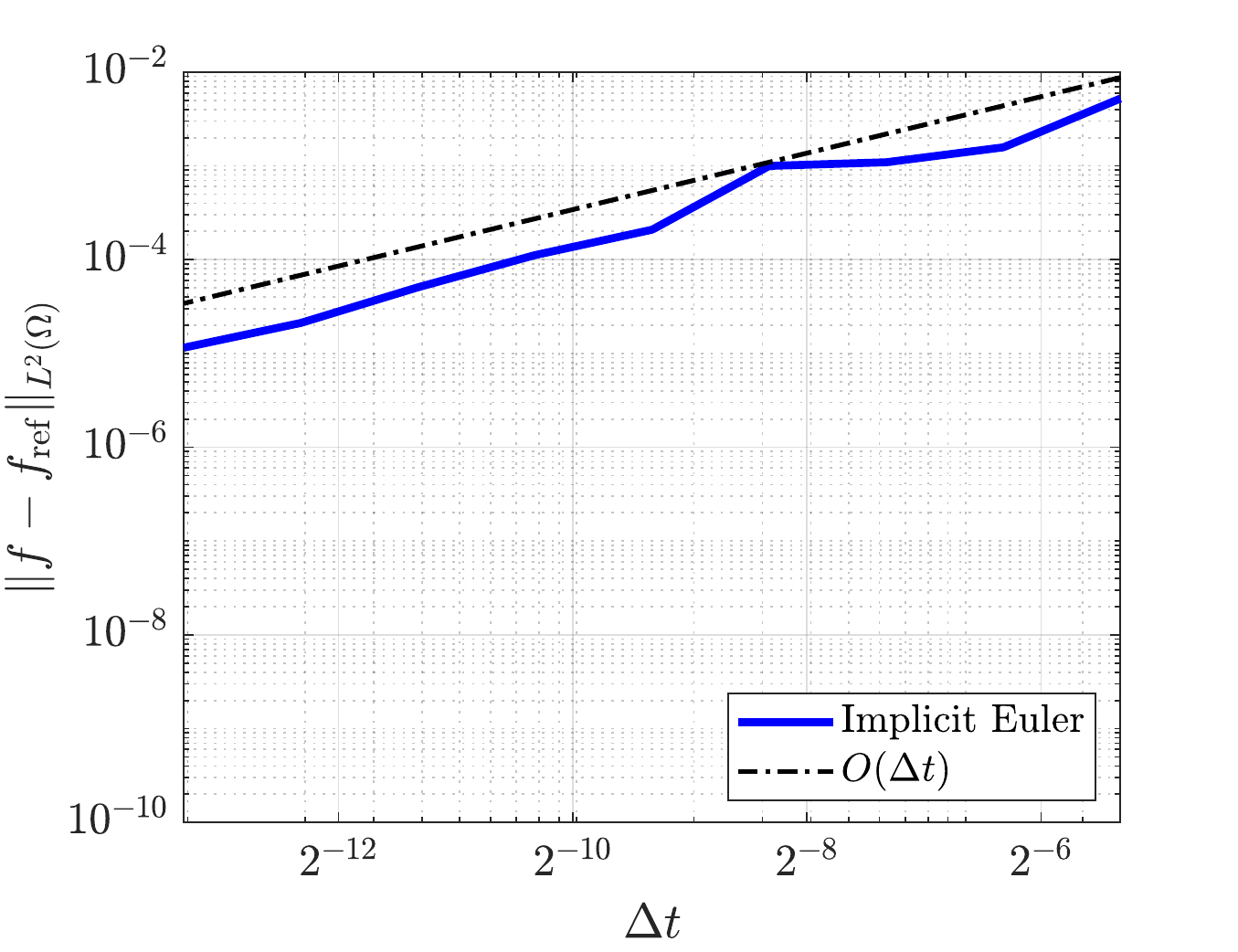}
\includegraphics[scale=0.5]{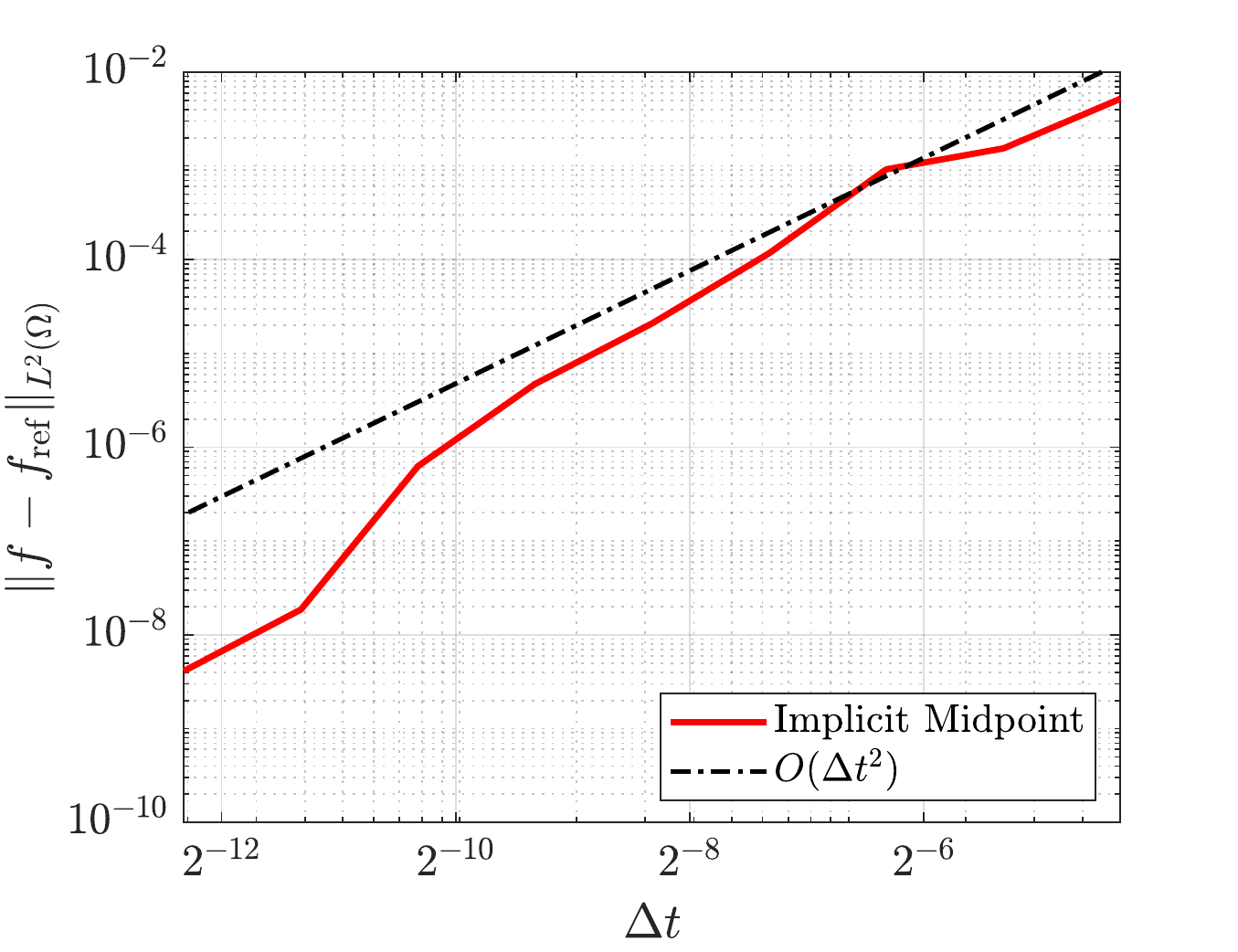}
}
\caption{$L^2(\Omega)$ errors at $t=0.1$ for the implicit rank-adaptive 
step-truncation implicit Euler and midpoint methods versus $\Delta t$. 
The reference solution of \eqref{fp-lorenz-96} if computed 
using a variable time step RK4 method with absolute tolerance of $10^{-14}$.
}
\label{fig:global-err-fp}
\end{figure}

\noindent
Consider the Fokker-Planck equation
\begin{equation}
\label{fp-lorenz-96}
\frac{{\partial} f({\bm x},t) }{\partial t}=
-\sum_{i=1}^{d}
\frac{\partial}{\partial x_i}\left(
\mu_i({\bm x}) f({\bm x},t) \right)
+ \frac{\sigma^2}{2} 
\sum_{i=1}^{d}\frac{\partial^2 f({\bm x},t)}{\partial x_i^2}
\end{equation}
on a four-dimensional ($d=4$) flat torus $\Omega=[0,2\pi]^4$. 
The components of the drift are chosen as 
\begin{equation}
\label{drift-field}
\mu_i({\bm x}) = 
(\gamma(x_{i+1}) - \gamma(x_{i-2}))\xi(x_{i-1}) -
\phi(x_{i}), \qquad i =1,\ldots, d,
\end{equation}
where
$\gamma(x)=\sin(x)$, $\xi(x)=\exp(\sin(x))+1$,
and $\phi(x)=\cos(x)$ are $2\pi$-periodic functions. 
In \eqref{drift-field} we set $x_{i+d}=x_i$. For this particular
drift field, the right side of \eqref{fp-lorenz-96} can be split
into a component tangential to the tensor manifold ${\cal H}_{\bm r}$ and 
a component that is non-tangential as
\begin{align}
\frac{{\partial} f({\bm x},t) }{\partial t}=
\sum_{i=1}^{d}
\underbrace{
\left (
-\gamma(x_{i+1})\xi(x_{i-1})
\frac{\partial f({\bm x},t)}{\partial x_i}
+
\gamma(x_{i-2})\xi(x_{i-1})
\frac{\partial f({\bm x},t)}{\partial x_i}
\right )}_{\text{Not tangential}} 
+ \nonumber\\
\underbrace{
\left (
\frac{\partial}{\partial x_i}
\phi(x_{i})f({\bm x},t)+
\frac{\sigma^2}{2} 
\frac{\partial^2 f({\bm x},t)}{\partial x_i^2}
\right )
}_{\text{tangential}}.
\label{fp-lorenz-96-split}
\end{align}
We solve \eqref{fp-lorenz-96-split} using an operator splitting 
method. To this end, we notice that there are $3d$ many terms 
in the summation above, and therefore we first solve the first $d$ 
time dependent PDEs which are tangential to 
the tensor manifold ${\cal H}_{\bm r}$, i.e.,
\begin{align}
\frac{\partial g_{i}}{\partial t} =
\frac{\partial }{\partial x_i}\phi(x_i)g_i
+ \frac{\sigma^2}{2}
\frac{\partial^2 g_i}{\partial x_i^2}
,\quad 
\ i=1,\dots,d.
\end{align}
Then we solve the non-tangential equations
in two batches,
\begin{align}
\frac{\partial u_{j}}{\partial t} &=
\gamma(x_{i+1})\xi(x_{i-1})
\frac{\partial u_{j}}{\partial x_i},
\quad\ 
\ j =1,\dots,d\\
\frac{\partial u_{k}}{\partial t} &=
\gamma(x_{i-2})\xi(x_{i-1})
\frac{\partial u_{k}}{\partial x_i},
\quad
\ k = 2,\dots,2d.
\end{align}
This yields the first-order (Lie-Trotter) approximation
$f({\bm x},\Delta t) = u_{2d}({\bm x},\Delta t)
+O(\Delta t^2)$. We also use these same list of 
PDEs for the second-order (Strang) splitting integrator.
For each time step in both first- and second-order splitting 
methods, we terminate the HT/TT-GMRES iterations by setting 
the stopping tolerance $\varepsilon_{\text{tol}}=10^{-9}$.
We set the initial probability density function (PDF) as
\begin{equation}
 f_0(x_1,x_2,x_3,x_4) = \frac{1}{F_0}\sum_{j=1}^M
 \left ( \ \prod_{i=1}^4
                    \frac{\sin((2j-1)x_i-\pi/2)+1}{2^{2(j-1)}} +
                    \prod_{i=1}^4
                    \frac{\exp(\cos(2jx_i+\pi))}{2^{2j-1}}
                    \right ),
                    \label{ic4d}
\end{equation}
where $F_0$ is a normalization constant. This
gives an HTucker tensor with rank bounded by $2M$.
We set $M=10$ to give ranks bounded by $20$.
We discretize \eqref{fp-lorenz-96}-\eqref{ic4d} in $\Omega$ 
with the Fourier pseudospectral collocation method \cite{spectral}
on a tensor product grid with $N=20$ evenly-spaced points 
along each coordinate $x_i$, giving the total 
number of points $(N+1)^4 = 194481$. This number corresponds 
to the number of entries in the tensor ${\bm f}(t)$ appearing in
equation \eqref{eqn:ode}. Also, we set $\sigma = 5$ 
in \eqref{fp-lorenz-96}.

In Figure \eqref{fig:time-snapshot-plot-fp}
we compare a few time snapshots of the
marginal PDF
\begin{equation}
\label{marginal}
f_{12}(x_1,x_2) = \int_0^{2\pi}\int_0^{2\pi}
	f(x_1,x_2,x_3,x_4)dx_3dx_4,
\end{equation}
we obtained with the rank-adaptive implicit 
step-truncation Euler and midpoint methods, 
as well as the reference marginal PDF. 
The solution very quickly relaxes to nearly
uniform by $t=0.1$, then slowly
rises to its steady state distribution
by $t=10$.

In Figure \ref{fig:error-compare-fp} we study 
accuracy and rank of the proposed implicit step-truncation 
methods in comparison with the rank-adaptive explicit 
Adams-Bashforth (AB) method of order 2 (see \cite{rodgers2020adaptive}). 
The implicit step-truncation methods use a time 
step size of $\Delta t = 10^{-3}$, while step-truncation AB2 
uses a step size of $\Delta t = 10^{-4}$.
The highest error and lowest rank come from the
implicit step-truncation midpoint method with $\Delta t=10^{-2}$.
The highest rank and second highest error
go to the step-truncation AB2 method, which 
runs with time step size $10^{-4}$ for stability. 
This causes a penalty in the rank,
since the as the time step is made small, the
rank must increase to maintain convergence order 
(see, e.g., \eqref{mid_point_trun_requirements} for similar 
conditions on explicit step-truncation midpoint method)
The implicit step-truncation midpoint method performs the best,
with error of approximately $10^{-6}$ and rank
lower than the step-truncation AB2 method at steady state.
Overall, the proposed implicit step-truncation methods perform
extremely well on linear problems of this form,
especially when the right hand side is explicitly
written as a sum of tensor products of
one dimensional operators.
In Figure \eqref{fig:global-err-fp} we show
a plot of the convergence rate of implicit step-truncation 
Euler and midpoint methods. For this figure, we set $\sigma=2$.
The convergence rates are order one and order two
respectively, verifying 
Theorem \ref{thm:convergence}.

\subsection{Nonlinear Schr\"odinger equation}
\begin{figure}[t]
\centerline{\hspace{1cm}
\footnotesize 
$\theta=0.1$
\hspace{6cm}
$\theta=0.01$\hspace{1.0cm}}
\centering
\includegraphics[scale=0.5]{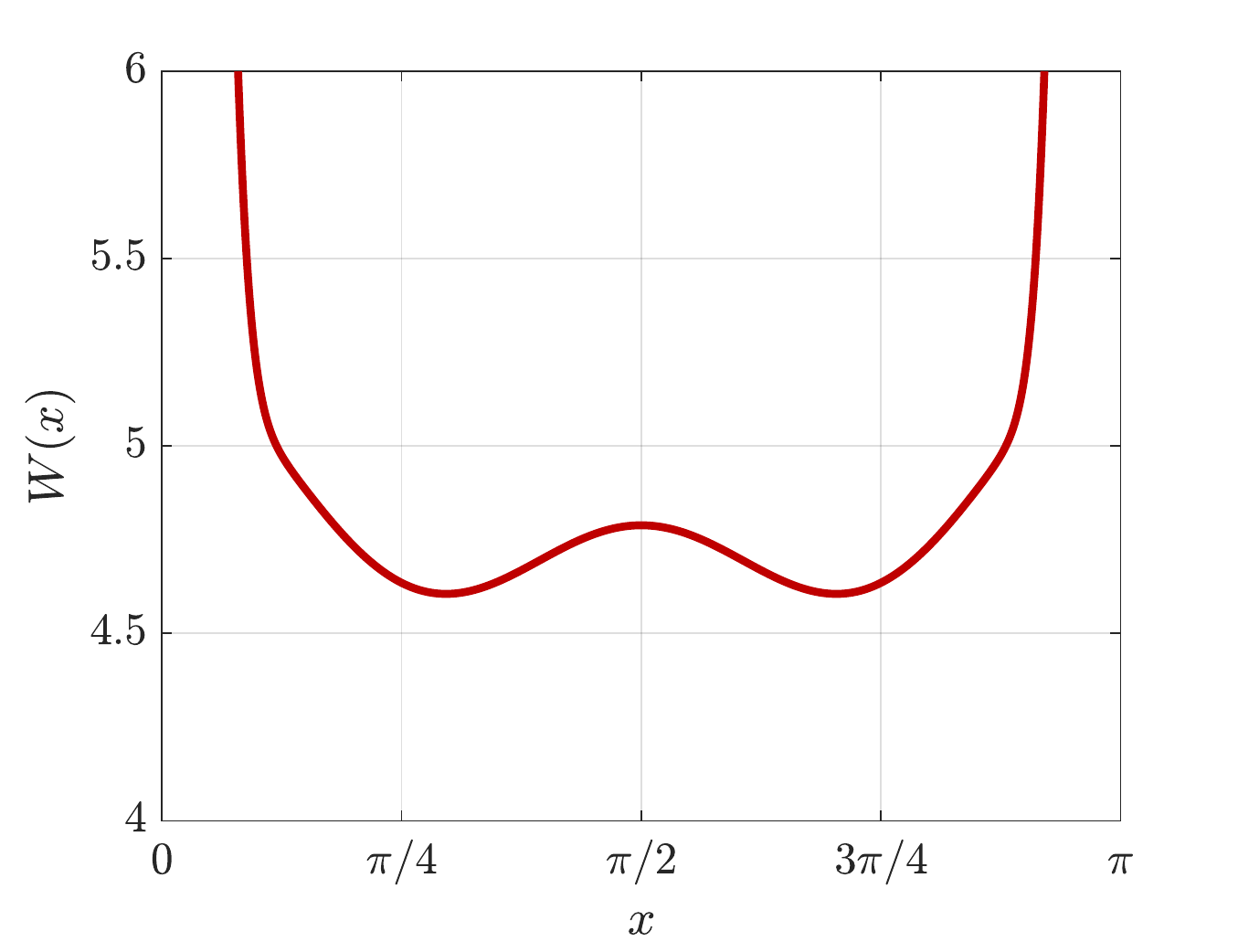}
\includegraphics[scale=0.5]{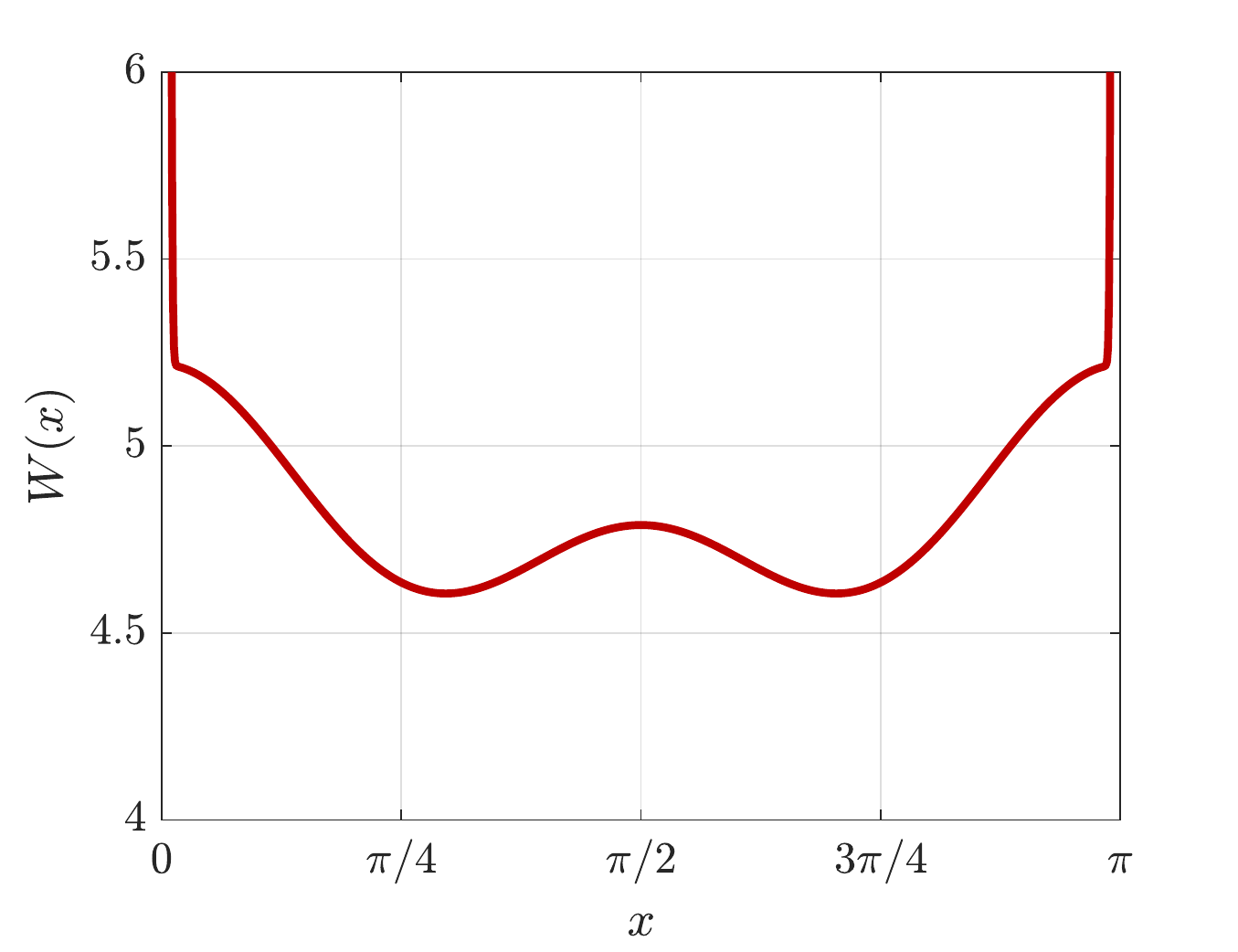}
\caption{Double-well potential \eqref{eqn:pot-well}-\eqref{eqn:mollifier} 
for different values of $\theta$.
It is seen that as $\theta\rightarrow 0$, the potential
barrier at $x=0$ and $x=\pi$ becomes infinitely high. 
This is identical to the well-known homogeneous boundary conditions 
for particles trapped in a box.
}
\label{fig:wells-schrodinger}
\end{figure}

\begin{figure}[t]
\centering
\centerline{\hspace{2.3cm}
\footnotesize 
$t=0$ \hspace{2.75cm}
$t=2.5$    \hspace{2.75cm}
$t=5$}
\centerline{\line(1,0){420}}
\vspace{0.1cm}
{\rotatebox{90}{\hspace{2.2cm}\rotatebox{-90}{
\hspace{0.1cm}
\footnotesize$p(x_1,x_2,t)$\hspace{0.6cm}}}}
\includegraphics[scale=0.27]{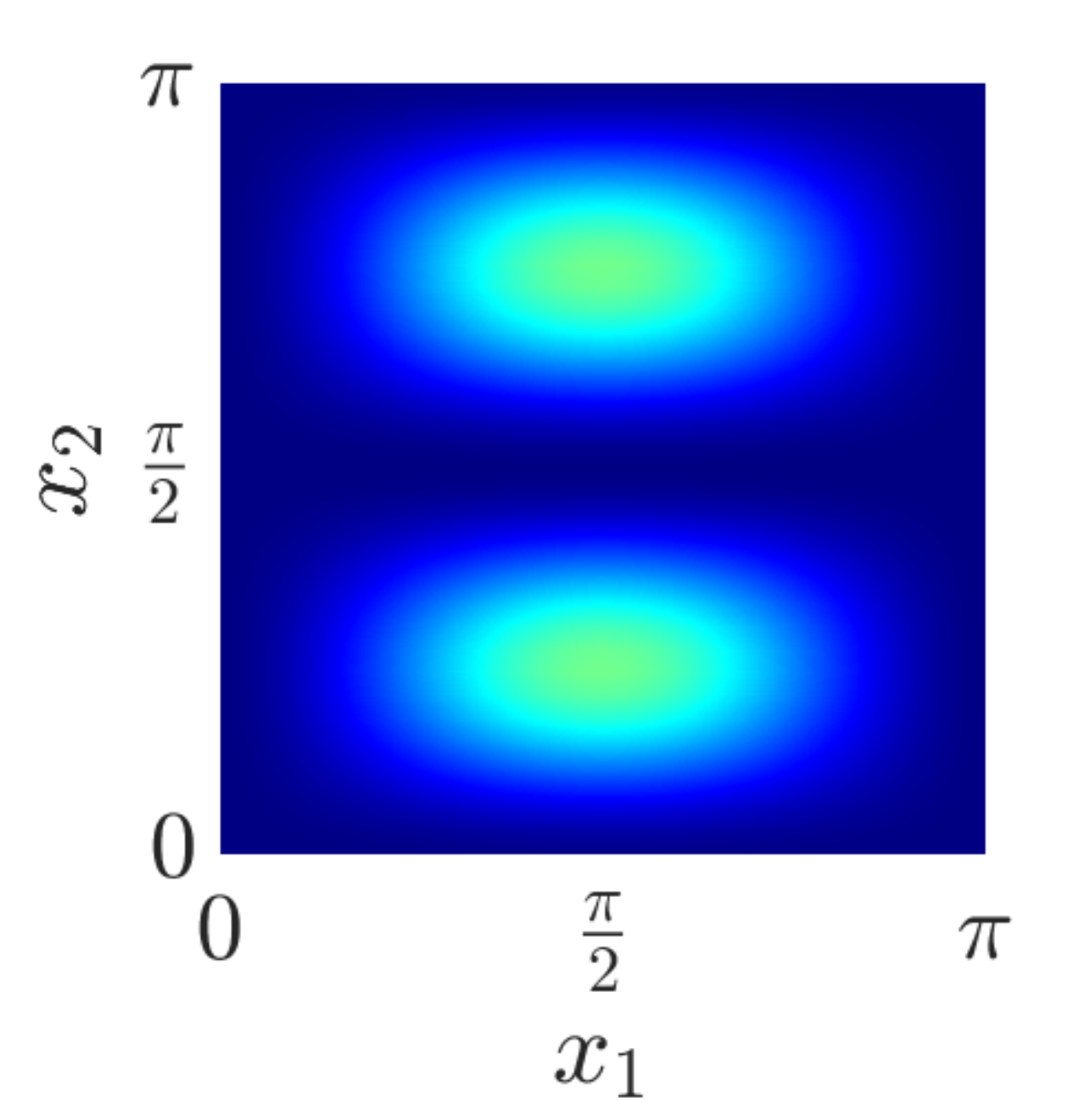}
\includegraphics[scale=0.27]{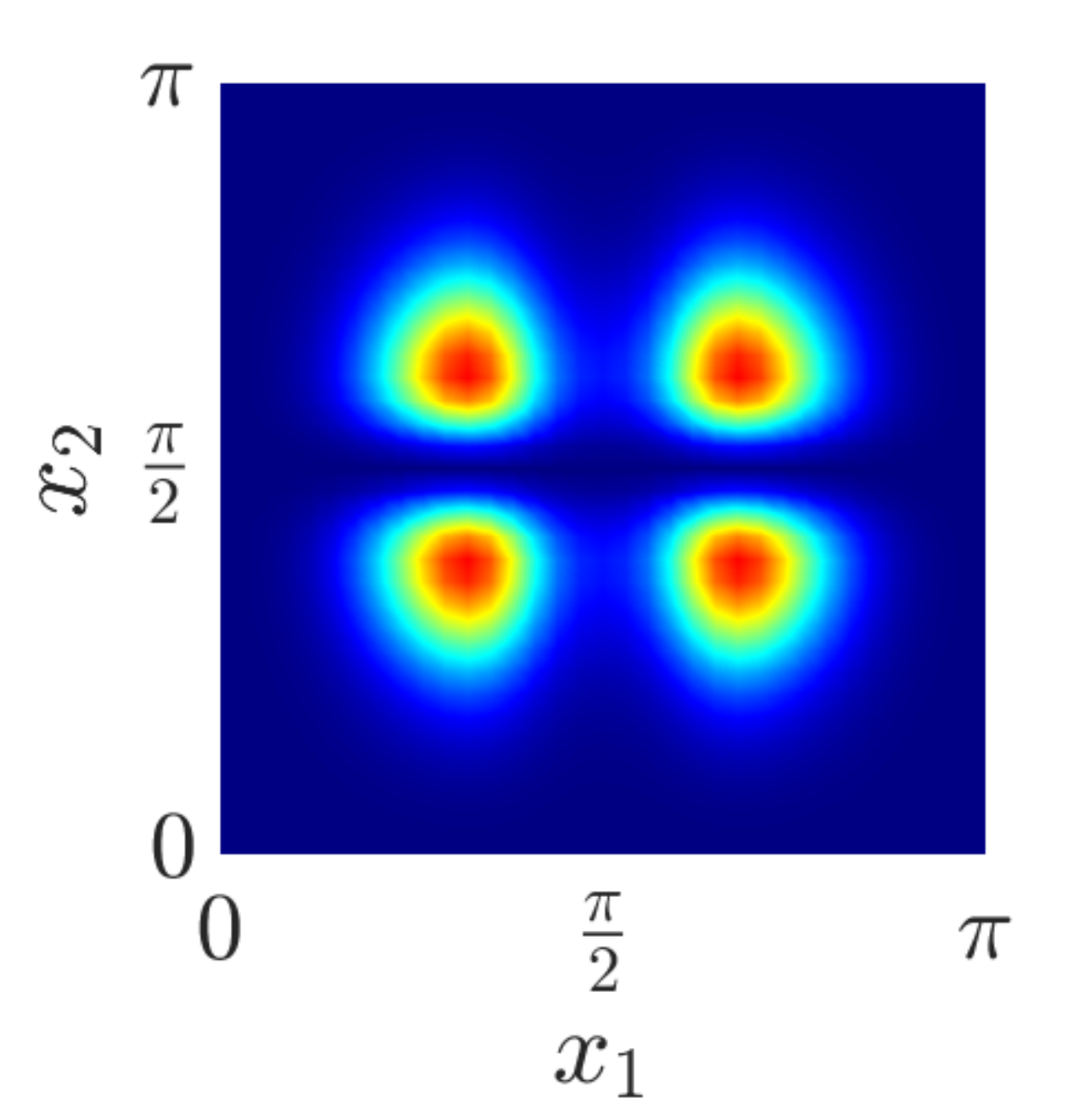}
\includegraphics[scale=0.27]{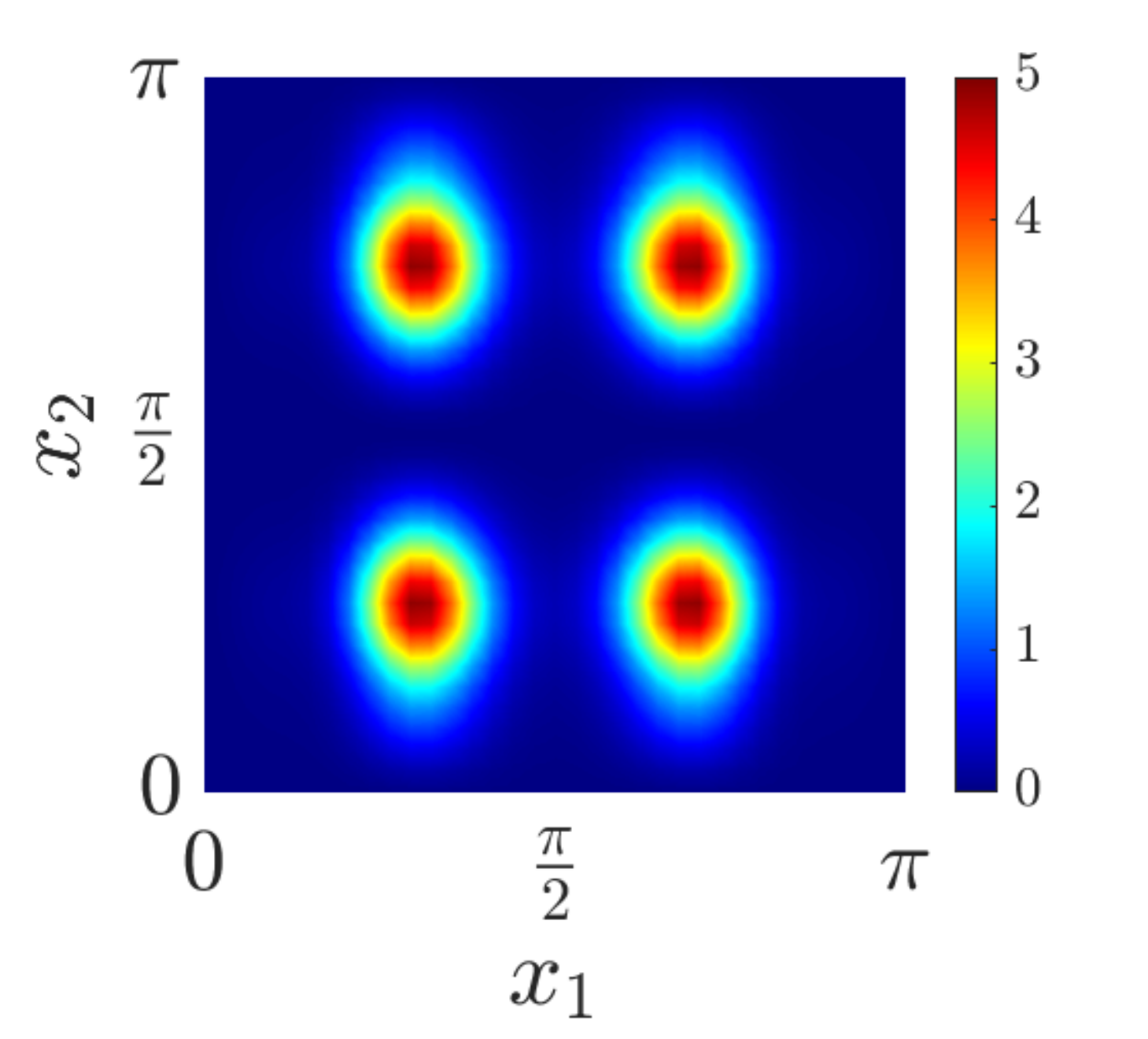}\\
{\rotatebox{90}{\hspace{2.2cm}\rotatebox{-90}{
\footnotesize$p(x_3,x_4,t)$\hspace{0.7cm}}}}
\includegraphics[scale=0.27]{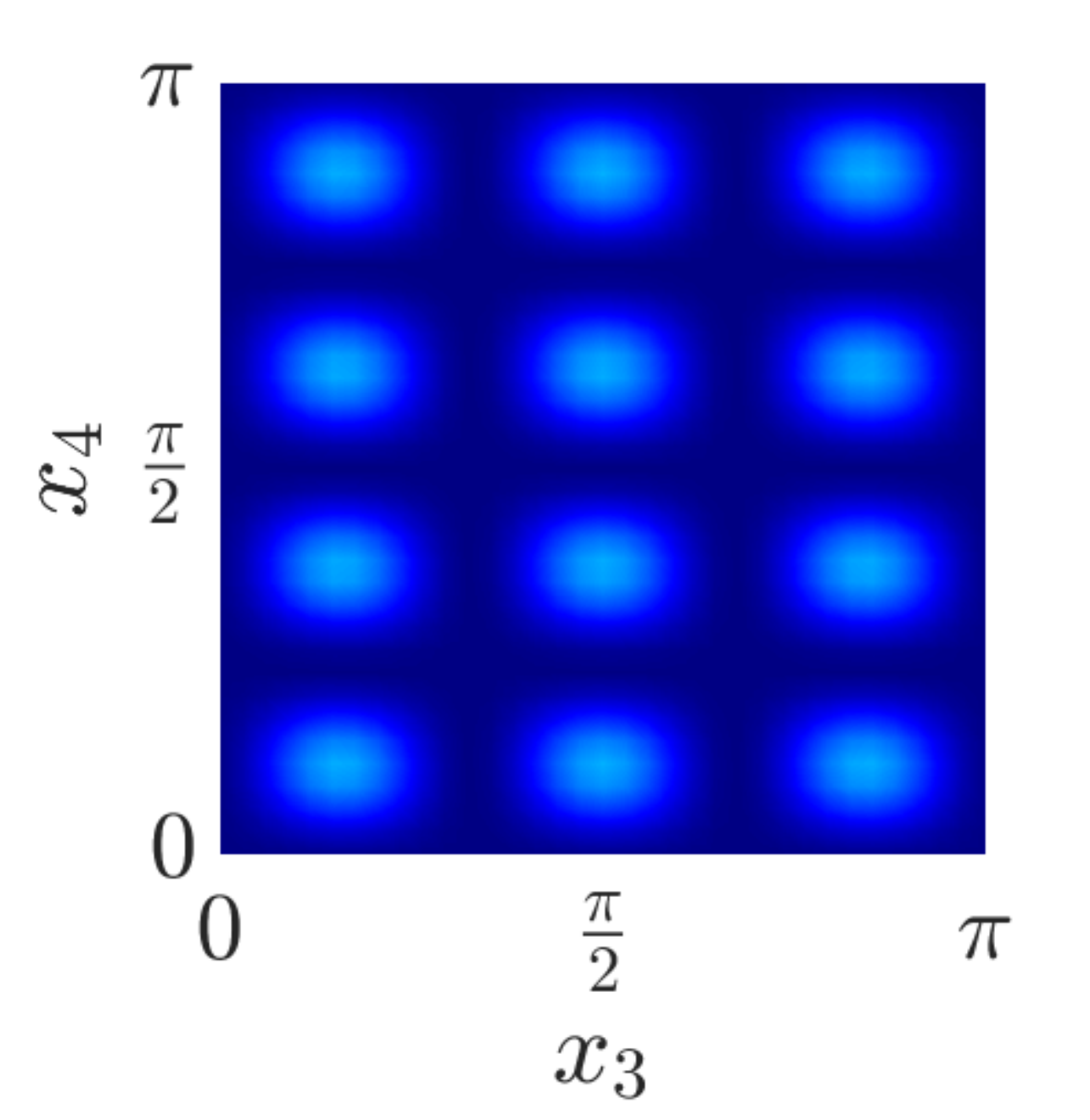}
\includegraphics[scale=0.27]{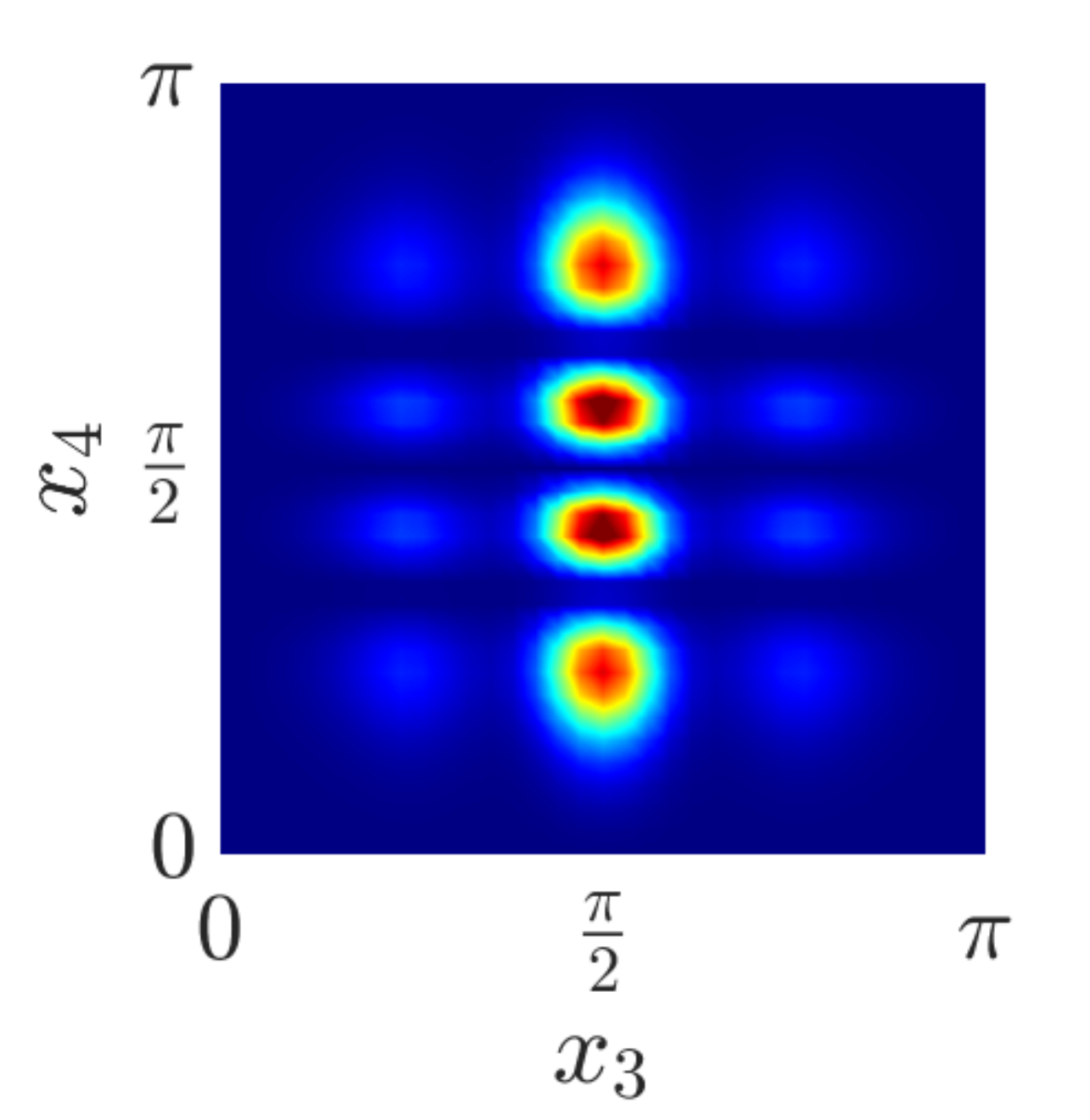}
\includegraphics[scale=0.27]{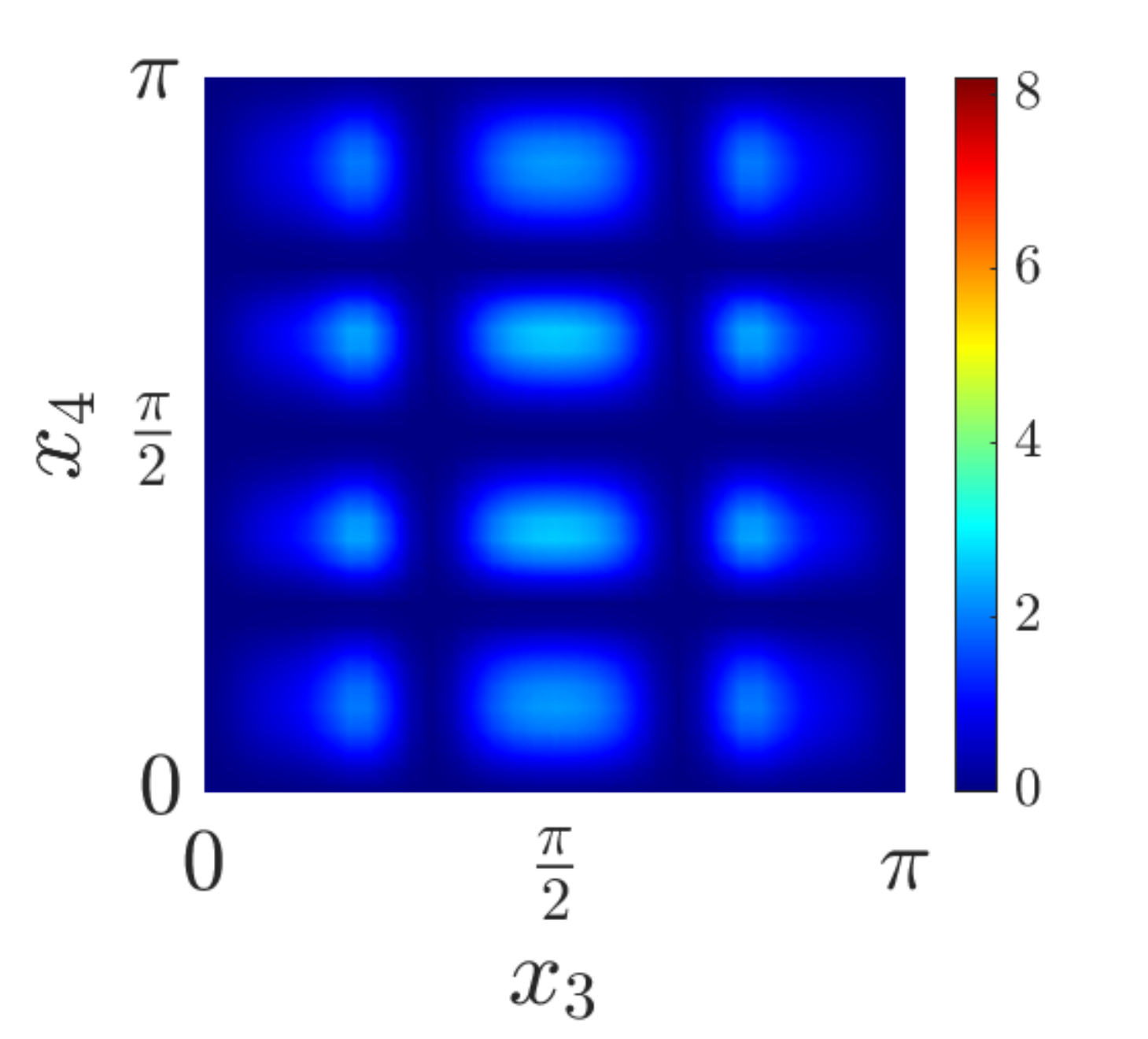}\\
{\rotatebox{90}{\hspace{2.2cm}\rotatebox{-90}{
\footnotesize$p(x_5,x_6,t)$\hspace{0.7cm}}}}
\includegraphics[scale=0.27]{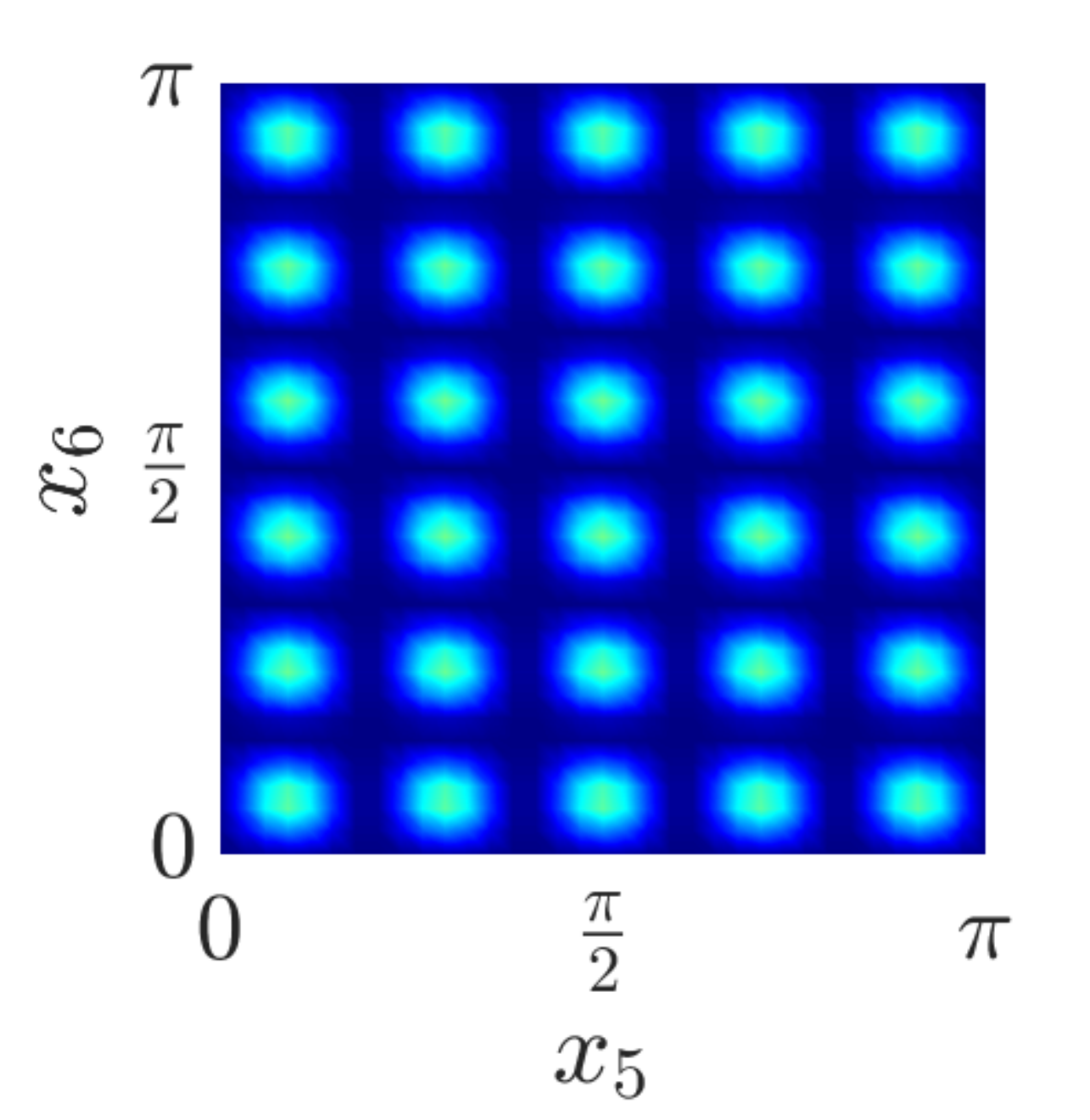}
\includegraphics[scale=0.27]{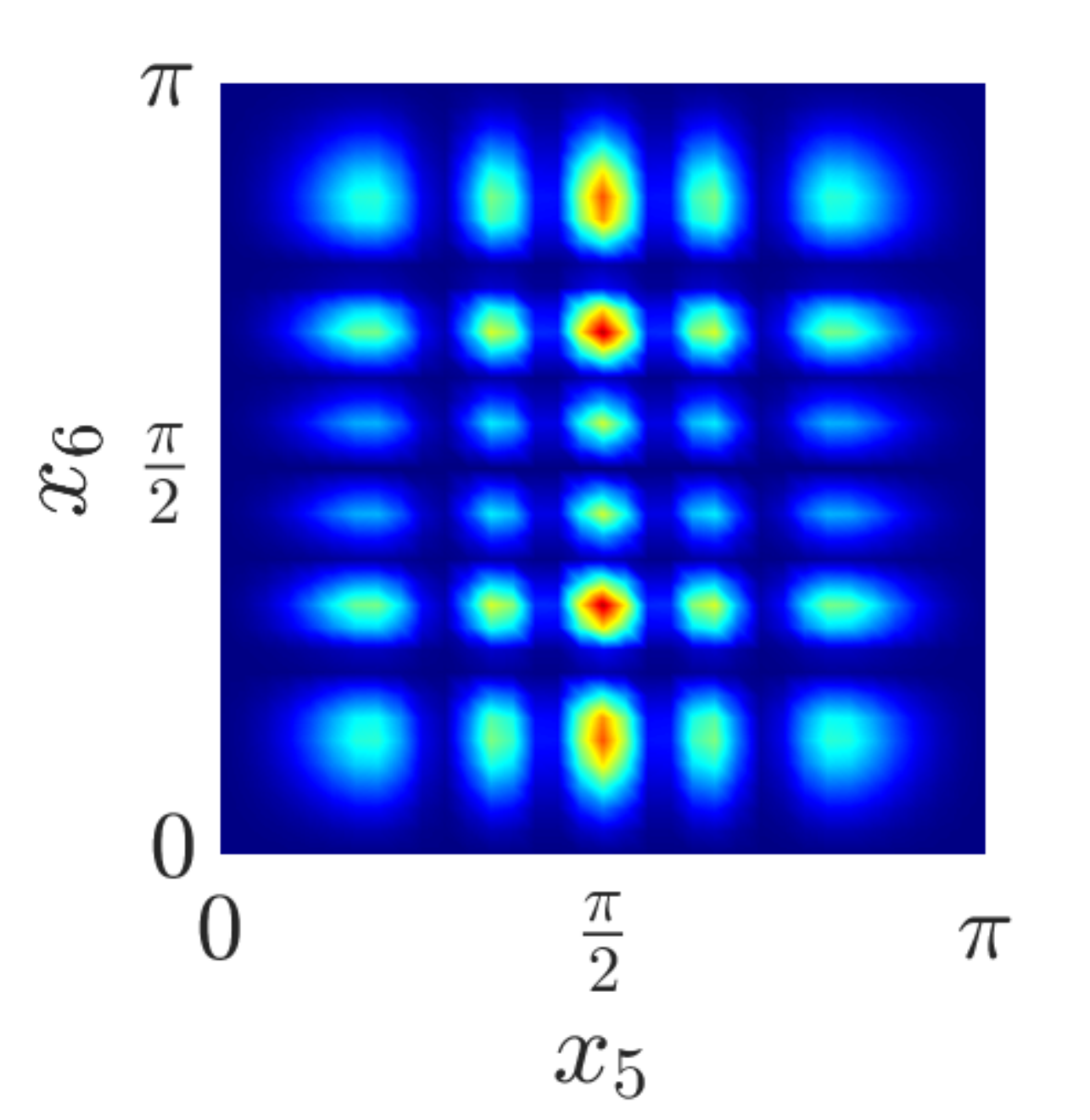}
\includegraphics[scale=0.27]{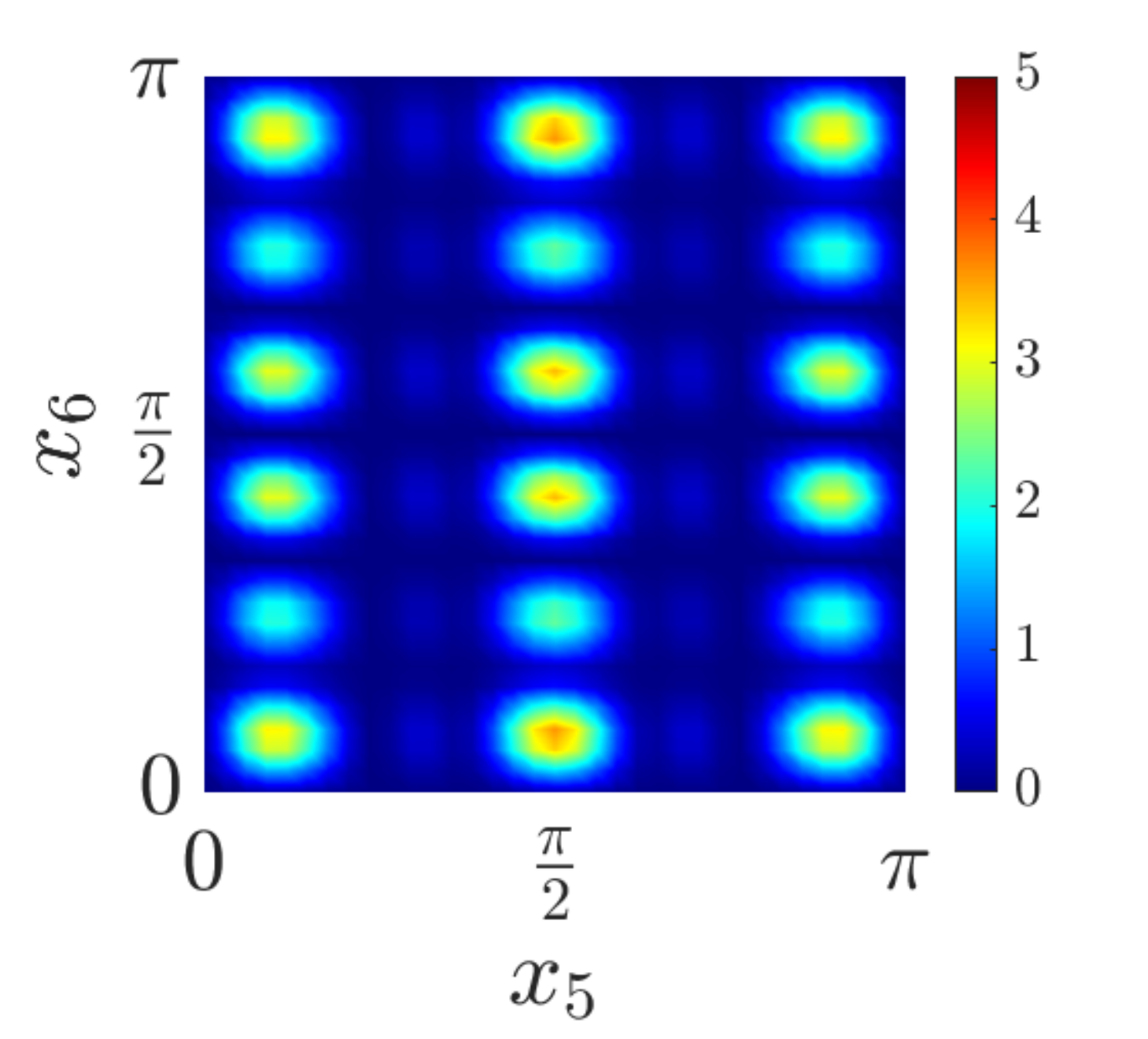}\\
\caption{Marginal probability density functions representing particle positions
generated by the nonlinear Schr\"odinger equation \eqref{eqn:nlse} with 
$\varepsilon=10^{-4}$, interaction potential \eqref{eqn:pot-well} 
and initial condition \eqref{ICS}.}
\label{fig:time-snapshot-plot-schrodinger}
\end{figure}
\begin{figure}[t]
\centerline{\footnotesize (a)\hspace{6.5cm}(b)}
\centering
\includegraphics[scale=0.5]{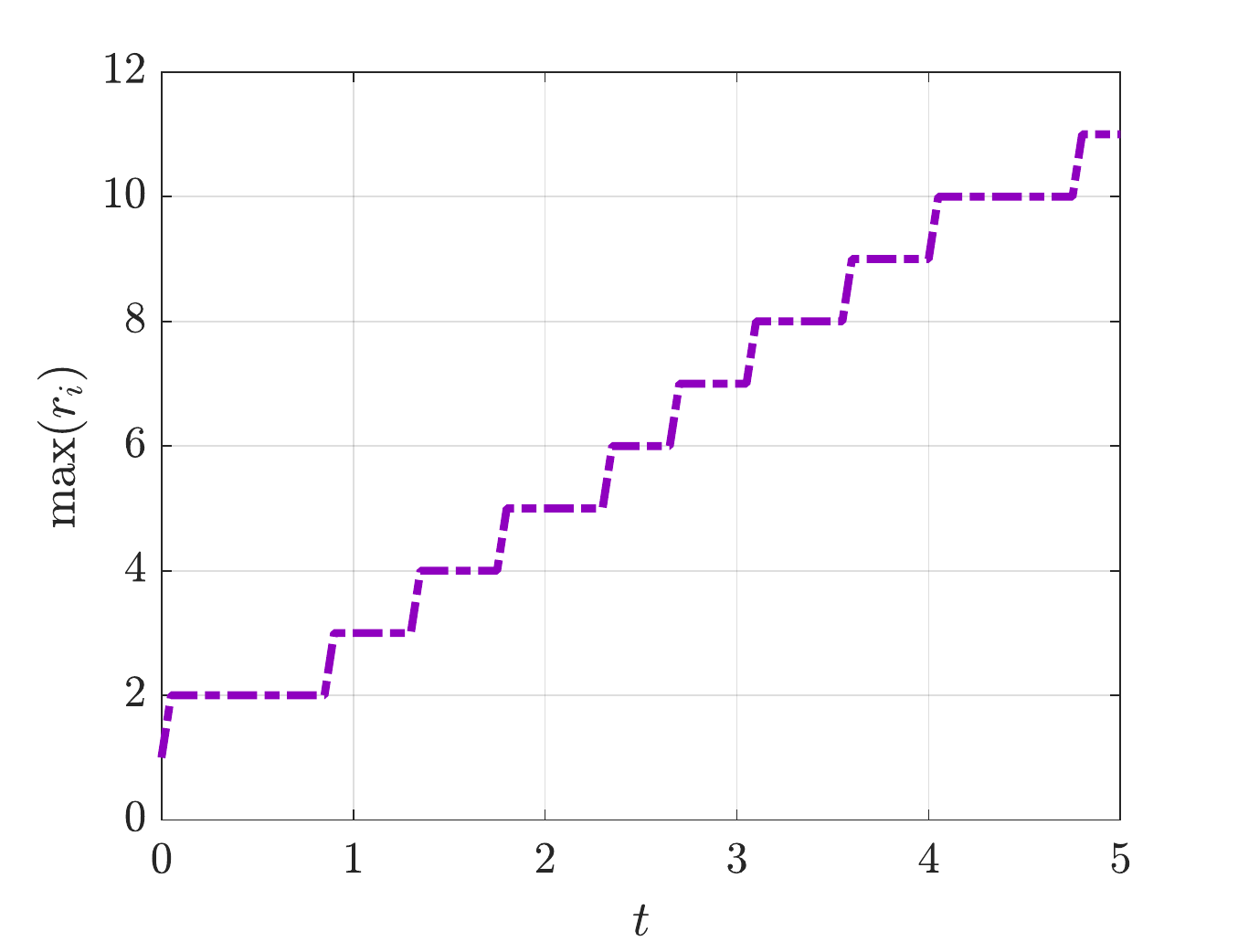}
\includegraphics[scale=0.5]{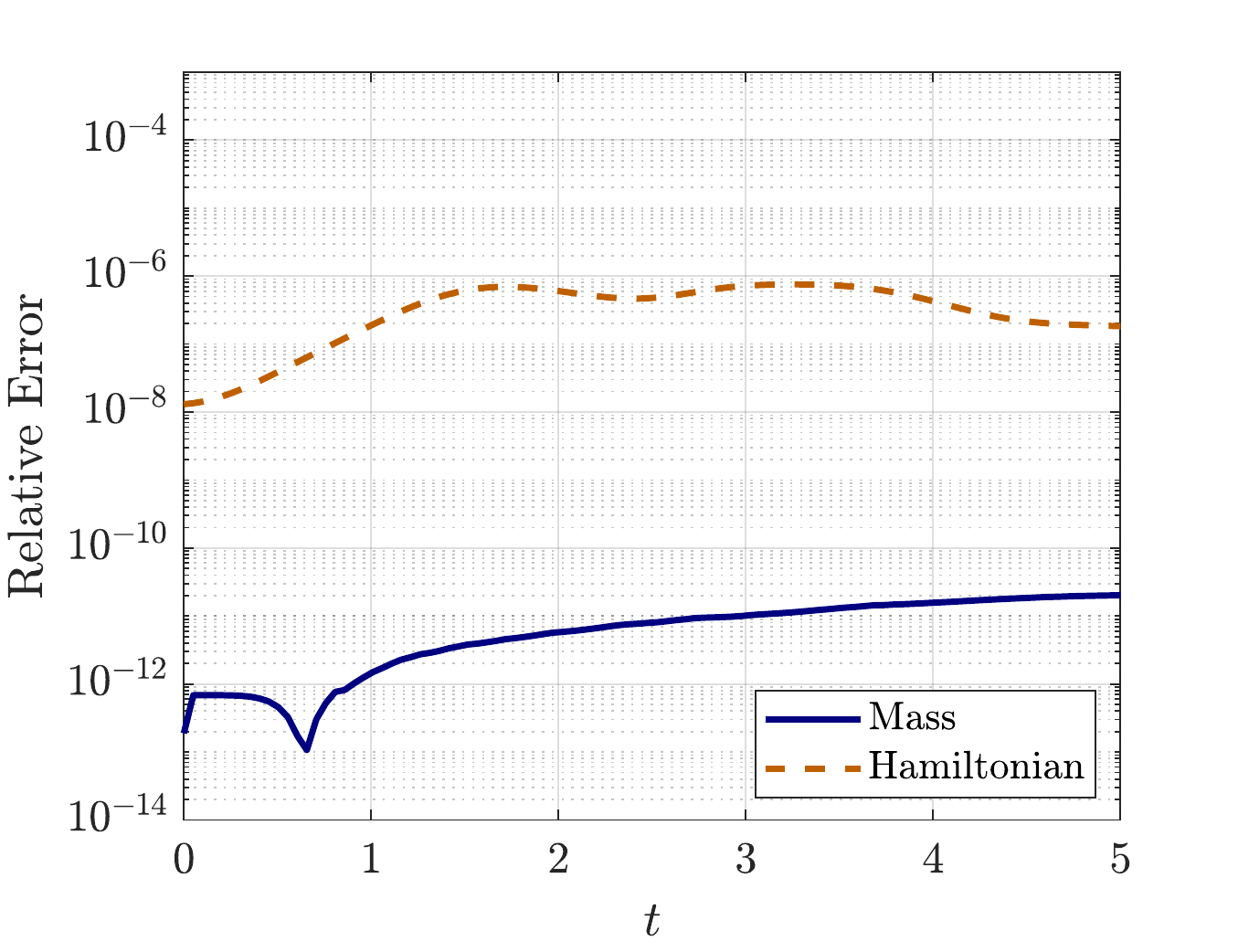}
\caption{(a) Maximum tensor rank versus time, and (b) relative error in the 
the solution mass and Hamiltonian \eqref{Ham} for 
nonlinear Schr\"odinger equation \eqref{eqn:nlse} in dimension $d=6$, with 
$\varepsilon=10^{-4}$, interaction potential \eqref{eqn:pot-well} 
and initial condition \eqref{ICS}.}
\label{fig:rank-schrodinger}
\end{figure}
\noindent
The nonlinear Schr\"odinger equation  
is complex-valued PDE whose main applications are 
wave propagation in nonlinear optical fibers, 
and Bose-Einstein condensates \cite{PhysRevLett.86.2353,pdefind}. 
The equation can be written as\footnote{As is well-known, the nonlinear 
Schr\"odinger  equation \eqref{eqn:nlse} is a Hamiltonian PDE which can 
be derived as a stationary point of the energy density (Hamilton's functional)
\begin{equation}
H({ \phi})
=
\int_{\Omega}\left(
\frac{1}{4}\|\nabla \phi\|^2
+\frac{1}{2}V({\bm x})|{\phi }|^2
+\frac{\varepsilon}{4}
|\phi|^4 \right)d{\bm x}.
\label{Ham}
\end{equation}} 
\begin{equation}
\label{eqn:nlse}
\frac{\partial \phi(\bm x,t)}{\partial t}
=
\frac{i}{2}\Delta\phi({\bm x},t) -
iV({\bm x})\phi(\bm x,t) -
	i\varepsilon|{\phi }({\bm x},t)|^2
	{\phi }({\bm x},t).
\end{equation}
where $V(\bm x)$ is the particle interaction  potential. 
In our example, we consider 6 particles
trapped on a line segment in the presence
of a double-well potential defined as
\begin{equation}
\label{eqn:pot-well}
V({\bm x})=
\sum_{k=1}^{6} W(x_k),
\qquad
W(x_k)= \left [
	1+e^{\cos(x_k)^2}+\frac{3}{4}
	\left ( 1+e^{\sin(x_k)^2} \right )
	\right ] \eta_{\theta}(x_k).
\end{equation}
Here, $W(x_{k})$ is a potential with barriers at $x_k=0$
and $x_k=\pi$ (see Figure \ref{fig:wells-schrodinger}).
The function $\eta_{\theta}(x_i)$ is a mollifier
which converges weakly to $1+\delta(x_i)+\delta(x_i-\pi)$
as $\theta\rightarrow 0$. One such mollifier is
\begin{equation}
\label{eqn:mollifier}
\eta_{\theta}(x)
=1+\frac{1}{\sqrt{2\pi}\theta}\left(
\exp\left [{-\frac{x^2}{2\theta^2}}\right]
+
\exp\left [{-\frac{(x-\pi)^2}{2\theta^2}}\right ]
\right ).
\end{equation}
As $\theta\rightarrow 0$,
the weak limit of 
$\eta_{\theta}$
translates to
zero Dirichlet boundary conditions
on the domain $\Omega = [0,\pi]^6$.
The Dirichlet conditions naturally allow us to
use a discrete sine transform to compute
the Laplacian's differentiation matrices.
We discretize the domain $\Omega$ on a
uniform grid with 35 points per dimension.
This gives us a tensor with $35^6=838265625$
entries, or 14.7 Gigabytes per temporal solution 
snapshot if we store the uncompressed
tensor in a double precision 
IEEE 754 floating point format.
We choose a product of pure states for
our initial condition, i.e., 
\begin{equation}
\label{ICS}
\phi({\bm x},0) =
\prod_{k=1}^6 \frac{6^{1/6}4k}{2k\pi-\sin(2\pi k)} \sin(kx_k),
\end{equation}
The normalizing constant $ 6^{1/6}4k/(2k\pi-\sin(2\pi k))$ guarantees that 
the wavefunction has an initial mass of 6 particles.
We now apply an operator splitting method to solve \eqref{eqn:nlse}.
The linear components are all tangential to the tensor manifold
${\cal H}_{\bm r}$, and have a physical
interpretation. Specifically,
\begin{equation}
\frac{\partial g_k}{\partial t}=
\frac{i}{2}\frac{\partial^2 g_k}{\partial x_k^2}
-iW(x_k)g_k,
\quad \quad k=1,\dots,6,
\end{equation}
is a sequence of one-dimensional linear Schr\"odinger
equations. The non-tangential part reduces to
\begin{equation}
\frac{d u}{d t}=
i\varepsilon |u|^2 u,
\end{equation}
which may be interpreted as an ODE describing
all pointwise interactions of the particles.
Here we set $\varepsilon=10^{-4}$ to model
weak iteractions. 
Clearly, the linear terms in \eqref{eqn:nlse} have purely 
imaginary eigenvalues. Therefore to integrate the semi-discrete
form of \eqref{eqn:nlse} 
in time we need a numerical scheme that has the imaginary
axis within its 
stability region.
Since implicit step-truncation Euler method introduces 
a significant damping, thereby exacerbating inaccuracy due to
discrete time stepping,  we apply the implicit step-truncation
midpoint method. For this problem, we set 
$\Delta t = 5\times 10^{-2}$ and the tensor truncation error 
to be constant in time at $100\Delta t^{3}$ to maintain 
second-order consistency.
Tolerance of the inexact Newton method was set
to $5\times 10^{-5}$ and the HT/TT-GMRES relative
error to $5\times \eta = 10^{-4}$. 
In Figure \ref{fig:time-snapshot-plot-schrodinger}
we plot the time-dependent marginal probability density 
functions for the joint position variables $(x_k,x_{k+1})$, $k=1,3,5$.
Such probability densities are defined as
\begin{equation}
p(x_1,x_{2},t)
=
\frac{1}{6}
\int_{[0,1]^4}
\phi^*(\bm x,t)\phi(\bm x,t)
{d}x_3 {d}x_4
{d}x_5 {d}x_6,
\end{equation}
and analogously for 
$p(x_3,x_{4},t)$ and 
$p(x_5,x_{6},t)$.
It is seen that the lower energy pure states (position variables
$(x_1,x_{2})$) quickly get trapped in the two wells,
oscillating at their bottoms. Interestingly,
at $t=2.5$ it appears that particle $x_3$
is most likely to be observed in between the two
wells whenever the particle $x_4$ is in a well bottom.

%

In Figure \ref{fig:rank-schrodinger}(a), we plot
the rank over time for this problem.
The rank also has physical meaning. A higher
rank HT tensor is equivalent to a wavefunction
with many entangled states, regardless of
which $L^2(\Omega)$ basis we choose. 
In the example discussed in this section, the particles 
interacting over time monotonically increase the rank. 
We emphasize that the nonlinearity in \eqref{eqn:nlse} poses a
significant challenge to tensor methods. In fact, in a 
single application of the function
$
i\varepsilon |u|^2 u,
$
we may end up tripling the rank.
This can be mitigated somewhat by using
the approximate element-wise tensor multiplication routine.
Even so, if the inexact Newton Method requires many dozens 
of iterations to halt, the rank may grow very rapidly in 
a single time step, causing a slowing due to large array 
storage. This problem is particularly apparent 
when $\varepsilon \approx 1$ or if $\Delta t$ is made 
significantly smaller, e.g. $\Delta t = 10^{-4}$.
A more effective way of evaluating nonlinear
functions on tensors decompositions would
certainly mitigate this issue.
In Figure \ref{fig:rank-schrodinger}(b),
we plot the relative error of the solution mass 
and the Hamiltonian \eqref{Ham}
over time. The relative errors hover around $10^{-11}$
and $10^{-6}$, respectively. It is remarkable that even though
the additional tensor truncation done after the inner
loop of the implicit solver in principle destroys the symplectic 
properties of the midpoint method, the mass 
and Hamiltonian are still preserved with high accuracy.

\appendix

\section{Solving algebraic equations on tensor manifolds}
\label{apndx:ht-tt-newton}
\noindent
We have seen in section \ref{sec:implicit-st} that 
a large class of implicit step-truncation methods can be 
equivalently formulated as a root-finding problem for a
nonlinear system algebraic equations of the form 
\begin{equation}
{\bm H}({\bm f}) = {\bm 0}
\label{nonlinEq}
\end{equation}
at each time step. In this appendix we develop 
numerical algorithms to compute an approximate 
solution of \eqref{nonlinEq} on a 
tensor manifold $\mathcal{H}_{\bm r}$ with 
a given rank $\bm r$. In other words, 
we are interested in finding  
${\bm f}\in \mathcal{H}_{\bm r}$ that solves 
\eqref{nonlinEq} with controlled accuracy. 
%
%
%
To this end, we combine the inexact inexact 
Newton method \cite[Theorem 2.3 and Corollary 3.5]{dembo1982inexact} 
with the TT-GMRES linear solver proposed in \cite{dolgov2013ttgmres}.


\begin{theorem}[Inexact Newton method \cite{dembo1982inexact}]
\label{thm:dembo-inexact}
Let ${\bm H}:{\mathbb R}^{N}\rightarrow{\mathbb R}^{N}$
be continuously differentiable in a neighborhood 
of a zero $\bm f^*$, and suppose that the Jacobian 
of $\bm H$, i.e., $\bm J_{\bm H}(\bm f)=\partial \bm H(\bm f)/\partial \bm f$, 
is invertible at  ${\bm f}^{*}$. Given ${\bm f}^{[0]}\in{\mathbb R}^{N}$,
consider the sequence
\begin{equation}
{\bm f}^{[j+1]} = {\bm f}^{[j]} + {\bm s}^{[j]},\qquad j=0,1,\ldots
\end{equation}
where each ${\bm s}^{[j]}$ solves the Newton iteration
up to relative error $\eta^{[j]}$, i.e., it satisfies
\begin{equation}
\left \|\bm J_{\bm H}\left({\bm f}^{[j]}\right)
{\bm s}^{[j]}
+ {\bm H}\left ({\bm f}^{[j]}\right )
\right \|
\leq 
\left \|
{\bm H}\left ({\bm f}^{[j]}\right )
\right \|\eta^{[j]}.
\label{boundsys}
\end{equation}
If $\eta^{[j]} < 1$ for all $j$, then there exists 
$\varepsilon > 0$ so that for any initial guess satisfying
$\left \| {\bm f}^{[0]} - {\bm f}^*\right \| <\varepsilon$
the sequence $\{{{\bm f}^{[j]}}\}$ converges linearly
to ${{\bm f}^{[*]}}$. If $\eta^{[j]}\rightarrow 0$ as 
$j\rightarrow \infty$,
then the convergence speed is superlinear.
\end{theorem}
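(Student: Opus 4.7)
The plan is to reduce the inexact Newton iteration to a perturbation of the classical Newton step and then identify a norm in which the resulting error recursion is a genuine contraction near $\bm f^*$. The natural choice, following the approach of Dembo--Eisenstat--Steihaug, is the weighted norm $\|\bm y\|_* := \|\bm J_{\bm H}(\bm f^*)\bm y\|$, which is equivalent to the Euclidean norm because $\bm J_{\bm H}(\bm f^*)$ is invertible by hypothesis, and in which the relative residual condition \eqref{boundsys} transparently produces the factor $\eta^{[j]}$ that drives the contraction.

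The key steps, in order, are as follows. First, using continuity of $\bm J_{\bm H}$ at $\bm f^*$, fix a closed ball $B_\delta(\bm f^*)$ on which $\bm J_{\bm H}(\bm f)$ remains invertible with a uniform bound on $\|\bm J_{\bm H}(\bm f)^{-1}\|$, and on which the Taylor remainder satisfies
\begin{equation}
\left\|\bm H(\bm f) - \bm J_{\bm H}(\bm f)(\bm f - \bm f^*)\right\| \leq L\,\|\bm f - \bm f^*\|^2
\end{equation}
(using $\bm H(\bm f^*) = \bm 0$ and smoothness of $\bm H$). Second, rewrite the inexact step by setting $\bm t^{[j]} := \bm J_{\bm H}(\bm f^{[j]})\bm s^{[j]} + \bm H(\bm f^{[j]})$, so that $\|\bm t^{[j]}\| \leq \eta^{[j]}\|\bm H(\bm f^{[j]})\|$ and $\bm s^{[j]} = -\bm J_{\bm H}(\bm f^{[j]})^{-1}\bigl(\bm H(\bm f^{[j]}) - \bm t^{[j]}\bigr)$. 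Substituting into $\bm e^{[j+1]} := \bm f^{[j+1]} - \bm f^* = \bm e^{[j]} + \bm s^{[j]}$ and combining the Taylor bound with the residual bound, I would derive an inequality of the form
\begin{equation}
\|\bm e^{[j+1]}\|_* \leq \bigl(\eta^{[j]} + C\,\|\bm e^{[j]}\|\bigr)\,\|\bm e^{[j]}\|_*,
\end{equation}
where $C$ depends on $L$ and on the uniform bound on $\|\bm J_{\bm H}^{-1}\|$ over $B_\delta$.

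The main obstacle is that the hypothesis reads $\eta^{[j]} < 1$ pointwise rather than $\sup_j \eta^{[j]} < 1$, so the quadratic correction cannot be absorbed uniformly by a single contraction factor. I would handle this by working with the residual norm $\|\bm H(\bm f^{[j]})\|$, which near $\bm f^*$ is equivalent to $\|\bm e^{[j]}\|_*$ up to constants, to obtain the cleaner recursion $\|\bm H(\bm f^{[j+1]})\| \leq \bigl(\eta^{[j]} + o(1)\bigr)\|\bm H(\bm f^{[j]})\|$ as $\bm f^{[j]} \to \bm f^*$. This gives contraction for any admissible $\eta^{[j]}<1$ provided $\varepsilon$ is chosen small enough that the $o(1)$ slack satisfies $\eta^{[j]} + o(1) \leq \rho < 1$ uniformly along the iterate sequence; linear convergence then follows by induction on $j$, simultaneously verifying $\bm f^{[j]} \in B_\varepsilon(\bm f^*)$ at each step. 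The superlinear refinement is immediate: if $\eta^{[j]} \to 0$, the contraction ratio $\eta^{[j]} + o(1) \to 0$ as well, so $\|\bm e^{[j+1]}\|_* / \|\bm e^{[j]}\|_* \to 0$.
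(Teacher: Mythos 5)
The paper does not prove this theorem: it is imported verbatim from Dembo--Eisenstat--Steihaug (the citation points to their Theorem 2.3 and Corollary 3.5), so there is no in-paper argument to compare against. Your sketch follows exactly the route of that original proof --- work in the weighted norm $\|\bm y\|_* = \|\bm J_{\bm H}(\bm f^*)\bm y\|$, write $\bm s^{[j]} = -\bm J_{\bm H}(\bm f^{[j]})^{-1}(\bm H(\bm f^{[j]}) - \bm t^{[j]})$ with $\|\bm t^{[j]}\|\leq \eta^{[j]}\|\bm H(\bm f^{[j]})\|$, and derive a one-step recursion for $\|\bm e^{[j]}\|_*$ --- and that outline is sound. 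One minor repair: the hypothesis is only continuous differentiability, so the Taylor remainder is $o(\|\bm f-\bm f^*\|)$ rather than $L\|\bm f-\bm f^*\|^2$; your recursion should read $\|\bm e^{[j+1]}\|_*\leq (\eta^{[j]}+o(1))\|\bm e^{[j]}\|_*$ with the $o(1)$ controlled by the radius $\delta$, which is all the argument needs.

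The genuine gap is in your proposed resolution of the pointwise-versus-uniform issue, which you correctly identify but do not actually fix. If one only knows $\eta^{[j]}<1$ for each $j$, with possibly $\sup_j\eta^{[j]}=1$ (say $\eta^{[j]}=1-j^{-2}$), then no choice of $\varepsilon$ makes $\eta^{[j]}+o(1)\leq\rho<1$ hold uniformly along the sequence: the $o(1)$ slack is nonnegative, so $\eta^{[j]}+o(1)\geq\eta^{[j]}\to 1$, and the product $\prod_j(\eta^{[j]}+o(1))$ need not tend to zero, so neither the residuals nor the errors are forced to converge. Passing to the residual norm $\|\bm H(\bm f^{[j]})\|$ does not change this, since near $\bm f^*$ it is equivalent to $\|\bm e^{[j]}\|_*$ and obeys the same recursion. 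The theorem as transcribed here is loose on exactly this point; the correct hypothesis (and the one Dembo--Eisenstat--Steihaug actually use) is $\eta^{[j]}\leq\eta_{\max}<1$ uniformly, under which your induction closes: pick $t$ with $\eta_{\max}<t<1$, shrink $\varepsilon$ so that $\eta^{[j]}+o(1)\leq t$ on $B_\varepsilon(\bm f^*)$, and conclude $\|\bm e^{[j+1]}\|_*\leq t\|\bm e^{[j]}\|_*$ together with $\bm f^{[j+1]}\in B_\varepsilon(\bm f^*)$. Your superlinear refinement for $\eta^{[j]}\to 0$ is then correct as written.
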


\noindent
The next question is how to compute an approximate 
solution of the linear system 
\begin{equation}
\bm J_{\bm H}\left({\bm f}^{[j]}\right)
{\bm s}^{[j]}
=- {\bm H}\left ({\bm f}^{[j]}\right )
\end{equation}
satisfying the bound \eqref{boundsys}, without 
inverting the Jacobian $\bm J_{\bm H}\left({\bm f}^{[j]}\right)$ 
and assuming that ${\bm s}^{[j]}\in \mathcal{H}_{\bm r_j}$, i.e., 
that ${\bm s}^{[j]}$ is a tensor with 
rank $\bm r_j$. To this end we utilize the relaxed HT/TT-GMRES 
method discussed in \cite{dolgov2013ttgmres}
HT/TT-GMRES is an adapted tensor-structured generalized 
minimal residual (GMRES) method to solve 
linear systems in a tensor format. 
The solver employs an indirect accuracy check and a 
stagnation restart check in its halting criterion 
which we summarize in the following Lemma.

\begin{lemma}[Accuracy of HT/TT-GMRES \cite{dolgov2013ttgmres}]
\label{lemma:ht-tt-gmres}
Let ${\bm J}{\bm f} = {\bm b}$ be a linear system
where ${\bm f}$, ${\bm b}$ are tensors in HT
or TT format, and $\bm J$ is a bounded 
linear operator on $\bm f$. Let 
$\{{\bm f}^{[0]},{\bm f}^{[1]},\ldots\}$ be the 
sequence of approximate solutions generated by 
HT/TT-GMRES algorithm in \cite{dolgov2013ttgmres}, and 
$\varepsilon > 0$ be the stopping tolerance for 
the iterations. Then
\begin{equation}
\label{eqn:gmres-error-1}
\left \|
{\bm J}{\bm f^{[j]}} - {\bm b}
\right \|
\leq m
\| {\bm J}\|
\| {\bm J}^{-1}\|
\| {\bm b}\|\varepsilon,
\end{equation}
where $m$ is the number of Krylov 
iterations performed before restart. Similarly,
the distance between ${\bm f^{[j]}}$ and the 
exact solution ${\bm f}$ can be bounded as
\begin{equation}
\label{eqn:gmres-error-2}
\left \|
{\bm f^{[j]}} - {\bm f}
\right \|
\leq m
\| {\bm J}\|
\| {\bm J}^{-1}\|^{2}
\| {\bm b}\|\varepsilon.
\end{equation}
\end{lemma}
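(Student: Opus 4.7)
The plan is to follow the inexact GMRES analysis that underlies the TT-GMRES derivation in \cite{dolgov2013ttgmres}, extended without essential change to the HT format since both formats share the same algebra of bounded multilinear projections and truncations. The key observation is that HT/TT-GMRES differs from standard GMRES only in that each matrix-vector product $\bm J \bm v_k$ is replaced by a truncated version $\bm J \bm v_k + \bm\delta_k$, where the truncation accuracy at step $k$ is controlled by a relaxed tolerance $\varepsilon_k$ that is in turn derived from the overall stopping tolerance $\varepsilon$.

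First, I would set up the perturbed Arnoldi recurrence. After $m$ Krylov steps with inexact matvecs, the standard identity $\bm J \bm V_m = \bm V_{m+1}\bar{\bm H}_m$ becomes $(\bm J + \bm E_m)\bm V_m = \bm V_{m+1}\bar{\bm H}_m$, where $\bm E_m$ collects the truncation perturbations column by column and satisfies a bound of the form $\|\bm E_m\| \leq \sum_{k=1}^m \|\bm\delta_k\|/\|\bm r_{k-1}\|$ in the relaxed setting. The relaxed choice of $\varepsilon_k$ in HT/TT-GMRES is designed precisely so that $\|\bm E_m\| \leq m\varepsilon$ up to the problem scale $\|\bm b\|$; substituting the computed iterate $\bm f^{[j]} = \bm V_m \bm y$ into $\bm J\bm f^{[j]} - \bm b$ and splitting into an exact Arnoldi residual plus $\bm E_m \bm V_m \bm y$ yields the first inequality, since the exact residual is controlled by the internal minimization and the perturbation accumulates linearly in $m$ with factor $\|\bm J\|\|\bm J^{-1}\|\|\bm b\|$ (the latter arising because the stopping test is indirect — it compares a Hessenberg-based residual estimate to $\varepsilon$ rather than the true residual, which picks up a condition-number factor).

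For the second inequality, I would simply apply $\bm J^{-1}$ to the residual bound: since $\bm f^{[j]} - \bm f = \bm J^{-1}(\bm J \bm f^{[j]} - \bm b)$, taking norms and using submultiplicativity converts one factor of $\|\bm J^{-1}\|$ in the first bound into $\|\bm J^{-1}\|^2$, which is exactly \eqref{eqn:gmres-error-2}.

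The main obstacle is the careful bookkeeping in the perturbed Arnoldi step, in particular verifying that the relaxation schedule actually produces a column-wise bound that sums cleanly to $m\varepsilon$; this is where the analysis must use the specific indirect stopping criterion of the HT/TT-GMRES implementation, as a naive GMRES proof would give an $m$-independent bound but would not cover the tensor-truncated matvecs. A subordinate issue is that the HT truncation operator, unlike a linear projection, is only guaranteed to be quasi-optimal, so one has to absorb a small constant (typically $\sqrt{2d-3}$) into $\|\bm J\|$ when bounding each $\|\bm\delta_k\|$; this does not affect the form of the stated inequalities but must be tracked to justify the clean $m\|\bm J\|\|\bm J^{-1}\|$ prefactor.
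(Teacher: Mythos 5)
The paper does not actually prove this lemma: it is imported as a quoted result from \cite{dolgov2013ttgmres} (note the citation in the lemma's title), and the appendix proceeds directly from its statement to Theorem \ref{thm:ht-tt-newton}, which uses \eqref{eqn:gmres-error-1}--\eqref{eqn:gmres-error-2} as black-box inputs. So there is no in-paper proof to compare against; what can be assessed is whether your reconstruction of the source argument is sound.

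Your outline is the right one, and part of it is genuinely complete: the passage from \eqref{eqn:gmres-error-1} to \eqref{eqn:gmres-error-2} via $\bm f^{[j]} - \bm f = \bm J^{-1}\left(\bm J\bm f^{[j]} - \bm b\right)$ and submultiplicativity is exactly how the second bound follows from the first, and you state it correctly. The residual bound itself, however, is where your write-up remains a plan rather than a proof. You correctly identify the mechanism — a perturbed Arnoldi relation $(\bm J + \bm E_m)\bm V_m = \bm V_{m+1}\bar{\bm H}_m$, an indirect (Hessenberg-based) stopping test that only controls the residual of the perturbed operator, and a relaxation schedule for the per-step truncation tolerances $\varepsilon_k$ — but the step on which the stated constant $m\|\bm J\|\|\bm J^{-1}\|\|\bm b\|$ actually hinges, namely showing that the column-wise perturbations $\bm\delta_k$ under the specific relaxed tolerances of \cite{dolgov2013ttgmres} sum to at most $m\|\bm J\|\|\bm J^{-1}\|\|\bm b\|\varepsilon$ when transferred to the true residual, is the part you explicitly defer (``the main obstacle is the careful bookkeeping''). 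Since that bookkeeping is the entire content of the lemma, the proposal as written does not establish \eqref{eqn:gmres-error-1}; it correctly locates where the work lies and would need to import or reproduce the corresponding estimate from \cite{dolgov2013ttgmres} to close. One smaller remark: your aside that a ``naive GMRES proof would give an $m$-independent bound'' is misleading — the factor $m$ arises precisely because one perturbation term is accrued per Krylov vector, which is already visible in your own setup; the distinction from exact GMRES is not $m$-dependence per se but the presence of the truncation perturbations at all. The point about the quasi-optimality constant of hierarchical truncation being absorbed into the prefactor is a reasonable and honest caveat.
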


\noindent
We can now combine the HT/TT-GMRES linear solver 
with the inexact Newton method, to obtain an 
algorithm that allows us to solve nonlinear algebraic equations 
of the form \eqref{nonlinEq} on a tensor manifold. 

\begin{theorem}[HT/TT Newton method]
\label{thm:ht-tt-newton}
Let
${\bm H}:{\mathbb R}^{n_1\times n_2 \times
\cdots \times n_d}\rightarrow
{\mathbb R}^{n_1\times n_2 \times
\cdots \times n_d}$
be a continuously differentiable nonlinear map which 
operates on HT or TT tensor formats, and let 
${\bm f}^*$ be a zero of $\bm H$. 
Suppose that the Jacobian of $\bm H$, denoted as $\bm J_{\bm H}(\bm f)$,
is invertible at $\bm f^*$. Given an initial guess ${\bm f}^{[0]}$,
consider the iteration
\begin{equation}
{\bm f}^{[j+1]} = {\mathfrak{T}_{\bm r}}\left (
 {\bm {f}}^{[j]} + {\bm s}^{[j]}
\right )
\end{equation}
where ${\bm s}^{[j]}$ is the HT/TT-GMRES solution of 
$\bm J_{\bm H}\left(\bm f^{[j]}\right){\bm s}^{[j]}=
-{\bm H}\left ({\bm f}^{[j]}\right )$ satisfying
\begin{equation}
\left \|
\bm J_{\bm H}\left(\bm f^{[j]}\right){\bm s}^{[j]}
+ {\bm H}\left ({\bm f}^{[j]}\right )
\right \|
\leq 
{\frac{1}{2}}
\left \|
{\bm H}\left ({\bm f}^{[j]}\right )
\right \|\eta^{[j]},
\end{equation}
where $\eta^{[j]}$ is the relative error, which can 
be any value in the range $0\leq \eta^{[j]} < 1$.
Then the Newton iteration converges linearly so long as the
rank ${\bm r}$ of the truncation operator 
${\mathfrak{T}_{\bm r}}$ is chosen to satisfy
\begin{equation}
\left \|
{\mathfrak{T}_{\bm r}}\left (
 {\bm {f}}^{[j]} + {\bm s}^{[j]}
\right ) - {\bm {f}}^{[j]} - {\bm s}^{[j]}
\right \|
\leq 
\frac{\left \|{\bm H}\left ({\bm f}^{[j]}
\right)\right\|}{2\left\|{\bm J}^{[j]}\right\|}\eta^{[j]}.
\end{equation}
\end{theorem}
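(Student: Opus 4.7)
The plan is to reduce the theorem to the classical inexact Newton convergence result (Theorem \ref{thm:dembo-inexact}) by viewing the truncation step as an additional source of inexactness and absorbing it into the relative-error parameter of the inner linear solve. Concretely, I would introduce the effective Newton step
\begin{equation}
\tilde{\bm s}^{[j]} := \mathfrak{T}_{\bm r}\!\left({\bm f}^{[j]}+{\bm s}^{[j]}\right) - {\bm f}^{[j]},
\end{equation}
so that the proposed iteration can be rewritten in the standard form ${\bm f}^{[j+1]}={\bm f}^{[j]}+\tilde{\bm s}^{[j]}$. The goal is then to show that $\tilde{\bm s}^{[j]}$ satisfies the inexact Newton bound \eqref{boundsys} with some new relative-error parameter $\tilde{\eta}^{[j]}<1$, after which Theorem \ref{thm:dembo-inexact} yields linear convergence directly.

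The core estimate is a triangle-inequality splitting of the Newton residual into a GMRES part and a truncation part:
\begin{equation}
\bigl\|{\bm J}_{\bm H}({\bm f}^{[j]})\tilde{\bm s}^{[j]}+{\bm H}({\bm f}^{[j]})\bigr\|
\leq \bigl\|{\bm J}_{\bm H}({\bm f}^{[j]})({\tilde{\bm s}^{[j]}-\bm s^{[j]}})\bigr\|
+ \bigl\|{\bm J}_{\bm H}({\bm f}^{[j]}){\bm s}^{[j]}+{\bm H}({\bm f}^{[j]})\bigr\|.
\end{equation}
The second term is controlled by the HT/TT-GMRES stopping criterion, which by hypothesis yields $\tfrac{1}{2}\|{\bm H}({\bm f}^{[j]})\|\eta^{[j]}$. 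For the first term I would use the identity $\tilde{\bm s}^{[j]}-{\bm s}^{[j]}=\mathfrak{T}_{\bm r}({\bm f}^{[j]}+{\bm s}^{[j]})-({\bm f}^{[j]}+{\bm s}^{[j]})$, submultiplicativity of the operator norm, and the hypothesis on the truncation rank ${\bm r}$ to obtain
\begin{equation}
\bigl\|{\bm J}_{\bm H}({\bm f}^{[j]})({\tilde{\bm s}^{[j]}-\bm s^{[j]}})\bigr\|
\leq \|{\bm J}_{\bm H}({\bm f}^{[j]})\|\cdot \frac{\|{\bm H}({\bm f}^{[j]})\|}{2\|{\bm J}_{\bm H}({\bm f}^{[j]})\|}\eta^{[j]}
= \tfrac{1}{2}\|{\bm H}({\bm f}^{[j]})\|\eta^{[j]}.
\end{equation}

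Adding the two halves gives exactly $\|{\bm H}({\bm f}^{[j]})\|\eta^{[j]}$, which shows that $\tilde{\bm s}^{[j]}$ satisfies the inexact Newton bound \eqref{boundsys} with the same parameter $\eta^{[j]}\in[0,1)$. Applying Theorem \ref{thm:dembo-inexact} to the iteration ${\bm f}^{[j+1]}={\bm f}^{[j]}+\tilde{\bm s}^{[j]}$ then delivers linear convergence on some neighborhood of ${\bm f}^{*}$, finishing the proof. The only delicate point is the factor of $1/2$ in both the GMRES stopping tolerance and the truncation bound: these are chosen precisely so that the two halves sum to a residual below $\|{\bm H}({\bm f}^{[j]})\|\eta^{[j]}$, which is the main bookkeeping obstacle but requires no new ideas. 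One should also note in passing that $\|{\bm J}_{\bm H}({\bm f}^{[j]})\|$ must be bounded away from zero on the Newton iterates, which is inherited from the invertibility assumption at ${\bm f}^{*}$ together with the continuity of the Jacobian on the convergence neighborhood supplied by Theorem \ref{thm:dembo-inexact}.
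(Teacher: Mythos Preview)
Your proposal is correct and follows essentially the same route as the paper's proof: both introduce the effective step $\tilde{\bm s}^{[j]}=\mathfrak{T}_{\bm r}({\bm f}^{[j]}+{\bm s}^{[j]})-{\bm f}^{[j]}$, split the inexact Newton residual via the triangle inequality into a GMRES contribution and a truncation contribution each bounded by $\tfrac{1}{2}\|{\bm H}({\bm f}^{[j]})\|\eta^{[j]}$, and then invoke Theorem~\ref{thm:dembo-inexact}. Your remark on the nondegeneracy of $\|{\bm J}_{\bm H}({\bm f}^{[j]})\|$ is a useful clarification that the paper leaves implicit.
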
	

\noindent

\begin{proof}
Let ${\bm f}^{[0]}$ be an initial guess.
Consider the sequence
\begin{align}
{\bm {\tilde f}}^{[j+1]} = {\bm {f}}^{[j]} + {\bm s}^{[j]}\qquad 
j=0,1,\ldots
\end{align}
where ${\bm s}^{[j]}$ is the HT/TT-GMRES solution of 
$\bm J_{\bm H}\left(\bm f^{[j]}\right){\bm s}^{[j]}=
-{\bm H}\left ({\bm f}^{[j]}\right )$ obtained with 
tolerance
\begin{equation}
\varepsilon_j < \frac{\eta^{[j]}}{2 m 
\left\| {\bm J}_{\bm H} \left( \bm f^{[j]} \right)\right\|
\left\| {\bm J}^{-1}_{\bm H}\left(\bm f^{[j]}\right)\right\|}, 
\end{equation}
and $0\leq\eta^{[j]}<1$. The next step is to 
truncate ${\bm {\tilde f}}^{[j+1]}$ to a tensor 
\begin{equation}
{\bm f}^{[j+1]} = {\mathfrak{T}_{\bm r}({\bm {\tilde f}}^{[j+1]})}
\end{equation}
with rank $\bm r$ chosen so that
\begin{equation}
\left \| {\bm f}^{[j+1]} - {\bm {\tilde f}}^{[j+1]}\right\|
<\frac{\left \|{\bm H}\left ({\bm f}^{[j]}
\right)\right\|}{2\left\|\bm J_{\bm H}\left ({\bm f}^{[j]}
\right)\right\|}\eta^{[j]}.
\end{equation}
Then the error in solving the Newton iteration this step is
\begin{equation}
{\bm {\tilde r}}^{[j]} =
{\bm J}_{\bm H}\left(\bm f^{[j]}\right)\left[
{\bm f}^{[j+1]} -{\bm{\tilde f}}^{[j+1]}
+ {\bm s}^{j}\right ]
+{\bm H}\left ({\bm f}^{[j]}
\right),
\end{equation}
which we constructed to satisfy the bound
\begin{equation}
\left \| {\bm {\tilde r}}^{[j]}
\right \|
\leq
\left \|
{{\bm J}_{\bm H}\left(\bm f^{[j]}\right)}{\bm s}^{[j]}
+{\bm H}\left ({\bm f}^{[j]}\right )
\right \|
+
\left \|
{\bm J}_{\bm H}\left(\bm f^{[j]}\right)\left [
{\bm f}^{[j+1]} -{\bm{\tilde f}}^{[j+1]}
\right]\right \|
<\left \|
{\bm H}\left ({\bm f}^{[j]}\right )
\right \|\eta^{[j]}.
\end{equation}
Hence, the inexact HT/TT-GMRES Newton method converges 
linearly. This completes the proof.
\begin{flushright}
\qed
\end{flushright}
\end{proof}

\subsubsection*{An Example}

\begin{figure}[t]
\centerline{ 
\includegraphics[scale=0.57]{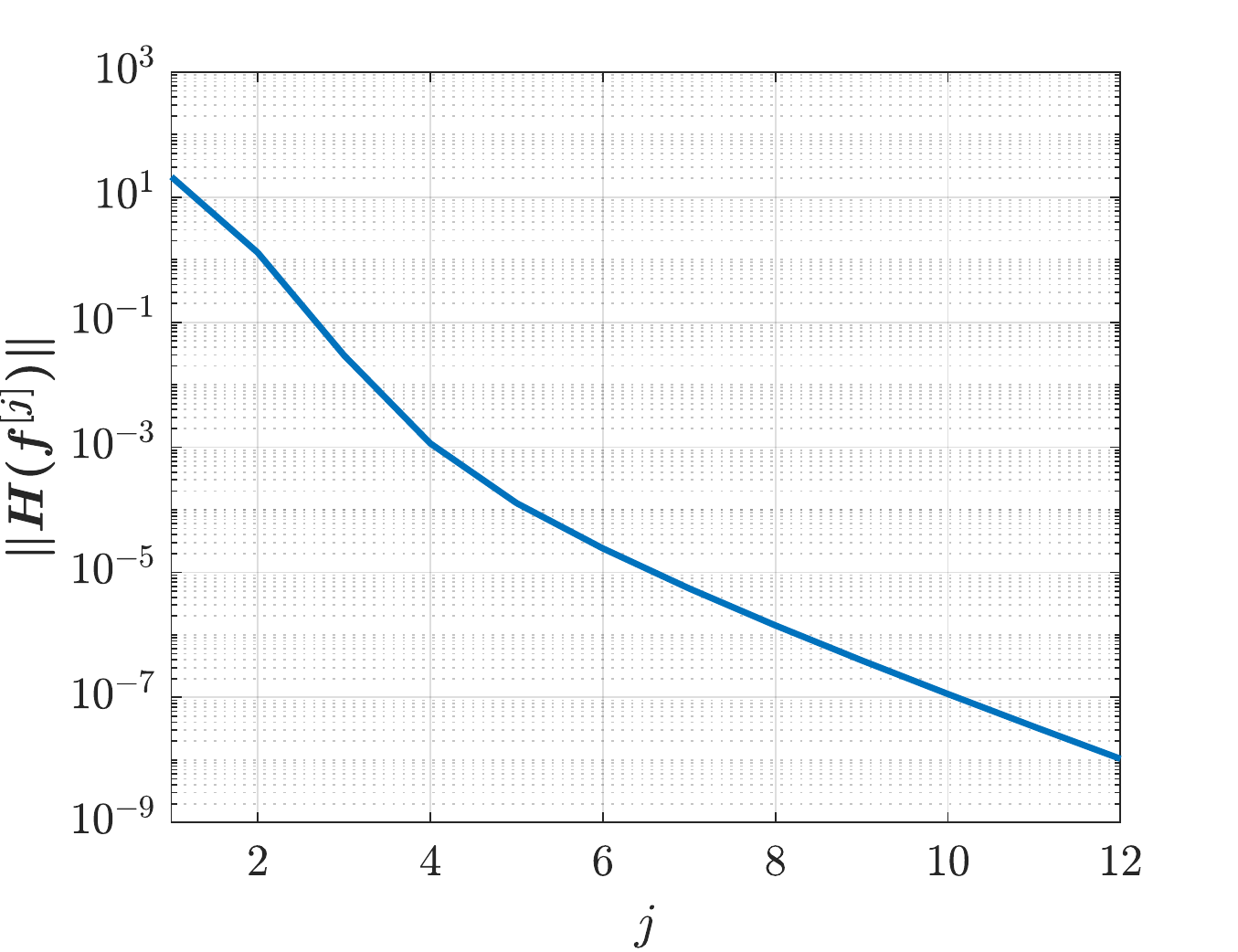}
}
\caption{
Error versus iteration count of inexact Newton's method
in the HT format.
}
\label{fig:ac-newton-example-err}
\end{figure}
\noindent
Let us provide a brief numerical demonstration of the inexact 
Newton method with HT/TT-GMRES iteration. To this
end, consider the cubic function
\begin{equation}
\bm H(\bm f)=  1.5{\bm f} + 0.5 {\bm f}_{0} + 0.125({\bm f} + {\bm f}_0)^3
\end{equation}
where  all  products are computed using the approximate
element-wise Hadamard tensor product with accuracies set 
to ${10}^{-12}$, and ${\bm f}_0$ is a given tensor which corresponds 
to the initial condition used in section \ref{sec:ac-eqn-example} 
truncated to an absolute tolerance of $10^{-4}$.
The Jacobian operator is easily obtained as 
\begin{equation}
{\bm J}_{\bm H}({\bm f}){\bm s}
= 1.5{\bm s} +0.375({\bm f} + {\bm f}_0)^2{\bm s}.
\end{equation}
We set the relative error of the matrix inverse to be $\eta = 10^{-3}$.
In Figure \ref{fig:ac-newton-example-err} we plot the results 
of the proposed inexact Newton method with HT/TT-GMRES 
iterations. We see that that the target tolerance 
of $2.2\times 10^{-8}$ is hit in just 12 iterations.

\bibliography{ht-implicit}

\begin{thebibliography}{10}

\bibitem{AllenCahn1972}
S.~M. Allen and J.~W. Cahn.
\newblock Ground state structures in ordered binary alloys with second neighbor
  interactions.
\newblock {\em Acta Metall.}, 20(3):423--433, 1972.

\bibitem{AllenCahn1973}
S.~M. Allen and J.~W. Cahn.
\newblock A correction to the ground state of {FCC} binary ordered alloys with
  first and second neighbor pairwise interactions.
\newblock {\em Scripta Metallurgica}, 7(12):1261--1264, 1973.

\bibitem{AuBaKo16}
W.~Austin, G.~Ballard, and T.~G. Kolda.
\newblock Parallel tensor compression for large-scale scientific data.
\newblock In {\em IPDPS'16: Proceedings of the 30th IEEE International Parallel
  and Distributed Processing Symposium}, pages 912--922, May 2016.

\bibitem{Bachmayr}
M.~Bachmayr, R.~Schneider, and A.~Uschmajew.
\newblock Tensor networks and hierarchical tensors for the solution of
  high-dimensional partial differential equations.
\newblock {\em Found. of Comput. Math.}, 16(6), 2016.

\bibitem{Baldeaux}
J.~Baldeaux and M.~Gnewuch.
\newblock Optimal randomized multilevel algorithms for infinite-dimensional
  integration on function spaces with {ANOVA}-type decomposition.
\newblock {\em SIAM J. Numer. Anal.}, 52(3):1128--1155, 2014.

\bibitem{Barthelmann}
V.~Barthelmann, E.~Novak, and K.~Ritter.
\newblock High dimensional polynomial interpolation on sparse grids.
\newblock {\em Advances in Computational Mechanics}, 12:273--288, 2000.

\bibitem{Beran}
M.~J. Beran.
\newblock {\em Statistical continuum theories}.
\newblock New York: Interscience Publishers, 1968.

\bibitem{parr_tensor}
A.~M.~P. Boelens, D.~Venturi, and D.~M. Tartakovsky.
\newblock Parallel tensor methods for high-dimensional linear {PDE}s.
\newblock {\em J. Comput. Phys.}, 375:519--539, 2018.

\bibitem{BoltzmannBGK2020}
A.~M.~P. Boelens, D.~Venturi, and D.~M. Tartakovsky.
\newblock Tensor methods for the {B}oltzmann-{BGK} equation.
\newblock {\em J. Comput. Phys.}, 421:109744, 2020.

\bibitem{Bungartz}
H.~J. Bungartz and M.~Griebel.
\newblock Sparse grids.
\newblock {\em Acta Numerica}, 13:147--269, 2004.

\bibitem{CaoCG09}
Y.~Cao, Z.~Chen, and M.~Gunzbuger.
\newblock {ANOVA} expansions and efficient sampling methods for parameter
  dependent nonlinear {PDEs}.
\newblock {\em Int. J. Numer. Anal. Model.}, 6:256--273, 2009.

\bibitem{chen2020deephf}
Y.~Chen, L.~Zhang, H.~Wang, and W.~E.
\newblock Ground state energy functional with {H}artree-{F}ock efficiency and
  chemical accuracy.
\newblock {\em J. Phys. Chem. A}, 124(35), 2020.

\bibitem{cho2016}
H.~Cho, D.~Venturi, and G.E. Karniadakis.
\newblock Numerical methods for high-dimensional probability density function
  equations.
\newblock {\em J. Comput. Phys.}, 305:817--837, January 2016.

\bibitem{daas2020parallel}
H.~Al Daas, G.~Ballard, and P.~Benner.
\newblock Parallel algorithms for tensor train arithmetic.
\newblock {\em SIAM J. Sci. Comput.}, 44(1):C25--C53, 2022.

\bibitem{dektor2020dynamically}
A.~Dektor and D.~Venturi.
\newblock Dynamically orthogonal tensor methods for high-dimensional nonlinear
  {PDE}s.
\newblock {\em J. Comput. Phys.}, 404:109125, 2020.

\bibitem{Alec2020}
A.~Dektor and D.~Venturi.
\newblock {Dynamic tensor approximation of high-dimensional nonlinear PDEs}.
\newblock {\em J. Comput. Phys.}, 437:110295, 2021.

\bibitem{dembo1982inexact}
Ron~S. Dembo, Stanley~C. Eisenstat, and Trond Steihaug.
\newblock Inexact newton methods.
\newblock {\em SIAM Journal on Numerical Analysis}, 19(2):400--408, 1982.

\bibitem{dimarco2014}
G.~Di~Marco and L.~Pareschi.
\newblock Numerical methods for kinetic equations.
\newblock {\em Acta Numerica}, 23:369--520, May 2014.

\bibitem{dolgov2012fast}
S.~Dolgov, B.~Khoromskij, and I.~Oseledets.
\newblock Fast solution of parabolic problems in the tensor train/quantized
  tensor train format with initial application to the {F}okker--{P}lanck
  equation.
\newblock {\em SIAM J. Sci. Comput.}, 34(6):A3016--A3038, 2012.

\bibitem{dolgov2013ttgmres}
S.~V. Dolgov.
\newblock {TT-GMRES:} solution to a linear system in the structured tensor
  format.
\newblock {\em Russian Journal of Numerical Analysis and Mathematical
  Modelling}, 28(2):149--172, 2013.

\bibitem{Weinan2019}
W.~E, J.~Han, and Q.~Li.
\newblock A mean-field optimal control formulation of deep learning.
\newblock {\em Res. Math. Sci.}, 6(10):1--41, 2019.

\bibitem{farago2009operator}
Istv{\'a}n Farag{\'o} and Agnes Havasiy.
\newblock {\em Operator splittings and their applications}.
\newblock Nova Science Publishers, 2009.

\bibitem{Foo1}
J.~Foo and G.~E. Karniadakis.
\newblock Multi-element probabilistic collocation method in high dimensions.
\newblock {\em J. Comput. Phys.}, 229:1536--1557, 2010.

\bibitem{grasedyck2010hierarchical}
L.~Grasedyck.
\newblock Hierarchical singular value decomposition of tensors.
\newblock {\em SIAM J. Matrix Anal. Appl.}, 31(4):2029--2054, 2010.

\bibitem{grasedyck2018distributed}
L.~Grasedyck and C.~L\"{o}bbert.
\newblock Distributed hierarchical {SVD} in the hierarchical {T}ucker format.
\newblock {\em Numer. Linear Algebra Appl.}, 25(6):e2174, 2018.

\bibitem{hairer2006geometric}
E.~Hairer, C.~Lubich, and G.~Wanner.
\newblock {\em Geometric numerical integration}, volume~31 of {\em Springer
  Series in Computational Mathematics}.
\newblock Springer-Verlag, Berlin, second edition, 2006.
\newblock Structure-preserving algorithms for ordinary differential equations.

\bibitem{spectral}
J.~S. Hesthaven, S.~Gottlieb, and D.~Gottlieb.
\newblock {\em Spectral methods for time-dependent problems}.
\newblock Cambridge University Press, 2007.

\bibitem{Hopf}
E.~Hopf.
\newblock Statistical hydromechanics and functional calculus.
\newblock {\em J. Rat. Mech. Anal.}, 1(1):87--123, 1952.

\bibitem{Trefethen2005}
A.-K. Kassam and L.~N. Trefethen.
\newblock Fourth-order time stepping for stiff {PDE}s.
\newblock {\em SIAM J. Sci. Comput.}, 26(4):1214--1233, 2005.

\bibitem{khoromskij}
B.~N. Khoromskij.
\newblock Tensor numerical methods for multidimensional {PDE}s: theoretical
  analysis and initial applications.
\newblock In {\em C{EMRACS} 2013--modelling and simulation of complex systems:
  stochastic and deterministic approaches}, volume~48 of {\em ESAIM Proc.
  Surveys}, pages 1--28. EDP Sci., Les Ulis, 2015.

\bibitem{Vandereycken_2019}
E.~Kieri and B.~Vandereycken.
\newblock Projection methods for dynamical low-rank approximation of
  high-dimensional problems.
\newblock {\em Comput. Methods Appl. Math.}, 19(1):73--92, 2019.

\bibitem{kressner2014algorithm}
D.~Kressner and C.~Tobler.
\newblock Algorithm 941: {htucker} -- a {M}atlab toolbox for tensors in
  hierarchical {T}ucker format.
\newblock {\em ACM Transactions on Mathematical Software}, 40(3):1--22, 2014.

\bibitem{Li1}
G.~Li and H.~Rabitz.
\newblock Regularized random-sampling high dimensional model representation
  {(RS-HDMR)}.
\newblock {\em J. Math. Chem.}, 43(3):1207--1232, 2008.

\bibitem{Jingmei2022}
L.~Li, J.~Qiu, and G.~Russo.
\newblock A high order semi-{L}agrangian finite difference method for nonlinear
  {V}lasov and {BGK} models.
\newblock {\em Comm. Appl. Math. Comput.}, 5:170--198, 2023.

\bibitem{montanelli2018fourth}
H.~Montanelli and Y.~Nakatsukasa.
\newblock Fourth-order time-stepping for stiff {PDE}s on the sphere.
\newblock {\em SIAM J. Sci. Comput.}, 40(1):A421--A451, 2018.

\bibitem{Akil}
A.~Narayan and J.~Jakeman.
\newblock Adaptive {L}eja sparse grid constructions for stochastic collocation
  and high-dimensional approximation.
\newblock {\em SIAM J. Sci. Comput.}, 36(6):A2952--A2983, 2014.

\bibitem{Raissi}
M.~Raissi and G.~E. Karniadakis.
\newblock Hidden physics models: Machine learning of nonlinear partial
  differential equations.
\newblock {\em J. Comput. Phys.}, 357:125--141, 2018.

\bibitem{Raissi1}
M.~Raissi, P.~Perdikaris, and G.~E. Karniadakis.
\newblock Physics-informed neural networks: A deep learning framework for
  solving forward and inverse problems involving nonlinear partial differential
  equations.
\newblock {\em J. Comput. Phys.}, 378:606--707, 2019.

\bibitem{risken1989}
H.~Risken.
\newblock {\em The {F}okker-{P}lanck equation: methods of solution and
  applications}.
\newblock Springer-Verlag, second edition, 1989.
\newblock Mathematics in science and engineering, vol. 60.

\bibitem{rodgers2020adaptive}
A.~Rodgers, A.~Dektor, and D.~Venturi.
\newblock Adaptive integration of nonlinear evolution equations on tensor
  manifolds.
\newblock {\em J. Sci. Comput.}, 92(39):1--31, 2022.

\bibitem{rodgers2020stability}
A.~Rodgers and D.~Venturi.
\newblock Stability analysis of hierarchical tensor methods for time-dependent
  {PDEs}.
\newblock {\em J. Comput. Phys.}, 409:109341, 2020.

\bibitem{pdefind}
S.~H. Rudy, S.~L. Brunton, J.~L. Proctor, and J.~N. Kutz.
\newblock Data-driven discovery of partial differential equations.
\newblock {\em Science Adv.}, 3(4):e1602614, 2017.

\bibitem{townsend}
T.~Shi, M.~Ruth, and A.~Townsend.
\newblock Parallel algorithms for computing the tensor-train decomposition.
\newblock {\em ArXiv}, (2111.10448):1--23, 2021.

\bibitem{PhysRevLett.86.2353}
A.~Trombettoni and A.~Smerzi.
\newblock Discrete solitons and breathers with dilute {Bose-Einstein}
  condensates.
\newblock {\em Phys. Rev. Lett.}, 86:2353--2356, 2001.

\bibitem{uschmajew2012}
A.~Uschmajew.
\newblock Local convergence of the alternating least squares algorithm for
  canonical tensor approximation.
\newblock {\em SIAM J. Matrix Anal. Appl.}, 33(2):639--652, 2012.

\bibitem{uschmajew2013geometry}
A.~Uschmajew and B.~Vandereycken.
\newblock The geometry of algorithms using hierarchical tensors.
\newblock {\em Linear Algebra Appl.}, 439(1):133--166, 2013.

\bibitem{venturi2018}
D.~Venturi.
\newblock The numerical approximation of nonlinear functionals and functional
  differential equations.
\newblock {\em Phys. Reports}, 732:1--102, 2018.

\bibitem{VenturiSpectral}
D.~Venturi and A.~Dektor.
\newblock Spectral methods for nonlinear functionals and functional
  differential equations.
\newblock {\em Research in the Mathematical Sciences}, 8(27):1--39, 2021.

\bibitem{Zhu2019}
Y.~Zhu, N.~Zabaras, P.-S. Koutsourelakis, and P.~Perdikaris.
\newblock Physics-constrained deep learning for high-dimensional surrogate
  modeling and uncertainty quantification without labeled data.
\newblock {\em J. Comput. Phys.}, 394:56--81, 2019.

\bibitem{Justin}
J.~Zinn-Justin.
\newblock {\em Quantum field theory and critical phenomena}.
\newblock Oxford Univ. Press, fourth edition, 2002.

\end{thebibliography}
\bibliographystyle{plain}
\end{document}